\RequirePackage{fix-cm}
\documentclass{amsart} 
\usepackage[utf8]{inputenc}
\usepackage{mathptmx}
\usepackage{latexsym}
\usepackage{amsmath}
\usepackage{amssymb}
\usepackage{amsfonts} 
\usepackage{eucal} 
\usepackage{amsbsy}
\usepackage{amsthm}
\usepackage[all]{xy}
\usepackage{hyperref}
\usepackage{cleveref}
\usepackage[boxsize=.8em]{ytableau}
\usepackage{graphicx}
\usepackage{framed}
\usepackage{tikz}

\textwidth13cm
\oddsidemargin1.3cm
\evensidemargin1.3cm

\newtheorem{thm}{Theorem}[section]
\newtheorem*{thm*}{Theorem}
\newtheorem{lemma}[thm]{Lemma}
\newtheorem{prop}[thm]{Proposition}
\newtheorem{cor}[thm]{Corollary}
\newtheorem*{cor*}{Corollary}

\theoremstyle{definition}
\newtheorem{defn}[thm]{Definition}
\newtheorem{example}[thm]{Example}

\theoremstyle{remark}
\newtheorem{remark}[thm]{Remark}

\newcommand {\intg}  {\ensuremath{\mathbb{Z}}}

\newcommand {\cplx}  {\ensuremath{\mathbb{C}}}
\newcommand {\rat}   {\ensuremath{\mathbb{Q}}}

\newcommand {\codim} {\ensuremath{\operatorname{codim}}}

\newcommand {\BM}   {{\ensuremath{\operatorname{BM}}}}


\newcommand {\cl}   {{\ensuremath{\operatorname{cl}}}}
\newcommand {\Sing}    {\ensuremath{\operatorname{Sing}}}



\makeatletter
\@namedef{subjclassname@2020}{%
  \textup{2020} Mathematics Subject Classification}
\makeatother

\begin{document}


\title{The Uniqueness Theorem for Gysin Coherent Characteristic Classes of Singular Spaces}

\author{Markus Banagl}

\address{Mathematisches Institut, Universit\"{a}t Heidelberg,
  Im Neuenheimer Feld 205, 69120 Heidelberg, Germany}

\email{banagl@mathi.uni-heidelberg.de}

\author{Dominik J. Wrazidlo}

\address{Mathematisches Institut, Universit\"{a}t Heidelberg,
  Im Neuenheimer Feld 205, 69120 Heidelberg, Germany}

\email{dwrazidlo@mathi.uni-heidelberg.de}

\thanks{This work is funded in part by the Deutsche Forschungsgemeinschaft (DFG, German Research Foundation) through a research grant to the first author (Projektnummer 495696766).
The second author was supported in addition by a Feodor Lynen Return Fellowship of the Alexander von Humboldt Foundation.}

\date{\today}

\subjclass[2020]{57R20, 55R12, 55N33, 57N80, 32S60, 32S20, 14M15, 14C17, 57R40, 32S50}

\keywords{Gysin transfer, Characteristic Classes, Singularities, Stratified Spaces, Intersection Homology, Goresky-MacPherson $L$-class, Verdier-Riemann-Roch formulae, Schubert varieties, Intersection theory, Transversality}


\begin{abstract}
We establish a general computational scheme designed for a systematic computation of characteristic classes of singular complex algebraic varieties that satisfy a Gysin axiom in a transverse setup.
This scheme is explicitly geometric and of a recursive nature terminating on genera of explicit characteristic subvarieties that we construct.
It enables us e.g. to apply intersection theory of Schubert varieties to obtain a uniqueness result for such characteristic classes in the homology of an ambient Grassmannian.
Our framework applies in particular to the Goresky-MacPherson $L$-class by virtue of the Gysin restriction formula obtained by the first author in previous work.
We illustrate our approach for a systematic computation of the $L$-class in terms of normally nonsingular expansions in examples of singular Schubert varieties that do not satisfy Poincar\'{e} duality over the rationals.
\end{abstract}

\maketitle


\tableofcontents


\section{Introduction}
We establish a general computational scheme that allows for a systematic computation of characteristic classes of singular complex algebraic varieties that satisfy a Gysin axiom.
This scheme is explicitly geometric and of a recursive nature terminating on genera of explicit characteristic subvarieties that we construct.

We introduce and investigate the notion of \emph{Gysin coherent characteristic classes} $c\ell$ defined for inclusions of certain subvarieties in ambient smooth complex algebraic varieties (here, varieties are understood to be pure-dimensional complex and quasiprojective).
In \Cref{definition gysin coherent characteristic classes}, we define such a class to be a pair $c\ell = (c\ell^{\ast}, c\ell_{\ast})$ consisting of a function $c\ell^{\ast}$ that assigns to every inclusion $f \colon M \rightarrow W$ of a smooth closed subvariety $M \subset W$ in a smooth variety $W$ a normalized element $c\ell^{\ast}(f) \in H^{\ast}(M; \mathbb{Q})$, and a function $c\ell_{\ast}$ that assigns to every inclusion $i \colon X \rightarrow W$ of a compact possibly singular subvariety $X \subset W$ in a smooth variety $W$ an element $c\ell_{\ast}(i) \in H_{\ast}(W; \mathbb{Q})$ whose highest non-trivial homogeneous component is the ambient fundamental class of $X$ in $W$ such that the following axioms hold.
Apart from the Gysin restriction formula $f^{!} c\ell_{\ast}(i) = c\ell^{\ast}(f) \cap c\ell_{\ast}(M \pitchfork X \subset M)$ in a transverse setup, we also require that $c\ell_{\ast}$ is multiplicative under products, that $c\ell^{\ast}$ and $c\ell_{\ast}$ transform naturally under isomorphisms of ambient smooth varieties, and that $c\ell_{\ast}$ is natural with respect to inclusions in larger ambient smooth varieties.
The genus $|c\ell_{\ast}|$ associated to a Gysin coherent characteristic class $c\ell$ is defined as the composition of $c \ell_{\ast}$ with the homological augmentation, $|c\ell_{\ast}| = \varepsilon_{\ast} c \ell_{\ast} \in \mathbb{Q}$.

Often, such classes $c \ell$ arise from generalizations $c_{\ast}$ to singular varieties of bundle theoretic cohomological classes $c^{\ast}$.
In such a situation, the pair $c \ell$ is of the form $c \ell^{\ast}(f) = c^{\ast}(\nu_{f})$, where $\nu_{f}$ is the normal bundle of the smooth embedding $f$, and $c \ell_{\ast}(i) = i_{\ast} c_{\ast}(X)$ for inclusions $i$ of compact possibly singular subvarieties $X$ in ambient smooth varieties.
By virtue of the Verdier-Riemann-Roch type formulae derived by the first author in \cite{banagllgysin}, our framework applies in particular to the topological characteristic class $c_{\ast} = L_{\ast}$ of Goresky and MacPherson \cite{gmih1} that generalizes the cohomological Hirzebruch class $c^{\ast} = L^{\ast}$ \cite{hirzebruch} to singular spaces (see \Cref{proposition l class is l type characteristic class}).
In this case, the associated genus is the signature $\sigma(X)$ of the Goresky-MacPherson-Siegel intersection form on middle-perversity intersection homology of the Witt space $X$.

By its very definition, the Goresky-MacPherson $L$-class is uniquely determined by signature normalization in degree zero, and by compatibility with Gysin restriction associated to normally nonsingular topological embeddings of topological singular spaces with \emph{trivial} normal bundle (see Cappell-Shaneson \cite{cs}).
Uniqueness can be shown by a Thom-Pontrjagin type approach via transverse regular maps to spheres.
On the other hand, by dropping the triviality assumption for normal bundles, the Gysin axiom introduced in the present paper seems especially well-suited for concrete computations in transverse situations within the realm of complex algebraic geometry.
However, the Thom-Pontrjagin method is usually not directly applicable when ranging only over algebraic varieties rather than all topological Witt spaces.
(Regular level sets of PL representatives of homotopy classes of maps from a variety to a sphere are not subvarieties in general.)
Our main result is the following uniqueness theorem for Gysin coherent characteristic classes of singular varieties embedded in Grassmannians.

\begin{thm}[Uniqueness Theorem]\label{main result on Gysin coherent characteristic classes}
Let $c\ell$ and $\widetilde{c\ell}$ be Gysin coherent characteristic classes.
If $c\ell^{\ast} = \widetilde{c\ell}^{\ast}$ and $|c\ell_{\ast}| = |\widetilde{c\ell}_{\ast}|$ for the associated genera, then we have $c\ell_{\ast}(i) = \widetilde{c\ell}_{\ast}(i)$ for all inclusions $i \colon X \rightarrow G$ of compact irreducible subvarieties in ambient Grassmannians.
\end{thm}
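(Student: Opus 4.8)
The plan is to prove, by induction on $d=\dim_{\mathbb C}X$, the equality $c\ell_\ast(i)=\widetilde{c\ell}_\ast(i)$ for all compact irreducible subvarieties of any ambient $W$ in the class $\mathcal W$ built from Grassmannians by repeatedly passing to smooth Schubert subvarieties (which are Grassmann bundles over Grassmannians), to products, and to generic smooth linear sections. Enlarging the class of ambients is forced on us because the inductive step cuts $X$ down by intersecting with smooth subvarieties of $W$, so we need the ambients to form a class closed under that operation; crucially, every $W\in\mathcal W$ has rational homology concentrated in even degrees, and $H^\ast(W;\mathbb Q)$ is spanned by classes of smooth subvarieties that again lie in $\mathcal W$. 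Write $e(i):=c\ell_\ast(i)-\widetilde{c\ell}_\ast(i)\in H_\ast(W;\mathbb Q)$; we must show $e(i)=0$.

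If $d=0$, then $H_0(W;\mathbb Q)$ carries only the ambient fundamental class, so the normalization axiom gives $c\ell_\ast(i)=[X]=\widetilde{c\ell}_\ast(i)$. If $d=1$, evenness of $H_\ast(W;\mathbb Q)$ forces $c\ell_\ast(i)=[X]+|c\ell_\ast|(X)\,[\mathrm{pt}]\in H_2(W;\mathbb Q)\oplus H_0(W;\mathbb Q)$, the top piece by normalization and the bottom piece by the definition of the genus; the hypotheses $c\ell^\ast=\widetilde{c\ell}^\ast$ and $|c\ell_\ast|=|\widetilde{c\ell}_\ast|$ then give $e(i)=0$.

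Let $d\ge 2$, and assume the statement in dimensions $<d$. By evenness we may write $c\ell_\ast(i)=[X]+\beta_{2d-2}+\dots+\beta_{2}+|c\ell_\ast|(X)[\mathrm{pt}]$ with $\beta_{2m}\in H_{2m}(W;\mathbb Q)$, and the top and bottom pieces already agree with those of $\widetilde{c\ell}_\ast(i)$, so it suffices to show $\beta_{2c}=\widetilde\beta_{2c}$ for each $c\in\{1,\dots,d-1\}$. Fix such a $c$ and a smooth codimension-$c$ subvariety $M\subset W$ with $M\in\mathcal W$. A Kleiman-type moving lemma --- starting from the transitive group action on the Grassmannian, propagated through the bundle/product/section constructions, and applied relative to a fixed Whitney stratification of $X$ --- replaces $M$ by a translate transverse to $X$; then $M\pitchfork X$ is compact of pure dimension $d-c<d$, and after a further generic choice it is irreducible (a Bertini-type irreducibility statement, available since $d-c\ge 1$), or at worst a transverse union of irreducible components to which the inductive hypothesis still applies once one has extracted from the axioms the additivity of $c\ell_\ast$ over such components. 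By the Gysin axiom, with $f\colon M\hookrightarrow W$,
\[
f^{!}c\ell_\ast(i)=c\ell^{\ast}(f)\cap c\ell_\ast(M\pitchfork X\subset M),
\]
and likewise for $\widetilde{c\ell}$; since $c\ell^{\ast}=\widetilde{c\ell}^{\ast}$ and $c\ell_\ast(M\pitchfork X\subset M)=\widetilde{c\ell}_\ast(M\pitchfork X\subset M)$ by the inductive hypothesis, we get $f^{!}e(i)=0$. Reading off the $H_0(M;\mathbb Q)$-component: on one side it equals $(\beta_{2c}\cdot[M]_W)[\mathrm{pt}]$ --- the unique contribution, as $c\le d-1$ annihilates the $f^{!}[X]$-term and $f^{!}\beta_{2m}$ reaches degree zero only for $m=c$ --- while on the other side it equals $\varepsilon_\ast\!\big(c\ell^{\ast}(f)\cap c\ell_\ast(M\pitchfork X\subset M)\big)$, a characteristic number assembled purely from the common cohomology class $c\ell^{\ast}(f)$ and the common class $c\ell_\ast(M\pitchfork X\subset M)$. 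Hence $\beta_{2c}\cdot[M]_W=\widetilde\beta_{2c}\cdot[M]_W$. Letting $M$ run over enough smooth codimension-$c$ subvarieties inside $\mathcal W$ whose classes span $H^{2c}(W;\mathbb Q)$ --- this is where the Schubert calculus of $W$ enters, sub-Grassmannians, their linear sections, Segre products, and $\mathbb Q$-combinations thereof supplying the spanning families in general position --- the perfect Poincar\'e pairing gives $\beta_{2c}=\widetilde\beta_{2c}$. Thus $e(i)=0$, and specializing $W$ to a Grassmannian proves the theorem.

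Unravelling the recursion exhibits $c\ell_\ast(i)$ as a universal expression in three ingredients that coincide for $c\ell$ and $\widetilde{c\ell}$ under the hypotheses: the Schubert-combinatorial placement of $[X]$ and of the iterated transverse intersections in the relevant (co)homology; the cohomology classes $c\ell^{\ast}$ of the normal bundles of the cutting subvarieties; and the genera $|c\ell_\ast|$ of the ``characteristic subvarieties'' $M_k\pitchfork\dots\pitchfork M_1\pitchfork X$ carried down to dimension zero. The main obstacle is the geometric input in the inductive step: proving the moving lemma in the \emph{stratified}-transversality form the Gysin axiom requires; exhibiting, in each codimension, enough smooth subvarieties of a Grassmannian that remain inside a recursion-closed class while jointly detecting the rational homology; and controlling irreducibility and reducedness of the cut-down loci (together with the additivity of $c\ell_\ast$ when components split off) so that the scheme provably terminates on genera of explicit subvarieties.
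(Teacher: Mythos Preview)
Your inductive scheme has the right shape, but the step that carries all the weight---``letting $M$ run over enough smooth codimension-$c$ subvarieties inside $\mathcal{W}$ whose classes span $H^{2c}(W;\mathbb{Q})$''---does not hold for the class $\mathcal{W}$ you describe, and this is precisely the obstacle the paper's machinery is built to overcome. Take $W = G_3(\mathbb{C}^6)$ and $c=2$: the space $H^4(W;\mathbb{Q})$ is two-dimensional, but the smooth Schubert subvarieties of $W$ are exactly those with rectangular Young diagrams (\Cref{thm.lakwey}), and there is no rectangular partition in $\mathcal{P}(3,3)$ of size $7$; Segre images $G_{k'}(\mathbb{C}^{n'})\times G_{k''}(\mathbb{C}^{n''})\hookrightarrow W$ never hit codimension $2$ either; and generic codimension-$2$ linear sections all carry the single class $\sigma_1^2 = \sigma_2+\sigma_{1,1}$. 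So your detecting family produces at most a one-dimensional subspace of $H^4(W;\mathbb{Q})$, and the pairing argument cannot separate $\beta_4$ from $\widetilde\beta_4$. The same phenomenon recurs throughout: smooth Schubert varieties are far too sparse to span, and complete intersections of hyperplanes give only powers of $\sigma_1$.

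The paper resolves this not by finding more smooth closed subvarieties of $G'$, but by leaving $G'$ altogether. Given $i'\colon X'\hookrightarrow G'$ and a target partition $a'$, one manufactures an auxiliary Schubert variety $X_{a''}\subset G''$ in a \emph{strictly smaller} Grassmannian and Segre-embeds $X'\times X_{a''}$ into a \emph{larger} Grassmannian $G$; there one intersects with the regular locus $M$ of a \emph{singular} Schubert variety $X_c$ with L-shaped diagram. The point of the Segre construction is that the image $\Sigma(X'\times X_{a''})$ provably avoids the singular set of $X_c$ (\Cref{proposition segre avoids singular locus}), so one may apply the Gysin axiom with the noncompact $M\subset W=G\setminus\operatorname{Sing}X_c$, and the triple-intersection computation of \Cref{theorem triple intersection} then shows that this particular choice of $c$ and $a''$ isolates the single coefficient $\lambda^{a'}_{i'}$ via a Kronecker delta. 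The resulting recursion (\Cref{main theorem normally nonsingular expansion}) feeds on coefficients of $i''$ (known by the outer induction on $\dim G$, since $\dim G''<\dim G'$) and on higher-degree coefficients of $i'$ (known by an inner induction on $l=d'-|a'|$), and terminates on genera $|c\ell_\ast|$ of explicit characteristic subvarieties and on integrals depending only on $c\ell^\ast$. In particular the paper never needs $c\ell_\ast$ of the transverse intersection itself---only its genus---so the irreducibility and additivity issues you flag do not arise. Your approach, by contrast, inducts on $\dim X$ and needs the full class $c\ell_\ast(M\pitchfork X\subset M)$ at each stage, compounding the difficulty.
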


Since the inclusion of Schubert subvarieties in Grassmannians induces an injective map on homology with rational coefficients, we obtain

\begin{cor}\label{corollary Gysin coherent characteristic classes of Schubert varieties}
Let $c_{\ast}$ and $\widetilde{c}_{\ast}$ be generalizations to singular varieties of a bundle theoretic cohomological class $c^{\ast}$ such that $|c_{\ast}| = |\widetilde{c}_{\ast}|$ for the associated genera.
If $c_{\ast}$ and $\widetilde{c}_{\ast}$ induce Gysin coherent characteristic classes as explained above, then we have $c_{\ast}(X) = \widetilde{c}_{\ast}(X)$ for all Schubert varieties $X$.
\end{cor}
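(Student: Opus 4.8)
The corollary is a formal consequence of the Uniqueness Theorem \Cref{main result on Gysin coherent characteristic classes}, so the plan is to check that its hypotheses are satisfied for the induced pairs and then transport the conclusion along the inclusion of a Schubert variety into its ambient Grassmannian. I would not expect to need any new idea beyond what already appears in the statement of the theorem; the whole point is that the difficulty has been concentrated there.

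First I would unwind what it means for $c_{\ast}$ and $\widetilde{c}_{\ast}$ to induce Gysin coherent characteristic classes $c\ell$ and $\widetilde{c\ell}$. By construction $c\ell^{\ast}(f) = c^{\ast}(\nu_{f}) = \widetilde{c\ell}^{\ast}(f)$ for every smooth embedding $f$, because both cohomological ingredients are obtained by evaluating the \emph{same} bundle theoretic class $c^{\ast}$ on the normal bundle $\nu_{f}$; hence $c\ell^{\ast} = \widetilde{c\ell}^{\ast}$, which is exactly the input that makes the theorem applicable. Moreover, for an inclusion $i \colon X \rightarrow W$ one has $c\ell_{\ast}(i) = i_{\ast} c_{\ast}(X)$, and since the homological augmentation is natural (the composite $X \rightarrow W \rightarrow \mathrm{pt}$ equals $X \rightarrow \mathrm{pt}$), the genus $|c\ell_{\ast}| = \varepsilon_{\ast} i_{\ast} c_{\ast}(X) = \varepsilon_{\ast} c_{\ast}(X)$ agrees with $|c_{\ast}|$, and likewise $|\widetilde{c\ell}_{\ast}| = |\widetilde{c}_{\ast}|$. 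The hypothesis $|c_{\ast}| = |\widetilde{c}_{\ast}|$ therefore yields $|c\ell_{\ast}| = |\widetilde{c\ell}_{\ast}|$, so all hypotheses of \Cref{main result on Gysin coherent characteristic classes} hold.

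Next I would apply the theorem to Schubert varieties. A Schubert variety $X$ is, by definition, a closed irreducible subvariety of a Grassmannian $G$, and it is compact since $G$ is projective; write $i \colon X \rightarrow G$ for the inclusion. The Uniqueness Theorem gives $i_{\ast} c_{\ast}(X) = c\ell_{\ast}(i) = \widetilde{c\ell}_{\ast}(i) = i_{\ast} \widetilde{c}_{\ast}(X)$ in $H_{\ast}(G; \mathbb{Q})$. Finally I would invoke the injectivity of $i_{\ast} \colon H_{\ast}(X; \mathbb{Q}) \rightarrow H_{\ast}(G; \mathbb{Q})$ recalled in the introduction: both $X$ and $G$ are paved by affine spaces, the Schubert cells of $G$ restricting to a cell decomposition of $X$, so the rational homology of each is freely generated by the fundamental classes of the cell closures, and $i_{\ast}$ carries the basis of $H_{\ast}(X; \mathbb{Q})$ bijectively onto a subset of the basis of $H_{\ast}(G; \mathbb{Q})$; in particular it is injective. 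Cancelling $i_{\ast}$ gives $c_{\ast}(X) = \widetilde{c}_{\ast}(X)$. There is no genuine obstacle in this argument — the only non-formal ingredient is the classical fact that Schubert classes form a basis of $H_{\ast}(G; \mathbb{Q})$ — and the single point deserving a line of care is the observation, made above, that $c\ell^{\ast}$ and $\widetilde{c\ell}^{\ast}$ literally coincide because they are built from one and the same $c^{\ast}$.
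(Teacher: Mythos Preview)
Your proof is correct and follows precisely the paper's approach: the paper states the corollary immediately after the Uniqueness Theorem with the single justifying sentence that the inclusion of Schubert subvarieties in Grassmannians induces an injective map on rational homology, and you have simply made explicit the verification that the hypotheses of the theorem are met (same $c^{\ast}$ gives $c\ell^{\ast}=\widetilde{c\ell}^{\ast}$, naturality of augmentation gives equality of genera) before invoking that injectivity.
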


The reader can directly verify our \Cref{main result on Gysin coherent characteristic classes} for the toy example of inclusions into ambient projective spaces by applying the Gysin axiom inductively to intersections of subvarieties with generic hyperplanes.
To provide enough flexibility for applications, we state \Cref{main result on Gysin coherent characteristic classes} in a slightly more general form (see \Cref{main result on Gysin coherent characteristic classes with respect to x}) that accounts for a fixed family $\mathcal{X}$ of admissible inclusions $i \colon X \rightarrow W$ which satisfy an analog of the Kleiman-Bertini transversality theorem for an appropriate notion of transversality.
In \Cref{proof of main theorem}, we derive \Cref{main result on Gysin coherent characteristic classes with respect to x} by induction on the dimension of the ambient Grassmannian from a more technical result (see \Cref{main theorem normally nonsingular expansion}) that exploits the intersection theory of Schubert cycles, and specifically the Segre product of subvarieties of Grassmannians in an ambient Grassmannian.

The additional value of our \Cref{main theorem normally nonsingular expansion} is that it yields a systematic method for the recursive computation of Gysin coherent characteristic classes in ambient Grassmannians in terms of the normal geometry of Schubert varieties.
First examples of these normally nonsingular expansions appered in \cite[Section 4]{banagllgysin}.
There, the first author computed the Goresky-MacPherson $L$-class in (real) codimension $4$ for the Schubert varieties $X_{2, 1}$ and $X_{3, 2}$, which are sufficiently singular so as not to be rational Poincar\'{e} complexes.
Beyond these examples, it seems to be an open problem to compute $L$-classes of singular Schubert varieties.
The Chern-Schwartz-MacPherson classes of Schubert varieties were computed by Aluffi and Mihalcea \cite{aluffi}.
Our recursive formula (\ref{equation main result recuvrsive coefficient formula}) reduces computations to concrete Kronecker products (integrals) that capture the normal geometry of Schubert varieties with L-shaped Young diagrams over triple intersections of Schubert varieties (see \Cref{remark integral expression}), and to genera of explicitly constructed characteristic subvarieties (see \Cref{remark characteristic varieties}).
These characteristic subvarieties are obtained by taking the product of the given embedded variety with a certain Schubert variety, and then intersecting the Segre embedded product in the larger Grassmannian with another appropriate Schubert variety.
If the given variety is a Schubert variety, then our characteristic varieties turn out to be triple intersections of Schubert varieties.
In the literature, intersections of more than two general translates of Schubert varieties were studied by Billey and Coskun \cite{bc} as a generalization of Richardson varieties \cite{rich}.
They employed Kleiman's transversality theorem \cite{kleiman} to determine the singular locus of such intersection varieties.
The explicit computation of the integrals and genera that appear in our recursive formula (\ref{equation main result recuvrsive coefficient formula}) requires separate techniques and is hence not pursued in the present paper.

Despite the general applicability of our recursive technique asserted by \Cref{main theorem normally nonsingular expansion}, it can be more convenient in practice to implement slightly modified algorithms for doing concrete computations.
In \Cref{l class of x 3 2 1}, we illustrate such a related recursive method to compute the Goresky-MacPherson $L$-class for the example of the singular Schubert variety $X_{3, 2, 1}$ of real dimension $12$, which does not satisfy global Poincar\'{e} duality over the rationals.
Note that the computation of $L_{4}(X_{3, 2, 1})$ goes beyond the scope of the computations in \cite[Section 4]{banagllgysin}, which are limited to the $L$-class in real codimension $4$.

In \cite{banagllgysin}, the first author derived Verdier-Riemann-Roch type formulae for the Gysin restriction of both the topological characteristic classes $L_{\ast}$ of Goresky and MacPherson \cite{gmih1} and the Hodge-theoretic intersection Hirzebruch characteristic classes $IT_{1 \ast}$ of Brasselet, Sch\"{u}rmann and Yokura \cite{bsy}.
For an introduction to characteristic classes of singular spaces via mixed Hodge theory in the complex algebraic context see Sch\"{u}rmann's expository paper \cite{schuermannmsri}.
The formulae in \cite{banagllgysin} have the potential to yield new evidence for the equality of the characteristic classes $L_{\ast}$ and $IT_{1 \ast}$ for pure-dimensional compact complex algebraic varieties, as conjectured by Brasselet, Sch\"{u}rmann and Yokura in \cite[Remark 5.4]{bsy}.
Cappell, Maxim, Sch\"{u}rmann and Shaneson proved the conjecture in \cite[Cor. 1.2]{cmss1} for orbit spaces $X = Y/G$, with $Y$ a projective $G$-manifold and $G$ a finite group of algebraic automorphisms.
They also showed the conjecture for certain complex hypersurfaces with isolated singularities \cite[Theorem 4.3]{cmss2}.
The conjecture holds for simplicial projective toric varieties as shown by Maxim and Sch\"{u}rmann \cite[Corollary 1.2(iii)]{ms}.
Furthermore, the conjecture was established by the first author in \cite{banaglcovertransfer} for normal connected complex projective $3$-folds $X$ that have at worst canonical singularities, trivial canonical divisor, and $\operatorname{dim} H^{1}(X; \mathcal{O}_{X}) > 0$.
Generalizing the above cases, Fern\'{a}ndez de Bobadilla and Pallar\'{e}s \cite{fdbp} proved the conjecture for all compact complex algebraic varieties that are rational homology manifolds.
In ongoing work with J\"{o}rg Sch\"{u}rmann, we apply the methods developed in the present paper to prove the ambient version of the conjecture for a certain class of subvarieties in Grassmannians.
This class includes all Schubert varieties.
Since the homology of Schubert varieties injects into the homology of ambient Grassmannians, this would imply the conjecture for all Schubert varieties.
Furthermore, we shall clarify how other algebraic characteristic classes such as Chern classes fit into the framework.

\textbf{Acknowledgement.}
We thank J\"{o}rg Sch\"{u}rmann and Laurentiu Maxim for insightful comments on an earlier version of the paper.

\textbf{Notation.}
(Co)homology groups will be with rational coefficients unless otherwise stated.
Complex algebraic varieties are not assumed to be irreducible.
If $X$ and $Y$ are subvarieties of  a smooth variety $W$, then the symbol $X \cap Y$ denotes the set theoretic intersection of $X$ and $Y$ in $W$.
Given a set $S$, the Kronecker delta of two elements $a, b \in S$ is $\delta_{ab} = 1$ for $a = b$, and $\delta_{ab} = 0$ else.
The Kronecker product $\langle -, -\rangle$ is defined by $\langle \xi, x\rangle = \varepsilon_{\ast}(\xi \cap x) \in \mathbb{Q}$ for classes $\xi \in H^{\ast}(A)$ and $x \in H_{\ast}(A)$ on a topological space $A$, where $\varepsilon_{\ast} \colon H_{\ast}(A) \rightarrow H_{\ast}(\operatorname{pt}) = \mathbb{Q}$ denotes the augmentation map induced by the unique map $A \rightarrow \operatorname{pt}$.

\section{Whitney Transversality}
Let $W$ be a smooth manifold.
Recall that a Whitney stratification of a closed subset $Z \subset W$ is a certain decomposition of $Z$ into locally closed smooth submanifolds of $W$ such that the pieces of this decomposition fit together via Whitney's conditions A and B (for details, see e.g. \cite[Section 1.2, p. 37]{gmsmt}).
Two Whitney stratified subsets $Z_{1}, Z_{2} \subset W$ are called transverse if every stratum of $Z_{1}$ is transverse to every stratum of $Z_{2}$ as smooth submanifolds of $W$.
The transverse intersection $Z_{1} \cap Z_{2}$ then has a canonical Whitney stratification in $W$ whose strata are given by the intersections of the strata of $Z_{1}$ and $Z_{2}$.

Pure-dimensional closed subvarieties of a nonsingular complex algebraic variety can always be Whitney stratified by strata of even codimension.
In the context of this paper, we call two such subvarieties \emph{Whitney transverse} if they can be equipped with transverse Whitney stratifications.
The present section collects various results about Whitney stratified spaces that will be applied to Whitney transverse subvarieties later on in the paper.

\begin{lemma}\label{lemma whitney restratification manifold}
Let $Z_{1}, Z_{2} \subset W$ be Whitney stratified subspaces of a smooth manifold $W$ that are transverse to each other.
If $U \subset W$ is a smooth submanifold that is open as a subset of $Z_{2}$, then every stratum of $Z_{1}$ is transverse to $U$ in $W$.
\end{lemma}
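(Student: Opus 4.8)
The plan is to prove the lemma pointwise, reducing the transversality of a stratum of $Z_1$ to $U$ to the transversality of that same stratum to the $Z_2$-stratum passing through the point in question, which is supplied by the hypothesis. So fix a stratum $R$ of the given Whitney stratification of $Z_1$; I want to show $R$ is transverse to $U$ in $W$. If $R \cap U = \emptyset$ there is nothing to prove, so choose $p \in R \cap U$ and aim to verify $T_p R + T_p U = T_p W$.

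The key geometric observation is that near any of its points $U$ coincides with $Z_2$, and hence contains, as an open submanifold, the $Z_2$-stratum through that point. Concretely, since $U$ is open as a subset of $Z_2$, there is an open set $O \subseteq W$ with $U = Z_2 \cap O$, and $p \in O$. Let $S$ be the unique stratum of the Whitney stratification of $Z_2$ with $p \in S$. Then $S \cap O \subseteq Z_2 \cap O = U$, and $S \cap O$, being open in the smooth manifold $S$, is a smooth submanifold of $W$ through $p$ with $T_p(S \cap O) = T_p S$. Since it is contained in the smooth submanifold $U$ of $W$, a curve argument (any smooth curve in $S \cap O$ through $p$ is in particular a smooth curve in $U$ through $p$) gives the tangent-space inclusion $T_p S \subseteq T_p U$.

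It remains to combine this with the transversality hypothesis. Because $Z_1$ and $Z_2$ are transverse, the stratum $R$ of $Z_1$ is transverse to the stratum $S$ of $Z_2$, i.e. $T_p R + T_p S = T_p W$. Feeding in $T_p S \subseteq T_p U$ yields $T_p W = T_p R + T_p S \subseteq T_p R + T_p U \subseteq T_p W$, whence $T_p R + T_p U = T_p W$. As $p \in R \cap U$ and the stratum $R$ were arbitrary, every stratum of $Z_1$ is transverse to $U$ in $W$.

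I do not expect a serious obstacle: this is a short lemma and the argument above is essentially complete. The one point that requires care is that $Z_2$ may be "over-stratified" — it can be locally a smooth manifold while still being decomposed into strata of strictly smaller dimension, so that the stratum $S$ through $p$ need not equal $U$ near $p$. This is precisely why one must not attempt to identify $S$ with $U$ locally, but instead only exploit the inclusion $S \cap O \subseteq U$ (hence $T_p S \subseteq T_p U$) and apply the transversality hypothesis to the possibly small stratum $S$ rather than to $U$ directly.
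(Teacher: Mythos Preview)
Your proof is correct and follows essentially the same approach as the paper: both arguments pick the $Z_2$-stratum $S$ through the point, observe that an open piece of $S$ lies inside $U$ (you via an abstract open $O$ with $U = Z_2 \cap O$, the paper via the explicit $U' = W \setminus (Z_2 \setminus U)$), deduce $T_p S \subseteq T_p U$, and then feed this into the transversality hypothesis $T_p R + T_p S = T_p W$. Your closing remark about over-stratification correctly identifies the only subtlety.
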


\begin{proof}
Let $z \in Z_{1} \cap U$.
Let $S \subset Z_{1}$ and $S' \subset Z_{2}$ denote the strata containing $z$.
Then, $S$ and $S'$ are transverse at $z$ in $W$.
Note that $U' := W \setminus (Z_{2} \setminus U)$ is an open subset of $W$ because $Z_{2}$ is a closed subset of $W$.
Since $z \in U'$, it follows that $U' \cap S$ and $U' \cap S'$ are transverse at $z$ in $U'$.
Since $U \subset U'$ is a smooth submanifold that contains $U' \cap S'$, it follows that $U' \cap S$ and $U$ are transverse at $z$ in $U'$.
Consequently, $S$ and $U$ are transverse at $z$ in $W$, and the claim follows.
\end{proof}

The following topological version of Kleiman's transversality theorem \cite{kleiman} will be applied for the canonical action of the Lie group $G = GL(V)$ on the Grassmannian $W = G_{k}(V)$ for a complex vector space $V$ of finite dimension.

\begin{thm}\label{thm kleiman for whitney transversality}
(See e.g. \cite[p. 39, Theorem 1.3.6 and Examples 1.3.7]{gmsmt}.)
Let $W$ be a smooth manifold that is homogeneous under the action of a Lie Group $G$.
Given Whitney stratified subspaces $Z_{1}, Z_{2} \subset W$, the set $U$ of all $g \in G$ such that $g \cdot Z_{1}$ and $Z_{2}$ are transverse in $W$ is dense in $G$.
Moreover, if $Z_{1}$ is compact, then $U$ is also open in $G$.
\end{thm}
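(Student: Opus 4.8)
The plan is to prove the two assertions separately, in each case reducing to a statement about individual pairs of strata. Since left translation by any $g\in G$ is a diffeomorphism of $W$, the translate $g\cdot Z_1$ is again Whitney stratified, with strata the sets $g\cdot S$ for $S$ a stratum of $Z_1$; hence $g\cdot Z_1$ and $Z_2$ are transverse in $W$ precisely when $g\cdot S$ is transverse to $T$ in $W$ for every stratum $S$ of $Z_1$ and every stratum $T$ of $Z_2$. Writing $U_{S,T} = \{g\in G : g\cdot S \pitchfork T \text{ in } W\}$, we therefore have $U = \bigcap_{S,T} U_{S,T}$, a countable intersection because $W$ is second countable and the stratifications of $Z_1$ and $Z_2$ are locally finite.

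For density, fix strata $S$ and $T$ and consider the evaluation map $e_S\colon G\times S \to W$, $e_S(g,s) = g\cdot s$. Because $G$ acts transitively on $W$, each orbit map $g\mapsto g\cdot s$ is a submersion, so $e_S$ is a submersion, and in particular transverse to the submanifold $T\subset W$; thus $N_{S,T} := e_S^{-1}(T)$ is a submanifold of $G\times S$. The standard parametric transversality argument shows that whenever $g\in G$ is a regular value of the projection $p_{S,T}\colon N_{S,T}\to G$, the restriction $e_S(g,-)\colon S\to W$ is transverse to $T$, i.e. $g\cdot S \pitchfork T$; hence $G\setminus U_{S,T}$ is contained in the set of critical values of $p_{S,T}$, which has measure zero by Sard's theorem (all manifolds involved being second countable). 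Therefore $G\setminus U = \bigcup_{S,T}(G\setminus U_{S,T})$ is a countable union of null sets, so $U$ has full measure in $G$ and is in particular dense.

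For openness under the hypothesis that $Z_1$ is compact, I would show that $G\setminus U$ is closed. Suppose $g_n\to g_0$ in $G$ with $g_n\notin U$, and choose $x_n\in (g_n\cdot Z_1)\cap Z_2$ at which the stratum of $g_n\cdot Z_1$ through $x_n$ fails to be transverse to the stratum of $Z_2$ through $x_n$. Writing $x_n = g_n\cdot y_n$ with $y_n\in Z_1$, compactness of $Z_1$ lets us pass to a subsequence with $y_n\to y_0\in Z_1$, so $x_n\to x_0 := g_0\cdot y_0\in (g_0\cdot Z_1)\cap Z_2$. Since $Z_1$ has only finitely many strata and the stratification of $Z_2$ is locally finite near $x_0$, we may further assume that the stratum $S$ of $Z_1$ containing $y_n$ and the stratum $Q$ of $Z_2$ containing $x_n$ do not depend on $n$; let $S_0$ and $Q_0$ be the strata through $y_0$ and $x_0$, so that $S_0\subset\overline{S}$ and $Q_0\subset\overline{Q}$. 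After a further subsequence, $T_{y_n}S$ and $T_{x_n}Q$ converge in the relevant Grassmannians, and Whitney's condition A yields that their limits contain $T_{y_0}S_0$ and $T_{x_0}Q_0$ respectively. Since left translation depends smoothly on both variables, the planes $T_{x_n}(g_n\cdot S)$ also converge, to a subspace $\tau$ with $T_{x_0}(g_0\cdot S_0)\subseteq\tau$; write $\sigma$ for the limit of $T_{x_n}Q$, so $T_{x_0}Q_0\subseteq\sigma$. Each sum $T_{x_n}(g_n\cdot S)+T_{x_n}Q$ is a proper subspace of $T_{x_n}W$, and $\dim(A+B)$ is lower semicontinuous in the pair $(A,B)$; hence $\tau+\sigma$ is a proper subspace of $T_{x_0}W$, so $T_{x_0}(g_0\cdot S_0)+T_{x_0}Q_0$ is proper as well. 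Thus $g_0\cdot Z_1$ and $Z_2$ are not transverse at $x_0$, i.e. $g_0\notin U$, and $G\setminus U$ is closed.

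Of these steps, the submersivity of the orbit maps, the parametric transversality lemma, and Sard's theorem are entirely standard; I expect the main obstacle to be the openness argument, where one must control the limit of the tangent planes $T_{x_n}(g_n\cdot S)$ as the base points $x_n$ converge to a point $x_0$ that may lie in a smaller stratum. This is precisely where Whitney's condition A is used, and it is the reason the statement fails for general partitions of $Z_1$, $Z_2$ into submanifolds; the remaining difficulty is the bookkeeping needed to extract the various convergent subsequences.
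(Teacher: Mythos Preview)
The paper does not supply its own proof of this theorem; it simply cites Goresky--MacPherson \cite[p.~39, Theorem~1.3.6 and Examples~1.3.7]{gmsmt} as a reference. Your argument is essentially the standard one behind that reference: reduce to pairs of strata, obtain density from the parametric transversality theorem together with Sard (using that transitivity of the $G$-action makes every orbit map a submersion), and obtain openness by a limit argument in which Whitney's condition~A controls the tangent planes of the larger strata as the base points drop into a boundary stratum. The proof is correct as written.

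One small remark on presentation: in the openness argument you pass to a subsequence so that the strata $S$ and $Q$ are constant, and then invoke Whitney~A at the limit point. It may be worth making explicit that compactness of $Z_1$ together with local finiteness of a Whitney stratification forces $Z_1$ to have only finitely many strata (you state this but do not justify it), and that local finiteness of the stratification of $Z_2$ near $x_0$ is what allows the analogous reduction for $Q$. Also, the sentence ``$\dim(A+B)$ is lower semicontinuous in the pair $(A,B)$'' deserves a word: it follows because $\dim(A+B)\geq k$ is an open condition on pairs of subspaces, so the limit $\tau+\sigma$ cannot have larger dimension than the $T_{x_n}(g_n\cdot S)+T_{x_n}Q$. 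These are cosmetic points; the mathematics is sound, and your use of Whitney~A is exactly where the stratification hypothesis enters.
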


Recall that an inclusion $g \colon Y \hookrightarrow X$ of topological spaces is called (oriented) normally nonsingular (of codimension $r$) if there is a (oriented) real vector bundle $\nu \colon E \rightarrow Y$ (of rank $r$), a neighborhood $U \subset E$ of the zero section of $\nu$ (which we also denote by $Y$), and a homeomorphism $j \colon U \rightarrow X$ onto an open subset $j(U) \subset X$ such that $g$ factorizes as the composition $Y \xrightarrow{\operatorname{incl}} U \xrightarrow{j} X$ (see e.g. \cite[Section 1.11, p. 46f]{gmsmt}).
We also call $\nu$ a normal bundle of the normally nonsingular inclusion $g$.
For example, transverse intersections give rise to normally nonsingular inclusions as follows.

\begin{thm}\label{theorem normally nonsingular inclusions induced by transverse intersections}
(See Theorem 1.11 in \cite[p. 47]{gmsmt}.)
Let $X \subset W$ be a Whitney stratified subset of a smooth manifold $W$.
Suppose that $M \subset W$ is a smooth submanifold of codimension $r$ that is transverse to every stratum of $X$, and set $Y = M \cap X$.
Then, the inclusion $g \colon Y \hookrightarrow X$ is normally nonsingular of codimension $r$ with respect to the normal bundle $\nu = \nu_{M \subset W}|_{Y}$ given by restriction of the normal bundle $\nu_{M \subset W}$ of $M$ in $W$.
\end{thm}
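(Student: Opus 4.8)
\emph{Proof strategy.}
The plan is to reduce the statement to the existence of a tubular neighborhood of $M$ in $W$ that is adapted to the stratification of $X$, and then to read off the normal bundle directly. Since the assertion is local near $Y = M \cap X$, I would first shrink $W$ to a neighborhood of $Y$. Choose a Whitney stratification of $X$ to which $M$ is transverse, and refine it by splitting each stratum $S$ along the smooth submanifold $S \cap M \subset S$: the strata that are split are split along properly embedded submanifolds, so the Whitney conditions are preserved, $M$ stays transverse to the refinement, and $Y$ becomes a union of strata of $X$. Write $X_{\mathrm{reg}}$ for the union of the top-dimensional strata.

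The core step is to produce a tubular neighborhood $\phi \colon E_{0} \xrightarrow{\ \sim\ } T \subset W$ of $M$ — with $E_{0}$ an open neighborhood of the zero section of $\nu = \nu_{M \subset W}$ and $\phi$ the identity on $M$ — that is \emph{compatible} with $X$, in the sense that $\phi^{-1}(X \cap T)$ is invariant under the fiberwise scalar multiplications $\mu_{t}$ of $\nu$, $0 \le t \le 1$. Over $X_{\mathrm{reg}}$ one can take $\phi$ to restrict to a classical tubular neighborhood of $Y \cap X_{\mathrm{reg}}$ inside $X_{\mathrm{reg}}$, and there the transverse intersection of manifolds identifies its normal bundle with $\nu|_{Y \cap X_{\mathrm{reg}}}$. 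To obtain $\phi$ over all of $X$ one extends this tube datum inward over the strata of $X$, in order of increasing codimension, at each stage using the conical local structure of Whitney stratified spaces and controlled vector fields to extend the part already constructed over the closure of the higher strata, while keeping the fibers of $\phi$ transverse to every stratum of $X$. This is a Thom--Mather type construction, and the transversality hypothesis on $M$ is precisely what makes the two constraints — compatibility with the Whitney structure and transversality of the fibers — simultaneously satisfiable.

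Granting such a $\phi$, the theorem follows quickly. Because $\phi^{-1}(X \cap T)$ is $\mu_{t}$-invariant for $0 \le t \le 1$ and meets the zero section of $E_{0}$ exactly in $X \cap M = Y$, it is, over each point of $Y$, a star-shaped neighborhood of the origin in the corresponding fiber of $\nu$; hence, after passing to a slightly smaller open disk sub-bundle $U$ of $\nu|_{Y}$, the map $\phi$ restricts to a homeomorphism of $U$ onto an open neighborhood of $Y$ in $X$ that is the identity on $Y$. By the definition of a normally nonsingular inclusion recalled above, this exhibits $g \colon Y \hookrightarrow X$ as normally nonsingular of codimension $r$ with normal bundle $\nu_{M \subset W}|_{Y}$, as claimed.

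I expect the only real obstacle to be the second step — the construction of the compatible tubular neighborhood (equivalently, the construction of a controlled stratified vector field on $X$ near $Y$ lifting the radial field of the tube of $M$, which one then integrates). One has to run the inductive Thom--Mather extension over the depth filtration of $X$, arranging that the control data stay coherent at each stage while the fibers of the tube remain transverse to all strata; this is the point at which the transversality of $M$ with the strata of $X$ is used essentially. The remaining ingredients — the stratification refinement of the first step and the elementary bundle-theoretic argument of the third — are routine once the compatible tube is in hand.
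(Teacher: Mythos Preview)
The paper does not give its own proof of this theorem; it is quoted with a reference to Goresky--MacPherson's \emph{Stratified Morse Theory} (Theorem~1.11 there), and the result is used as a black box. Your outline is essentially the standard Thom--Mather argument behind that reference: one builds a tubular neighborhood of $M$ in $W$ whose projection and radius function restrict to a proper controlled stratified submersion on $X$, and then integrates a controlled lift of the radial field (equivalently, invokes Thom's first isotopy lemma) to obtain the local product structure $U \cong Y \hookrightarrow X$. So in spirit your approach matches the cited source.

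One concrete issue: your preliminary refinement step is both unnecessary and, as stated, incorrect. If you split each stratum $S$ into $S \cap M$ and $S \setminus M$, then the new strata contained in $M$ are \emph{not} transverse to $M$ (for $p \in S \cap M$ one has $T_{p}(S \cap M) + T_{p}M = T_{p}M \subsetneq T_{p}W$), so your assertion that ``$M$ stays transverse to the refinement'' fails. Fortunately nothing in the core step requires $Y$ to be a union of strata of $X$: transversality of $M$ to the \emph{original} strata is exactly what makes the tubular projection a stratified submersion on $X$ near $Y$, and the isotopy lemma then does the work. Drop the refinement and run your second and third steps directly; the rest of your sketch is sound.
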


An oriented normally nonsingular inclusion $g \colon Y \hookrightarrow X$ of a closed subset $Y \subset X$ with normal bundle $\nu \colon E \rightarrow Y$ of rank $r$ induces a Gysin homomorphism
$$
g^{!} \colon H_{\ast}(X; \mathbb{Q}) \rightarrow H_{\ast-r}(Y; \mathbb{Q})
$$
given by the composition
$$
H_{\ast}(X) \xrightarrow{\operatorname{incl_{\ast}}} H_{\ast}(X, X \setminus Y) \xleftarrow[\cong]{e_{\ast}} H_{\ast}(E, E_{0}) \xrightarrow[\cong]{u \cap -} H_{\ast-r}(E) \xrightarrow[\cong]{\nu_{\ast}} H_{\ast-r}(Y),
$$
where $u \in H^{r}(E, E_{0})$ is the Thom class with $E_{0} = E \setminus Y$ the complement of the zero section of $\nu$ in $E$, and $e_{\ast}$ denotes the excision isomorphism induced by the open embedding $j \colon U \rightarrow X$.

Gysin homomorphisms are compatible with pushforward under embeddings as follows.

\begin{prop}[Base Change]\label{proposition gysin compatible with inclusions}
Consider a cartesian square
\[ \xymatrix{
L \ar[r]^{\beta} \ar[d]_{g} & Y \ar[d]^{f} \\
K \ar[r]_{\alpha} & X
} \]
of topological spaces and continuous maps, where $f$ and $g$ are oriented normally nonsingular inclusions of closed subsets with normal bundles $\nu_{f}$ and $\nu_{g}$, respectively, such that $\nu_{g} = \beta^{\ast}\nu_{f}$.
Then,
$$
\beta_{\ast} g^{!} = f^{!} \alpha_{\ast}.
$$
\end{prop}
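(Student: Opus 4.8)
The plan is to unravel both Gysin homomorphisms into the chains of (iso)morphisms that define them, and then to join the two chains by a commutative ladder built from naturality. Write $f^{!} = \Phi_{f}\circ\operatorname{incl}_{\ast}$, where
\[
\Phi_{f} = (\nu_{f})_{\ast}\circ(u_{f}\cap-)\circ e_{f\ast}^{-1}\colon H_{\ast}(X, X\setminus Y)\longrightarrow H_{\ast-r}(Y),
\]
and likewise $g^{!} = \Phi_{g}\circ\operatorname{incl}_{\ast}$ with $\Phi_{g}\colon H_{\ast}(K, K\setminus L)\to H_{\ast-r}(L)$.

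First I would exploit that the square is cartesian and that $f$, $g$ are inclusions of closed subsets: this identifies $L$ with the closed subset $\alpha^{-1}(Y)\subset K$, the map $g$ with the inclusion $\alpha^{-1}(Y)\hookrightarrow K$, and $\beta$ with the restriction $\alpha|_{L}$. In particular $\alpha(K\setminus L)\subseteq X\setminus Y$, so $\alpha$ induces a map of pairs $\bar\alpha\colon(K, K\setminus L)\to(X, X\setminus Y)$, and naturality of the homology long exact sequence of a pair gives $\operatorname{incl}_{\ast}\circ\alpha_{\ast} = \bar\alpha_{\ast}\circ\operatorname{incl}_{\ast}$. Hence the assertion $\beta_{\ast}g^{!} = f^{!}\alpha_{\ast}$ reduces to the identity $\beta_{\ast}\circ\Phi_{g} = \Phi_{f}\circ\bar\alpha_{\ast}$ of maps $H_{\ast}(K, K\setminus L)\to H_{\ast-r}(Y)$.

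Next, the hypothesis $\nu_{g} = \beta^{\ast}\nu_{f}$ identifies the total space $E_{g}$ with the pullback $L\times_{Y}E_{f}$ and supplies a bundle map $\hat\beta\colon E_{g}\to E_{f}$ covering $\beta$; it restricts to a map of pairs $(E_{g}, (E_{g})_{0})\to(E_{f}, (E_{f})_{0})$, satisfies $\nu_{f}\circ\hat\beta = \beta\circ\nu_{g}$, and — since it is fibrewise an orientation-preserving isomorphism — pulls the Thom class back to the Thom class, $\hat\beta^{\ast}u_{f} = u_{g}$. The projection formula $\hat\beta_{\ast}(\hat\beta^{\ast}u_{f}\cap x) = u_{f}\cap\hat\beta_{\ast}x$ together with $(\nu_{f})_{\ast}\circ\hat\beta_{\ast} = \beta_{\ast}\circ(\nu_{g})_{\ast}$ then make the squares
\[
\xymatrix{
H_{\ast}(E_{g}, (E_{g})_{0})\ar[r]^-{u_{g}\cap-}\ar[d]_{\hat\beta_{\ast}} & H_{\ast-r}(E_{g})\ar[r]^-{(\nu_{g})_{\ast}}\ar[d]^{\hat\beta_{\ast}} & H_{\ast-r}(L)\ar[d]^{\beta_{\ast}}\\
H_{\ast}(E_{f}, (E_{f})_{0})\ar[r]^-{u_{f}\cap-} & H_{\ast-r}(E_{f})\ar[r]^-{(\nu_{f})_{\ast}} & H_{\ast-r}(Y)
}
\]
commute. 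It then remains to link $\bar\alpha_{\ast}$ with $\hat\beta_{\ast}$ through the two excision isomorphisms, i.e. to prove $\bar\alpha_{\ast}\circ e_{g\ast} = e_{f\ast}\circ\hat\beta_{\ast}$; granting this, concatenation with the diagram above yields $\Phi_{f}\circ\bar\alpha_{\ast} = (\nu_{f})_{\ast}\circ(u_{f}\cap-)\circ\hat\beta_{\ast}\circ e_{g\ast}^{-1} = \beta_{\ast}\circ(\nu_{g})_{\ast}\circ(u_{g}\cap-)\circ e_{g\ast}^{-1} = \beta_{\ast}\circ\Phi_{g}$, which completes the argument.

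The one step that is not purely formal — and the one I expect to be the main obstacle — is the excision compatibility $\bar\alpha_{\ast}\circ e_{g\ast} = e_{f\ast}\circ\hat\beta_{\ast}$. The isomorphisms $e_{f\ast}$, $e_{g\ast}$ are built from chosen tubular neighbourhoods $j_{f}\colon U_{f}\hookrightarrow X$ and $j_{g}\colon U_{g}\hookrightarrow K$, and $\alpha$ need not carry the second tube into the first on the nose. The way around this is to use the pullback description $E_{g} = L\times_{Y}E_{f}$ to choose the tube of $L$ in $K$ compatibly with $j_{f}$ and $\hat\beta$ — which is legitimate because the Gysin construction does not depend on the chosen tube — so that $j_{f}^{-1}\circ\alpha\circ j_{g}$ and $\hat\beta$ induce the same map on the relevant relative homology groups near the zero section, whence the excision square commutes on representing cycles. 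In the situations in which this proposition is applied in the present paper, $Y$, $L$ and $K$ all arise by intersecting with smooth submanifolds as in \Cref{theorem normally nonsingular inclusions induced by transverse intersections}, so all tubes are obtained by restriction from a single tubular neighbourhood in the ambient smooth manifold and this compatibility is automatic.
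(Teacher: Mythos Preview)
Your proof is correct and follows the same idea as the paper, which disposes of the proposition in a single sentence: ``The statement follows from the naturality of Thom classes, using $\nu_{g} = \beta^{\ast}\nu_{f}$.''  What you have written is a careful unpacking of that sentence, and your identification of the tube-compatibility issue in the excision step is a genuine subtlety that the paper's one-liner glosses over; your suggested resolution (choose the tubular neighbourhood of $L$ in $K$ as the pullback of that of $Y$ in $X$, which is legitimate once one knows the Gysin map is independent of the tube) is the standard way to handle it.
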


\begin{proof}
The statement follows from the naturality of Thom classes, using $\nu_{g} = \beta^{\ast}\nu_{f}$.
\end{proof}

Recall that every compact oriented $r$-dimensional pseudomanifold $X$ possesses a fundamental class
$$
[X]_{X} \in H_{r}(X; \mathbb{Z}).
$$
If $X$ is contained in an ambient space $W$, then we write $[X]_{W}$ for the image of $[X]_{X}$ under the map $H_{\ast}(X; \mathbb{Z}) \rightarrow H_{\ast}(W; \mathbb{Z})$ induced by the inclusion $X \hookrightarrow W$.

\begin{prop} \label{lem.gysinfund}
Let $W$ be an oriented smooth manifold,
$X,K\subset W$ Whitney stratified subspaces which are oriented
pseudomanifolds with $K\subset X$ and $K$ compact.
Let $M\subset W$ be an oriented smooth submanifold which is closed as a subset.
Suppose that $M$ is transverse to the 
Whitney strata of $X$ and to the Whitney strata of $K$.
Then the Gysin map
\[ g^!: H_* (X;\rat) \longrightarrow H_{*-r} (Y;\rat) \]
associated to the normally nonsingular embedding $g: Y=M\cap X \hookrightarrow X$,
where $r$ is the (real) codimension of $Y$ in $X$,
sends the fundamental class
$[K]_X \in H_* (X;\rat)$ of $K$ to the fundamental class
$[K\cap Y]_Y$ of the intersection $K\cap Y = M \cap K$ (which is again an oriented pseudomanifold),
\[ g^! [K]_X = [K\cap Y]_Y. \]
\end{prop}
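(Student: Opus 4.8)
The plan is to reduce to the case $X=K$ by means of the Base Change \Cref{proposition gysin compatible with inclusions}, and then to compute the Gysin image of the fundamental class through the Thom isomorphism of the normal bundle, verifying the resulting identity on regular strata.

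\textbf{Step 1: Reduction to $X=K$.} In the notation of \Cref{proposition gysin compatible with inclusions}, take the bottom inclusion $\alpha\colon K\hookrightarrow X$, the right inclusion $f\colon Y=M\cap X\hookrightarrow X$ (this $f$ is the embedding called $g$ in the present statement), the left inclusion $h\colon M\cap K\hookrightarrow K$, and $\beta\colon M\cap K\hookrightarrow Y$. Since $K\subset X$, we have $K\times_X Y=K\cap Y=M\cap K$, so this square is cartesian. By \Cref{theorem normally nonsingular inclusions induced by transverse intersections}, applied once to $X$ and once to $K$ (legitimate since $M$ is transverse to the strata of both), $f$ and $h$ are oriented normally nonsingular of codimension $r$ with normal bundles $\nu_f=\nu_{M\subset W}|_Y$ and $\nu_h=\nu_{M\subset W}|_{M\cap K}$, oriented by the orientations of $M$ and $W$; as $\beta$ is the inclusion $M\cap K\subset Y$ we get $\beta^{\ast}\nu_f=\nu_h$. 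Hence \Cref{proposition gysin compatible with inclusions} gives $\beta_{\ast}h^{!}=f^{!}\alpha_{\ast}$. Evaluating on $[K]_K$ and using $[K]_X=\alpha_{\ast}[K]_K$ and $[K\cap Y]_Y=\beta_{\ast}[M\cap K]_{M\cap K}$, the asserted identity $f^{!}[K]_X=[K\cap Y]_Y$ becomes equivalent to $h^{!}[K]_K=[M\cap K]_{M\cap K}$, i.e.\ to the case $X=K$.

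\textbf{Step 2: The case $X=K$.} Write $Z:=M\cap K$, a compact oriented $n$-pseudomanifold with $n=\dim K-r$, and let $h\colon Z\hookrightarrow K$ be the induced oriented normally nonsingular inclusion with normal bundle $\nu\colon E\to Z$, Thom class $u\in H^{r}(E,E_0)$, tubular neighborhood $j\colon U\hookrightarrow K$, and excision isomorphism $e_{\ast}$. By the definition of $h^{!}$, the class $h^{!}[K]_K$ is the image of $\operatorname{incl}_{\ast}[K]_K\in H_{\ast}(K,K\setminus Z)$ under the composite $\nu_{\ast}\circ(u\cap-)\circ e_{\ast}^{-1}$, which is precisely the homology Thom isomorphism $H_{\ast}(K,K\setminus Z)\xrightarrow{\ \cong\ }H_{\ast-r}(Z)$. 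So it suffices to show that this isomorphism sends $\operatorname{incl}_{\ast}[K]_K$ to $[Z]_Z$. As $\dim Z^{\mathrm{sing}}\le n-2$, the map $H_n(Z)\to H_n(Z,Z^{\mathrm{sing}})\cong H_n^{\mathrm{BM}}(Z^{\mathrm{reg}})$ is injective, so it is enough to check the identity after restricting to the dense regular stratum $Z^{\mathrm{reg}}=M\cap K^{\mathrm{reg}}$. Over the smooth open manifold $K^{\mathrm{reg}}$, however, $\operatorname{incl}_{\ast}[K]_K$ restricts to the localization along $Z^{\mathrm{reg}}$ of the fundamental class of $K^{\mathrm{reg}}$, and the classical manifold computation of the Thom isomorphism of $E|_{Z^{\mathrm{reg}}}$ carries it to $[Z^{\mathrm{reg}}]^{\mathrm{BM}}$; comparing with the transverse-intersection orientation of $Z$ gives $h^{!}[K]_K=[Z]_Z$.

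\textbf{Main obstacle.} Step 1 is formal. The substance lies in Step 2: identifying the composite in the definition of the Gysin map with the homology Thom isomorphism, reducing from $Z$ to $Z^{\mathrm{reg}}$ via uniqueness of fundamental classes of pseudomanifolds, and — the most error-prone point — verifying that the orientation of $\nu_{M\subset W}$ induced by those of $M$ and $W$, together with the orientation of $K$, reproduces exactly the standard transverse-intersection orientation of $Z=M\cap K$, so that the two fundamental classes agree without an intervening sign.
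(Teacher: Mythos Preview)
Your proposal is correct and follows essentially the same approach as the paper: both reduce via the base change of \Cref{proposition gysin compatible with inclusions} to the statement $g_K^{!}[K]_K=[M\cap K]_{M\cap K}$ for the normally nonsingular inclusion $g_K\colon M\cap K\hookrightarrow K$. The paper simply asserts this last step (``Gysin restriction maps the fundamental class to the fundamental class''), whereas your Step~2 supplies an actual argument for it via the Thom isomorphism and reduction to the regular stratum; so you are more thorough here, not different in spirit.
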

\begin{proof}
Let $\nu_M$ denote the normal bundle of $M$ in $W$.
This is an oriented bundle, since $M$ and $W$ are oriented.
Consider the cartesian square
\[ \xymatrix{
Y \ar[r] \ar[d]_g & M \ar[d] \\
X \ar[r] & W.
} \]
Since $M$ is transverse to the Whitney stratification of $X$, the inclusion
$g: Y\subset X$ is normally nonsingular with oriented normal bundle
\[ \nu_Y = \nu_M|_Y \]
by \Cref{theorem normally nonsingular inclusions induced by transverse intersections}.
The associated Gysin map is
\[ g^!: H_\ast (X) \longrightarrow H_{\ast-r} (Y). \]
Furthermore, $Y$ is an (oriented) pseudomanifold,
since $X$ is.
Set 
\[ L := Y \cap K = M\cap X \cap K = M\cap K,  \]
which is compact because it is by assumption a closed subset of the compact space $K$.
Consider the cartesian square
\[ \xymatrix{
L \ar[r] \ar[d]_{g_K} & M \ar[d] \\
K \ar[r] & W.
} \]
Since $M$ is transverse to the Whitney stratification of $K$, the inclusion
$g_K: L \rightarrow K$ is normally nonsingular with oriented normal bundle
\[ \nu_L = \nu_M|_L. \]
The associated Gysin map is
\[ g^!_K: H_\ast (K) \longrightarrow H_{\ast-r} (L). \]
Furthermore, $L$ is a compact (oriented) pseudomanifold,
since $K$ is. In particular, $K$ and $L$ have fundamental classes
\[ [K]_K \in H_d (K),~ [L]_L \in H_{d-r} (L), \]
where $d$ is the (real) dimension of $K$ and, hence, $d-r$ is the (real)
dimension of $L$.
Consider the cartesian square
\[ \xymatrix{
L \ar[r]^{f_{Y}} \ar[d]_{g_{K}} & Y \ar[d]^{g} \\
K \ar[r]_{f} & X.
} \]
The above isomorphisms of normal bundles yield an isomorphism
\begin{equation} \label{equ.nuwnuyw}
\nu_L = \nu_M|_L = (\nu_M|Y)|_L = \nu_Y|_L = f^*_Y \nu_Y. 
\end{equation}
Hence, base change (see \Cref{proposition gysin compatible with inclusions}) implies
\begin{equation} \label{equ.gysgcommf}
 f_{Y*} g^!_K = g^! f_*. 
\end{equation}
As Gysin restriction maps the fundamental class to the fundamental class, we conclude that
\[
g^! [K]_X
= g^! f_* [K]_K 
= f_{Y*} g^!_K [K]_K 
= f_{Y*} [L]_L 
= [L]_Y.
\]
\end{proof}

\section{Generic Transversality}
In this section, we review the concept of generic transversality.
As an application (see \Cref{lemma intersection is cap product for whitney transverse subvarieties} below), we discuss the relation of the intersection of algebraic subvarieties of an ambient smooth projective variety to the intersection product of their fundamental classes.

In the following, let $X,Y \subset W$ be irreducible closed subvarieties of a nonsingular irreducible complex algebraic variety $W$.

\begin{defn}\label{def.gentransverse}
(See Eisenbud-Harris \cite[p. 18]{eh}.)
We say that $X$ and $Y$ are \emph{generically transverse in $W$} if every irreducible component of $X\cap Y$ contains a point $p$ such that $X$ and $Y$ are both smooth at $p$ and their tangent spaces satisfy $T_{p} X + T_{p} Y = T_{p} W$.
\end{defn}

\begin{remark}\label{rem.gentransverse}
The set of points $p$ of $X \cap Y$ at which both $X$ and $Y$ are smooth and $T_{p} X + T_{p} Y = T_{p} W$ is a Zariski open subset of $X\cap Y$.
Thus, if this set is not empty in some irreducible component of $X\cap Y$, then it is dense in that component.
\end{remark}

\begin{prop}\label{prop.gentransverse}
(See Richardson \cite[Proposition 1.2]{rich}.)
Let $Z$ be an irreducible component of $X \cap Y$, and let $p \in Z$ be a point at which both $X$ and $Y$ are smooth and $T_{p} X + T_{p} Y = T_{p} W$.
Then, $p$ is a smooth point of $Z$, and $T_{p} Z = T_{p} X \cap T_{p} Y$.
\end{prop}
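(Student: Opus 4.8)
The plan is to work Zariski-locally around the point $p$ and translate the statement about tangent spaces into a statement about the local rings, using that transversality of $T_p X$ and $T_p Y$ forces the local intersection to be as nice as possible. First I would pass to a Zariski-open affine neighborhood $W'$ of $p$ in $W$ on which $X$ and $Y$ are both smooth (possible since $p$ is a smooth point of each). On such a neighborhood, after possibly shrinking further, $X\cap W'$ is cut out by a regular sequence of functions $f_1,\dots,f_c$ ($c=\codim_W X$) whose differentials $d_p f_1,\dots,d_p f_c$ are linearly independent in $T_p^\ast W$ and span the conormal space $N_p^\ast X := (T_p W / T_p X)^\ast$; similarly $Y\cap W'$ is cut out by $g_1,\dots,g_d$ ($d=\codim_W Y$) with $d_p g_1,\dots,d_p g_d$ spanning $N_p^\ast Y$. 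The scheme-theoretic intersection $X\cap Y$ near $p$ is then cut out in $W'$ by the $c+d$ functions $f_1,\dots,f_c,g_1,\dots,g_d$.

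The key step is the linear-algebra input: the hypothesis $T_p X + T_p Y = T_p W$ is equivalent, by dualizing, to $N_p^\ast X \cap N_p^\ast Y = 0$ inside $T_p^\ast W$, hence the combined collection $d_p f_1,\dots,d_p f_c, d_p g_1,\dots,d_p g_d$ is linearly independent in $T_p^\ast W$ (it is the union of two bases of subspaces meeting only in $0$; being $c+d = \codim X + \codim Y$ many vectors, and since $\codim X + \codim Y \le \dim W$ this stays within the dimension bound). Therefore $f_1,\dots,f_c,g_1,\dots,g_d$ form part of a regular system of parameters at $p$: the Jacobian criterion then shows that the scheme $X\cap Y$ is smooth at $p$ of dimension $\dim W - (c+d)$, with tangent space $T_p(X\cap Y) = \bigcap_i \ker d_p f_i \cap \bigcap_j \ker d_p g_j = T_p X \cap T_p Y$. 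Since $Z$ is an irreducible component of $X\cap Y$ through $p$ and $X\cap Y$ is smooth (in particular reduced and locally irreducible) at $p$, the component $Z$ coincides with $X\cap Y$ in a neighborhood of $p$; hence $p$ is a smooth point of $Z$ and $T_p Z = T_p X \cap T_p Y$, as claimed.

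The main obstacle I anticipate is purely bookkeeping rather than conceptual: making sure the passage from "$X$ and $Y$ smooth at $p$" to "locally cut out by regular sequences with prescribed differentials" is done cleanly, and being careful that the scheme-theoretic intersection near $p$ really is defined by the union of the two ideals (so that the Jacobian of the combined system is the relevant object). One should also note that $Z$ being an \emph{irreducible component} — not merely a subvariety — of $X\cap Y$ is what lets us conclude $T_p Z = T_p(X\cap Y)$ rather than just an inclusion: a smaller subvariety through $p$ could have a strictly smaller tangent space, but a top-dimensional component of a scheme smooth at $p$ must agree with the whole scheme locally. Modulo these routine verifications, the statement is essentially the implicit function theorem together with the dual reformulation of transversality, so no deeper input is needed; in particular \Cref{rem.gentransverse} already records that such points $p$ form a dense open subset of each component, which is consistent with the genericity of the conclusion here.
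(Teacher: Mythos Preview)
Your argument is correct: the Jacobian criterion applied to the combined defining equations $f_1,\dots,f_c,g_1,\dots,g_d$ is exactly the right tool, and your observation that smoothness of $X\cap Y$ at $p$ forces local irreducibility (so that the component $Z$ agrees with $X\cap Y$ near $p$) is the point that closes the argument. The paper does not supply its own proof of this proposition but simply cites Richardson \cite[Proposition 1.2]{rich}; your approach is the standard one and is essentially what Richardson does as well.
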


Recall that the codimension of $X$ in $W$ is defined as $\codim_W X = \dim W - \dim X$.

\begin{cor}\label{prop codimension of transverse intersection}
(See also Eisenbud-Harris \cite[Proposition 1.28, p. 33]{eh}.)
If $X,Y$ are generically transverse in $W$, then every irreducible component $Z$ of $X\cap Y$ satisfies
\[ \codim_W Z = \codim_W X + \codim_W Y. \]
In particular, the closed subvariety $X \cap Y \subset W$ is pure-dimensional.
\end{cor}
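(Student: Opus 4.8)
The plan is to reduce everything to the pointwise statement of \Cref{prop.gentransverse} together with the elementary linear-algebra identity $\dim(A \cap B) = \dim A + \dim B - \dim(A+B)$ for linear subspaces $A, B$ of a finite-dimensional vector space. First I would fix an arbitrary irreducible component $Z$ of $X \cap Y$; since $X$ and $Y$ are generically transverse in $W$, \Cref{def.gentransverse} furnishes a point $p \in Z$ at which both $X$ and $Y$ are smooth and $T_p X + T_p Y = T_p W$. (Here I use precisely that the defining condition of generic transversality provides such a point in \emph{every} component, so the argument applies uniformly to each $Z$.)

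Next I would invoke \Cref{prop.gentransverse} at this point $p$: it guarantees that $p$ is a smooth point of $Z$ and that $T_p Z = T_p X \cap T_p Y$. The dimension count is then immediate. Since $p$ is a smooth point of $Z$, of $X$, and of $Y$, we have $\dim Z = \dim_{\mathbb C} T_p Z$, $\dim X = \dim_{\mathbb C} T_p X$, and $\dim Y = \dim_{\mathbb C} T_p Y$; and the transversality condition gives $\dim_{\mathbb C}(T_p X + T_p Y) = \dim_{\mathbb C} T_p W = \dim W$. Therefore
\[
\dim Z = \dim_{\mathbb C}(T_p X \cap T_p Y) = \dim_{\mathbb C} T_p X + \dim_{\mathbb C} T_p Y - \dim_{\mathbb C}(T_p X + T_p Y) = \dim X + \dim Y - \dim W,
\]
whence $\codim_W Z = \dim W - \dim Z = (\dim W - \dim X) + (\dim W - \dim Y) = \codim_W X + \codim_W Y$.

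Finally, since this formula holds for \emph{every} irreducible component $Z$ of $X \cap Y$ and the right-hand side does not depend on $Z$, all components of $X \cap Y$ have the same dimension, i.e. $X \cap Y$ is pure-dimensional (the case $X \cap Y = \varnothing$ being vacuous). I do not anticipate a genuine obstacle here: the corollary is a formal consequence of \Cref{prop.gentransverse}, and the only point requiring a moment's care is the justification that the sought-after smooth, transverse point $p$ can be located inside the prescribed component $Z$ — which is exactly what \Cref{def.gentransverse} (reinforced by the density statement in \Cref{rem.gentransverse}) asserts.
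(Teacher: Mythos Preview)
Your proposal is correct and is precisely the argument implicit in the paper's presentation: the corollary is stated immediately after \Cref{prop.gentransverse} (with a reference to Eisenbud--Harris) and is meant to follow from it exactly as you describe, by reading off the dimension of each component from the tangent-space identity $T_pZ = T_pX \cap T_pY$ at the good point supplied by \Cref{def.gentransverse}. There is nothing to add.
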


If $W$ is a homogeneous space under the action of some algebraic group $G$, then any given $X,Y \subset W$ can be moved into generically transverse position by virtue of Kleiman's transversality theorem:

\begin{thm} \label{thm.kleiman}
(Kleiman; see Eisenbud-Harris \cite[p. 20, Theorem 1.7(a)]{eh}.)
Suppose that an algebraic group $G$ acts transitively on a nonsingular irreducible complex algebraic variety $W$.
Let $X,Y \subset W$ be closed irreducible subvarieties.
Then there exists an open dense set of $g\in G$ such that $gX$ is generically
transverse to $Y$ in $W$.
\end{thm}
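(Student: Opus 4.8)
The statement to prove is Kleiman's transversality theorem (\Cref{thm.kleiman}): for a transitive action of an algebraic group $G$ on a nonsingular irreducible variety $W$, and closed irreducible subvarieties $X, Y \subset W$, there is an open dense set of $g \in G$ with $gX$ generically transverse to $Y$.

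The plan is to follow the classical generic smoothness argument. First I would form the incidence-type variety: consider the action map $\sigma \colon G \times X \to W$, $(g, x) \mapsto g \cdot x$, and pull back $Y$ to obtain $Z := \{(g, x) \in G \times X : g \cdot x \in Y\} = \sigma^{-1}(Y)$. The crucial observation is that because $G$ acts transitively on $W$, the map $\sigma$ is itself a surjective smooth morphism (a fiber bundle in the relevant topology: $G \times X \to W$ factors through $G \times W \to W$ after $(g,x) \mapsto (g, gx)$, and $G \to W$, $g \mapsto gx_0$, is smooth surjective since $W$ is homogeneous and nonsingular). Hence $Z \to G \times X$ followed by projection... more precisely, the restriction $\sigma|_Z \colon Z \to Y$ is also smooth and surjective, so since $Y$ is irreducible (and we may work over the smooth locus of $Y$), the fibers are equidimensional and $Z$ is irreducible of dimension $\dim Y + (\dim G + \dim X - \dim W)$.

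Next I would apply generic smoothness (Sard's theorem in the algebraic category, valid in characteristic zero) to the second projection $\pi \colon Z \to G$. There is a dense open $U_0 \subset G$ over which $\pi$ is smooth; by shrinking we may also arrange $\pi$ is smooth over $U_0$ after restricting $Z$ to the smooth locus of $Z$, and we may intersect with the dense open set of $g$ for which $gX \cap Y$ lies generically in the smooth loci of $X$, $Y$, $W$. For $g \in U_0$, the fiber $\pi^{-1}(g) \cong \{x \in X : gx \in Y\}$ is smooth, and this fiber maps isomorphically (via $x \mapsto gx$) onto $g^{-1}Y \cap X$, equivalently $gX \cap Y$ is identified with it after translating. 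Then for a point $p = gx$ in an irreducible component of $gX \cap Y$ with $x$ in the smooth fiber and $x$ a smooth point of $X$: the differential computation at $(g,x) \in Z$ — using that $Z = \sigma^{-1}(Y)$ and $d\sigma$ is surjective — shows that the tangent space to the fiber $\pi^{-1}(g)$ inside $T_x X$ is exactly $(d_x(g\cdot))^{-1}(T_{gx} Y)$, and smoothness of the fiber forces $T_x(gX) + T_p Y = T_p W$ after pushing forward by $g$. This gives transversality at $p$, and since $p$ can be chosen in each component (the smooth locus of the fiber being dense and meeting each component), we get generic transversality of $gX$ with $Y$ in the sense of \Cref{def.gentransverse}. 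Finally, openness and density of the good set $U_0$ is automatic from generic smoothness plus the openness of the smooth locus conditions.

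The main obstacle — really the only subtle point — is the differential/transversality bookkeeping: verifying that smoothness of the fiber $\pi^{-1}(g)$ at $x$ translates precisely into the tangent-space condition $T_p(gX) + T_p Y = T_p W$ at $p = gx$, and that one can simultaneously arrange $p$ to be a smooth point of $gX$, of $Y$, and of $W$. This requires carefully tracking the short exact sequences of tangent spaces coming from $Z \hookrightarrow G \times X$, the submersion $\sigma$, and the projection to $G$, and using transitivity to know $d\sigma$ is everywhere surjective. Since the paper only needs this over characteristic zero (complex varieties), generic smoothness applies without the inseparability headaches, so no further complications arise; I would cite Eisenbud–Harris \cite{eh} for the standard packaging of this argument rather than reproducing every diagram chase.
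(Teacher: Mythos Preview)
The paper does not supply its own proof of \Cref{thm.kleiman}; it simply records the statement with a citation to Eisenbud--Harris \cite[p.~20, Theorem~1.7(a)]{eh} and uses it as a black box. Your sketch is the standard generic-smoothness argument (form the incidence variety $Z=\sigma^{-1}(Y)\subset G\times X$, use transitivity to see $\sigma$ is smooth so $Z$ is well-behaved, then apply generic smoothness to the projection $Z\to G$), which is essentially the proof given in the cited reference and in Kleiman's original paper. So there is nothing to compare: you have reproduced the classical argument that the paper is content merely to cite.
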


From now on, let us assume that $W = P$ is projective of dimension $n$.
Since $P$ is a closed, oriented $2n$-dimensional real smooth manifold, there is
a Poincar\'e duality isomorphism
\[ PD: H_i (P) \longrightarrow H^{2n-i} (P), \]
which is inverse to capping with the fundamental class.
Using the cup product
\[ \cup: H^{2n-i} (P) \otimes H^{2n-j} (P) \longrightarrow H^{4n-i-j} (P) \]
on cohomology, Poincar\'e duality induces a homological intersection pairing
\[ \cdot: H_i (P) \otimes H_j (P) \to H_{i+j-2n} (P) \]
given by
\[ a\cdot b = PD^{-1} (PD(a) \cup PD (b)). \]
Moreover, any closed subvariety $Q \subset P$ of pure dimension $d$ is a closed oriented real $2d$-dimensional pseudomanifold and hence has a fundamental class $[Q]_{Q} \in H_{2d}(Q)$.
Recall that we write $[Q]_{P}$ for the image of $[Q]_{Q}$ under the map $H_{\ast}(Q) \rightarrow H_{\ast}(P)$ induced by the inclusion $Q \hookrightarrow P$.

\begin{prop}\label{lemma intersection is cap product for whitney transverse subvarieties}
Let $X,Y \subset P$ be irreducible closed subvarieties of a nonsingular irreducible complex projective algebraic variety $P$.
If $X$ and $Y$ are generically transverse in $P$, then
$$
[X]_{P} \cdot [Y]_{P} = \sum_{Z}[Z]_{P} = [X \cap Y]_{P},
$$
where the sum ranges over the finite set of irreducible components $Z$ of $X \cap Y$.
\end{prop}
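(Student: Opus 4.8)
The plan is to separate the statement into its two displayed equalities. The equality $\sum_{Z}[Z]_{P}=[X\cap Y]_{P}$ is the easier one: by \Cref{prop codimension of transverse intersection} the subvariety $X\cap Y$ is pure of complex dimension $d:=\dim X+\dim Y-n$, hence is a compact oriented real $2d$-dimensional pseudomanifold, and the fundamental class of a pure-dimensional complex variety decomposes as the sum of the fundamental classes of its (finitely many) irreducible components; pushing this identity forward along $X\cap Y\hookrightarrow P$ yields the claim. The real content is the first equality $[X]_{P}\cdot[Y]_{P}=[X\cap Y]_{P}$, and the strategy is to recognize the intersection product as the image of a canonical \emph{refined} class lying in $H_{2d}(X\cap Y)$, and then to identify that refined class with $[X\cap Y]_{X\cap Y}$ by a local computation at a general point of each component.

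For the refinement I would work with cohomology fundamental classes. Write $a=\codim_{P}X$ and $b=\codim_{P}Y$. Alexander duality for the compact subsets $X,Y$ of the closed oriented manifold $P$ furnishes canonical classes $u_{X}\in H^{2a}(P,P\setminus X)$ and $u_{Y}\in H^{2b}(P,P\setminus Y)$ whose images in $H^{\ast}(P)$ are $PD[X]_{P}$ and $PD[Y]_{P}$. The relative cup product $u_{X}\cup u_{Y}$ lies in $H^{2a+2b}(P,P\setminus(X\cap Y))$, and a further application of Alexander duality, now for the compact set $X\cap Y$ and using $2n-2a-2b=2d$, produces a class $\widetilde{\gamma}\in H_{2d}(X\cap Y)$. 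By naturality of the Alexander duality isomorphisms under cup products and under pushforward, the image of $\widetilde{\gamma}$ in $H_{2d}(P)$ is $PD^{-1}(PD[X]_{P}\cup PD[Y]_{P})=[X]_{P}\cdot[Y]_{P}$. It therefore suffices to prove $\widetilde{\gamma}=[X\cap Y]_{X\cap Y}$ inside $H_{2d}(X\cap Y)$, after which one pushes forward.

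This is where transversality enters. Since $X\cap Y$ is pure $d$-dimensional and projective, $H_{2d}(X\cap Y;\mathbb{Q})$ is spanned by the fundamental classes $[Z]$ of its irreducible components, and the coefficient along a given $Z$ is detected by the restriction $H_{2d}(X\cap Y)\to H_{2d}(X\cap Y,(X\cap Y)\setminus\{p\})\cong\mathbb{Q}$ at any smooth point $p\in Z$ that lies on no other component. I would take $p$ in the \emph{good locus} $G\subset X\cap Y$ of points at which both $X$ and $Y$ are smooth with $T_{p}X+T_{p}Y=T_{p}P$; by \Cref{def.gentransverse}, \Cref{rem.gentransverse} and \Cref{prop.gentransverse}, the set $G$ is Zariski-open, meets (hence is dense in) every irreducible component of $X\cap Y$, and consists of smooth points of $X\cap Y$ each lying on a single component. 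Near such a $p$, the varieties $X$ and $Y$ are complex submanifolds of $P$ meeting transversally, so $u_{X}$ and $u_{Y}$ restrict to the Thom classes of their complex-oriented normal bundles, $u_{X}\cup u_{Y}$ restricts to the Thom class of the complex submanifold $X\cap Y$, and under the duality isomorphism this is exactly the local orientation class of the $2d$-manifold $X\cap Y$ at $p$ — the very class one also obtains from $[X\cap Y]_{X\cap Y}$. Since this identification holds at one point of every component and these local restriction maps jointly separate the points of $H_{2d}(X\cap Y;\mathbb{Q})$ (as one checks on the spanning set $\{[Z]\}$), we conclude $\widetilde{\gamma}=[X\cap Y]_{X\cap Y}$.

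The main obstacle I anticipate is precisely this local computation: identifying the topological cup product of the two cohomology fundamental classes with the Thom class of the transverse intersection \emph{with the correct sign}. The complex orientations on all of the normal bundles involved make this sign $+1$ — it is the topological shadow of the classical fact that a generically transverse intersection of subvarieties carries intersection multiplicity one — but organizing the local bookkeeping carefully, and verifying that the lower-dimensional bad locus $(X\cap Y)\setminus G$ (of complex dimension $<d$, so with vanishing $H_{2d}$) contributes nothing, is the delicate part. A less self-contained alternative would be to invoke the multiplicity-one statement from intersection theory directly (e.g. Eisenbud–Harris), giving $[X]\cdot[Y]=\sum_{Z}[Z]$ in the Chow ring of $P$, together with the compatibility of the cycle class map $A_{\ast}(P)\to H_{2\ast}(P;\mathbb{Q})$ with intersection products and with fundamental classes.
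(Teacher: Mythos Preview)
Your argument is correct. The second equality is handled identically to the paper. For the first equality, however, the paper does not carry out a direct proof: it simply cites Fulton's \emph{Young Tableaux} (Appendix B, specifically equation (9) on p.~213 and the discussion in B.3), after noting that generic transversality guarantees the intersection is \emph{proper} in Fulton's sense and that $X,Y$ meet transversally along a dense open of every component (via \Cref{def.gentransverse}, \Cref{rem.gentransverse}, \Cref{prop.gentransverse}, \Cref{prop codimension of transverse intersection}). In other words, the paper takes exactly the ``less self-contained alternative'' you mention in your final paragraph.

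Your primary route --- refining $[X]_P\cdot[Y]_P$ to a class $\widetilde\gamma\in H_{2d}(X\cap Y)$ via the relative cup product of cohomology fundamental classes and Alexander duality, then identifying $\widetilde\gamma$ with $[X\cap Y]_{X\cap Y}$ by a local Thom-class computation at a generic (hence smooth, transverse) point of each component --- is a genuine proof rather than a citation, and it is the standard topological mechanism underlying the result Fulton records. The complex orientations do force the local multiplicity to be $+1$, the bad locus has real dimension $<2d$ so contributes nothing to $H_{2d}$, and the local restriction maps at one point per component do separate the basis $\{[Z]\}$ of $H_{2d}(X\cap Y;\mathbb{Q})$, so your identification goes through. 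The trade-off is clear: the paper's proof is two sentences but outsources the work; yours is self-contained but must organize the Alexander-duality and sign bookkeeping you flag as the delicate step.
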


\begin{proof}
As for the first equation, we follow Fulton (see \cite[Appendix B, Equation (9), p. 213]{fultonyoung} and \cite[Appendix B.3, pp. 219--222]{fultonyoung}, where we work with rational instead of integral coefficients).
Note that, in Fulton's terminology, the intersection $X \cap Y$ is proper by \Cref{prop codimension of transverse intersection}, and $X$ and $Y$ meet transversely by \Cref{def.gentransverse}, \Cref{rem.gentransverse}, and \Cref{prop.gentransverse}.
The second equation is due to the fact that the fundamental class behaves additively under decomposition into irreducible components.
Recall that the fundamental class of an oriented closed pseudomanifold is the sum the oriented top-dimensional simplices of a triangulation.
\end{proof}

\section{Preliminaries on Schubert Varieties}
In this section, we recall preliminaries on Schubert varieties and fix notation.

\subsection{Partitions}
For integers $m, k \geq 0$, let
$$
\mathcal{P}(m, k) = \{a = (a_{1}, \dots, a_{k}) \in \mathbb{Z}^{k} | \; m \geq a_{1} \geq \dots \geq a_{k} \geq 0\}.
$$
Thus, an element $a \in \mathcal{P}(m, k)$ can be considered as a partition of the nonnegative integer $|a| := a_{1} + \dots + a_{k}$.
Note that we have $\mathcal{P}(m, 0) = \{(\varnothing)\}$, where $(\varnothing)$ denotes the empty partition.
For $a, b \in \mathcal{P}(m, k)$, we will write $b \leq a$ if $b_{i} \leq a_{i}$ for all $i$.
Let $[m \times k] \in \mathcal{P}(m, k)$ denote the partition $[m \times k] = a = (a_{1}, \dots, a_{k})$ given by $m = a_{1} = \dots = a_{k}$.
For $m \leq m'$ and $k \leq k'$, extension by zero yields natural maps $\mathcal{P}(m, k) \rightarrow \mathcal{P}(m', k')$ assigning to $a = (a_{1}, \dots, a_{k}) \in \mathcal{P}(m, k)$ the partition $a' = (a_{1}', \dots, a_{k'}') \in \mathcal{P}(m', k')$ given by $a_{i}' = a_{i}$ for $1 \leq i \leq k$ and $a_{i}' = 0$ for $i > k$.
Let $[m \times k]_{m', k'} \in \mathcal{P}(m', k')$ denote the image of $[m \times k] \in \mathcal{P}(m, k)$ under the natural map $\mathcal{P}(m, k) \rightarrow \mathcal{P}(m', k')$.

We may represent a partition $a = (a_{1}, \dots, a_{k}) \in \mathcal{P}(m, k)$ by an
(upside-down) Young diagram $D_{a}$. It consists of left-justified vertically stacked rows
of boxes. There is a row for each positive $a_i$. The bottom row consists of $a_1$ boxes,
the row above it of $a_2$ boxes, and so on.

\begin{example}\label{example partitions and young diagrams}
The partition $a=(6,6,4,4,4,2,1,1, 0, 0) \in \mathcal{P}(8, 10)$ has diagram $D_{a}$ given by
\begin{center}
\ydiagram{1,1,2,4,4,4,6,6}
\end{center}
\end{example}

\subsection{Flags}
Let $V$ be a complex $n$-dimensional vector space.
A \emph{(complete) flag $F_{\ast}$ in $V$} is a sequence of linear subspaces
$$
\{0\} = F_{0} \subset F_{1} \subset \dots \subset F_{n-1} \subset F_{n} = V
$$
such that $\operatorname{dim}_{\mathbb{C}}F_{i} = i$ for all $i$.
Let $GL(V)$ denote the general linear group of $V$.
For $V = \mathbb{C}^{n}$, we also write $GL_{n}(\mathbb{C}) = GL(\mathbb{C}^{n})$.
Given a flag $F_{\ast}$ in $V$ and $g \in GL(V)$, we obtain a flag $g \cdot F_{\ast}$ in $V$ given by the sequence of subspaces
$$
\{0\} = g \cdot F_{0} \subset g \cdot F_{1} \subset \dots \subset g \cdot F_{n-1} \subset g \cdot F_{n} = V.
$$
Conversely, any two flags $F_{\ast}$ and $F_{\ast}'$ in $V$ are related by $F_{\ast}' = g \cdot  F_{\ast}$ for a suitable $g \in GL(V)$.
Since $GL(V)$ is path connected, there is thus a continuous family of linear automorphisms of $V$ taking $F_{\ast}$ to $F_{\ast}'$.

\subsection{Schubert Varieties}
For integers $0 \leq k \leq n$, let $G = G_{k}(V)$ denote the Grassmann variety of $k$-dimensional linear subspaces of an $n$-dimensional complex vector space $V$.
This is a nonsingular complex projective algebraic variety of dimension $\dim_\cplx G = k(n-k)$.
For a partition $a \in \mathcal{P}(n-k, k)$ and a flag $F_{\ast}$ in $V$, the \emph{Schubert variety}
\begin{align*}
X_{a}(F_{\ast}) = \{P \in G| \operatorname{dim}_{\mathbb{C}}(P \cap F_{a_{k+1-i}+i}) \geq i, \; 1 \leq i \leq k\} \subset G
\end{align*}
is a closed subvariety of $G$ of complex dimension $|a| = a_{1} + \dots + a_{k}$ (see e.g. \cite{aluffi}).
If the flag $F_{\ast}$ is understood, we will also write $X_{a} = X_{a}(F_{\ast})$.
Grassmannians are themselves Schubert varieties as we have $G = X_{[(n-k) \times k]}$.
For any partition $b \in \mathcal{P}(n-k, k)$ with $b\leq a$, we have a closed embedding $X_b \subset X_a$.

The Grassmannian $G$ comes with a transitive action of $GL(V)$.
Note that we have $g \cdot X_{a}(F_{\ast}) = X_{a}(g \cdot F_{\ast})$ for all $g \in GL(V)$.
Since any two flags in $V$ are related by a continuous family of linear automorphisms of $V$, any Schubert variety $X_{a}(F_{\ast}')$ that is associated to another flag $F_{\ast}'$ in $V$ is a translate of the original variety $X_{a}(F_{\ast})$ by an isotopy of $G$.
In particular, the ambient fundamental class $[X_{a}]_{G} \in H_{\ast}(G; \mathbb{Z})$ is well-defined without specifying a flag.

If only the Young diagram of the partition $a$ (but not $a$ itself) are known, then the diagram determines
Schubert varieties in every Grassmannian $G_k (V)$ with
$k$ at least as large as the number of rows and $n-k \geq a_1$.
It turns out that all these Schubert varieties are isomorphic to each other.
Thus we may speak of \emph{the} Schubert variety associated to a Young diagram.
The dimension of $X_{a}$ is the number of boxes in the diagram of $a$.

\subsection{The Homology of Schubert Varieties}
The Chow homology $A_* (X_a)$ of a Schubert variety $X_a$
is freely generated by the Schubert classes $[X_b]$ for all
$b\leq a$ (see \cite[p. 7]{aluffi}).
For any complex variety $X$, let 
\[ \cl: A_i (X) \longrightarrow H^\BM_{2i} (X;\intg) \]
denote the cycle map from Chow homology to Borel-Moore homology.
By \cite[p. 378, Example 19.1.11]{fultonintth}, the cycle map is an isomorphism 
for varieties $X$ that possess a cellular decomposition
in the sense of Fulton \cite[p. 23, Example 1.9.1]{fultonintth}.
Now using Schubert cells, Schubert varieties do indeed have a cellular decomposition
in this sense.
Therefore,
\[ \cl: A_i (X_a) \cong H_{2i} (X_a;\intg) \]
for Schubert varieties $X_a$.
(Note that Schubert varieties are compact, and thus their ordinary singular homology
agrees with Borel-Moore homology.)
Thus $H_{2i} (X_a; \intg)$ is a free abelian group generated by all fundamental classes
$[X_b]$ of Schubert varieties $X_b$ with $b\leq a$ and $\dim X_b = \sum b_j = i$.
The homology $H_{2i+1} (X_a;\intg)$ in odd degrees vanishes.
All of this applies of course to $H_* (G;\intg)$ itself, since the
Grassmannian $G = G_{k}(V)$ is a particular Schubert variety.
If $X_a$ is a Schubert variety in $G$, then the closed embedding
$j: X_a \hookrightarrow G$
induces a map
\[ j_*: H_* (X_a;\intg) \longrightarrow H_* (G;\intg). \]
In consistency with our earlier notation on fundamental classes, we shall write $[X_b]_{X_a} \in H_* (X_a;\intg)$ for the homology class which $X_b$ defines in $X_a$ for a partition $b \leq a$, and
$[X_b]_G \in H_* (G;\intg)$ for the homology class of $X_b$
in the Grassmannian, that is,
\[ [X_b]_G = j_* [X_b]_{X_a}.  \]
Note that the map $j_*$ is always a monomorphism, since
$b\leq a$ and $a\leq (n-k, \ldots, n-k)$ implies
$b\leq (n-k, \ldots, n-k)$, so $X_b$ defines a homology generator
for $H_* (G; \mathbb{Z})$.

\subsection{The Singular Set of a Schubert Variety}
\label{sec.singset}
We compute the singular set of a Schubert variety $X_a$ using
a result of Lakshmibai-Weyman
\cite[Theorem 5.3, p. 203]{lakwey}.
Given a partition $a \in \mathcal{P}(m, k)$ with $a_{k} > 0$, write $a$ as
\[ a = (\underbrace{p_1, \ldots, p_1}_{q_1}, 
        \underbrace{p_2,\ldots, p_2}_{q_2},\ldots, 
        \underbrace{p_r, \ldots, p_r}_{q_r}) \]
with $p_1 > p_2 > \cdots > p_r$.
Thus the Young diagram of $X_a$ consists of a bottom rectangle with $q_1$ rows
of length $p_1$, then a rectangle with $q_2$ rows of length $p_2$, etc.

\begin{example}  \label{exple.a66444211}
For $a=(6,6,4,4,4,2,1,1)$ with the Young diagram of \Cref{example partitions and young diagrams}, we have
\[ p_1 = 6, q_1 =2;~ p_2 = 4, q_2 = 3;~ p_3 =2, q_3 =1;~ p_4 =1, q_4 =2. \]
The diagram contains $4$ rectangles, so $r=4$.
\end{example}
With this notation, the theorem asserts:
\begin{thm} (Lakshmibai, Weyman.) \label{thm.lakwey}
The singular set of $X_a$ is a union of $r-1$ Schubert-subvarieties given by
\[ \Sing X_a =  X_{a^{(1)}} \cup \cdots \cup X_{a^{(r-1)}}, \]
where the partition $a^{(i)}$ is given by
\[ \begin{array}{rll}
a^{(i)} =(& p_1, \ldots, p_1 & (q_1 \text{ entries}) \\
 & \vdots & \\ 
 & p_{i-1}, \ldots, p_{i-1} & (q_{i-1} \text{ entries}) \\
 & p_{i}, \ldots, p_{i} & (q_i -1 \text{ entries}) \\
 & p_{i+1} -1, \ldots, p_{i+1} -1 & (q_{i+1} +1 \text{ entries}) \\
 & p_{i+2}, \ldots, p_{i+2} & (q_{i+2} \text{ entries}) \\
 & \vdots & \\ 
 & p_{r}, \ldots, p_{r}) & (q_r \text{ entries}) \\
\end{array} \]
In particular, $X_a$ is nonsingular if and only if $a$ is a rectangular partition.
(In the latter case, $r=1$.)
\end{thm}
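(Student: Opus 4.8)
The plan is to reduce the computation of $\Sing X_a$ to a question about the torus-fixed points of $X_a$, convert that into a combinatorial count of tangent directions, and finally read off the maximal singular fixed points as the claimed Schubert subvarieties. Fix the flag $F_\ast$ defining $X_a$, let $B \subset GL(V)$ be the Borel subgroup stabilising it, and $T \subset B$ a maximal torus. Since $X_a = \overline{C_a}$ is the closure of a single $B$-orbit it is $B$-stable, hence so is its singular locus, and a closed $B$-stable subset of $G = G_k(V)$ is a union of Schubert varieties. Moreover $B$ acts on $X_a$ by automorphisms, so if a $T$-fixed point $e_b$ (for a partition $b\le a$) lies in $\Sing X_a$, then so does the whole orbit $C_b = B\cdot e_b$ and hence its closure $X_b$; conversely $X_b\subseteq\Sing X_a$ forces $e_b\in\Sing X_a$. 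Therefore $\Sing X_a = \bigcup\{X_b : e_b\in\Sing X_a\}$, and since $\{b\le a : e_b\in\Sing X_a\}$ is downward closed for the order $\le$ it is determined by its maximal elements; so it suffices to decide for each $b\le a$ whether $e_b$ is a singular point of $X_a$ and to identify the maximal such $b$.

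For this, work in the standard affine chart of $G$ around $e_b$: writing $I_b\subset\{1,\dots,n\}$ for the $k$-element subset with $e_b = [\langle e_s : s\in I_b\rangle]$ and $P_{I_b}=\langle e_s : s\in I_b\rangle$, the chart is $\Hom(P_{I_b},V/P_{I_b})\cong\mathbb{A}^{k(n-k)}$ with coordinates $x_{st}$ indexed by pairs $(s,t)$ with $s\in I_b$, $t\notin I_b$. In this chart $X_a$ is the common zero locus of the Plücker coordinates $p_J$ with $\lambda(J)\not\le a$ (rank conditions on submatrices of the generic matrix $(x_{st})$), and examining their linear parts shows that the Zariski tangent space $T_{e_b}X_a$ is spanned by exactly those directions $\partial_{x_{st}}$ whose ``swap'' $I_b^{s\to t} := (I_b\setminus\{s\})\cup\{t\}$ still has $\lambda(I_b^{s\to t})\le a$. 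Let $N(b)$ be the number of these admissible pairs, so $\dim T_{e_b}X_a = N(b)$. Since $X_a$ is irreducible of dimension $|a|$ we have $N(b)\ge|a|$ with equality exactly when $e_b$ is a smooth point; hence $e_b\in\Sing X_a$ if and only if $N(b)>|a|$.

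It then remains to prove the combinatorial criterion: $N(b)>|a|$ holds if and only if $b_{Q_i}\le p_{i+1}-1$ for some interior corner index $i\in\{1,\dots,r-1\}$, where $Q_i := q_1+\dots+q_i$. Granting this, the set of singular $b$ is the union of the finitely many join-closed ideals $\{b\le a : b_{Q_i}\le p_{i+1}-1\}$; one checks directly that their maxima $a^{(i)} = \max\{b\le a : b_{Q_i}\le p_{i+1}-1\}$ are precisely the partitions displayed in the statement, and that each $a^{(i)}$ satisfies the defining inequality. Passing to maximal elements yields $\Sing X_a = X_{a^{(1)}}\cup\dots\cup X_{a^{(r-1)}}$; if $a$ is rectangular ($r=1$) there are no interior corners, the union is empty, and $X_a$ is smooth.

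Proving the criterion is the crux and the step I expect to demand real work. The admissible pairs split into those whose associated box of the ambient $k\times(n-k)$ rectangle lies in $D_b$ — these always contribute, since such a swap shrinks the diagram — and those whose box lies outside $D_b$, which contribute exactly when the ribbon (border strip) adjoined to $D_b$ by the swap stays inside $D_a$; one must show the number of the latter equals $|a|-|b|$ except when $D_b$ lies strictly below one of the $r-1$ interior corners of $D_a$, where the length drop $p_i>p_{i+1}$ on the two sides of that corner creates extra admissible outward swaps and pushes $N(b)$ past $|a|$. This bookkeeping is elementary but delicate. One can also bypass it: either invoke the known description of tangent spaces at $T$-fixed points for minuscule $G/P$, or use the local structure of Grassmannian Schubert varieties — along the open cell of $X_{a^{(i)}}$ the variety $X_a$ is, up to a smooth factor, a cone over a (Segre product of) Grassmannian(s), which is positive-dimensional precisely because $p_i>p_{i+1}$, hence singular at its vertex, exhibiting the singular direction directly, while at the remaining cells the analogous local model is smooth.
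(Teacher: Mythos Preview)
The paper does not actually prove this theorem; it is quoted from Lakshmibai--Weyman \cite{lakwey} and used as a black box. So there is no ``paper's own proof'' to compare against, and your proposal should be judged on its own merits.

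Your overall strategy is sound and is one of the standard routes to this result: reduce to $T$-fixed points via $B$-stability of the singular locus, compute $\dim T_{e_b}X_a$ combinatorially, and identify the maximal $b$ with $\dim T_{e_b}X_a > |a|$. Your verification that $a^{(i)} = \max\{b\le a : b_{Q_i}\le p_{i+1}-1\}$ is correct. Two remarks are in order. First, the description of $T_{e_b}X_a$ as spanned by the swap directions with $\lambda(I_b^{s\to t})\le a$ is true, but the justification ``examining their linear parts'' tacitly uses that the Pl\"ucker coordinates (equivalently, the rank conditions) generate the \emph{ideal} of $X_a$ at $e_b$, not merely cut it out set-theoretically; this is a genuine input (standard monomial theory, or the Hodge basis argument) and should be cited rather than glossed. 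Second, the step you flag as the crux---the count showing that the number of outward admissible swaps equals $|a|-|b|$ unless $D_b$ misses an interior corner---is indeed the whole content of the theorem, and as written your argument stops short of proving it. Your sketch of the bookkeeping is plausible, and the alternative via the local product structure (slice transverse to a Schubert cell to get a cone on a Segre product) is a clean way to finish.

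For context, the original Lakshmibai--Weyman argument proceeds differently: they compute the \emph{multiplicity} of $X_a$ at $e_b$ (via standard monomial theory for minuscule $G/P$) and use that smoothness is equivalent to multiplicity one. Your tangent-space approach is equally valid for Grassmannians and arguably more elementary, but the combinatorial identity you need is essentially equivalent in difficulty to their multiplicity formula.
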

Pictorially, this means that 
in order to obtain the Young diagram $D_{a^{(i)}}$, one must delete one row from the $i$-th rectangle in the diagram $D_{a}$,
delete from the rectangle above it the rightmost column, and, 
after having done this, add one more row to that rectangle.

\begin{example}
We return to the partition $a=(6,6,4,4,4,2,1,1)$ considered in \Cref{exple.a66444211}.
The component $X_{a^{(1)}}$ ($i=1$) of the singular set of the Schubert variety $X_a$ has Young diagram $D_{a^{(1)}}$ obtained by deleting the following L-shaped part marked in yellow in the diagram $D_{a}$:
\begin{center}
\ydiagram[*(yellow)]{0,0,0,3+1,3+1,3+1,3+3,0}
*[*(white)]{1,1,2,4,4,4,6,6} 
\end{center}
\end{example}

\section{Intersection Theory of Schubert Cycles}
\label{Intersection of Schubert varieties}
In this section, we employ transversality of flags (see \Cref{Transversality of flags}) and the Segre product of Grassmannians (see \Cref{Segre products in ambient Grassmannians}) to study physical intersections of Schubert cycles in an ambient Grassmannian $G$, and deduce formulae for intersection products of Schubert classes in the homology of $G$.

\subsection{Transversality of Flags}\label{Transversality of flags}
We adopt the following useful notion of transversality for flags in an $n$-dimensional complex vector space $V$.

\begin{defn}\label{definition transverse flags}
(See Eisenbud-Harris \cite[Definition 4.4, p. 139]{eh}.)
We say that two flags $F_{\ast}$ and $F_{\ast}'$ in $V$ are \emph{transverse} if any of the following equivalent statements holds:
\begin{enumerate}
\item[(a)] $F_{i} \cap F_{n-i}' = \{0\}$ for all $i$.
\item[(b)] $\operatorname{dim}_{\mathbb{C}}(F_{i} \cap F_{j}') = \operatorname{max} \{i+j-n, 0\}$ for all $i, j$.
\item[(c)] There exists a basis $v_{1}, \dots, v_{n}$ of $V$ such that $F_i = \langle v_1,\ldots, v_i \rangle$ and $F_i' = \langle v_{n+1-i},\ldots, v_n \rangle$ for all $i$.
\end{enumerate}
\end{defn}

\begin{prop}\label{proposition transversality of flags is open and dense condition}
If $F_{\ast}$ and $F_{\ast}'$ are transverse flags in $\mathbb{C}^{n}$, then the subset $U \subset GL_{n}(\cplx)$ of all $g \in GL_{n}(\cplx)$ such that $F_{\ast}$ and $g \cdot F_{\ast}'$ are transverse is a Zariski open neighborhood of $\operatorname{id}_{\mathbb{C}^{n}} \in GL_{n}(\cplx)$.
\end{prop}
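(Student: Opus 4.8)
The plan is to show that the set of $g$ making $F_\ast$ and $g\cdot F_\ast'$ transverse is the non-vanishing locus of a finite collection of polynomial functions on $\GL_n(\cplx)$, and that each such polynomial is non-zero at $g = \id$ precisely because $F_\ast$ and $F_\ast'$ are already transverse. Openness and the fact that $\id$ lies in the set then follow immediately; density is automatic since $\GL_n(\cplx)$ is irreducible, though for the present statement only the open-neighborhood claim is needed.

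First I would use criterion (b) of \Cref{definition transverse flags}: $F_\ast$ and $g\cdot F_\ast'$ are transverse iff $\dim_\cplx(F_i \cap g\cdot F_j') = \max\{i+j-n,0\}$ for all $i,j$. Since $\dim(F_i \cap g\cdot F_j') \geq \max\{i+j-n,0\}$ always holds, transversality is equivalent to the opposite inequalities $\dim(F_i \cap g\cdot F_j') \leq \max\{i+j-n,0\}$ for all $i,j$; and in fact, by a standard reduction, it suffices to impose $F_i \cap g\cdot F_{n-i}' = \{0\}$ for all $i$, i.e.\ criterion (a). So I would fix a basis realizing $F_\ast$ and a basis realizing $F_\ast'$, and for each $i$ consider the linear map $F_i \oplus F_{n-i}' \to V$, $(x,y) \mapsto x + g\cdot y$ (where $g$ acts via the second basis). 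This is a map between spaces of equal dimension $n$, and $F_i \cap g\cdot F_{n-i}' = \{0\}$ holds iff this map is an isomorphism, iff its determinant $D_i(g)$ is non-zero. Each $D_i$ is a regular (polynomial) function on $\GL_n(\cplx)$, since $g \mapsto g\cdot y$ is linear in the entries of $g$.

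Next I would set $U = \{g \in \GL_n(\cplx) : D_i(g) \neq 0 \text{ for all } i = 0,\dots,n\}$, which is Zariski open as the complement of the zero set of $\prod_i D_i$. By the equivalence (a)$\Leftrightarrow$(b), $U$ is exactly the set described in the statement. Finally, since $F_\ast$ and $F_\ast'$ are transverse by hypothesis, criterion (a) gives $F_i \cap F_{n-i}' = \{0\}$, i.e.\ the map $(x,y)\mapsto x+y$ is an isomorphism, so $D_i(\id) \neq 0$ for every $i$; hence $\id_{\cplx^n} \in U$, and $U$ is the desired Zariski open neighborhood.

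The only mildly delicate point — the ``main obstacle'', such as it is — is the reduction from all the conditions in criterion (b) to just the conditions in criterion (a), so that one obtains a \emph{finite} list of polynomial inequalities cleanly; but this is precisely the content of the equivalence (a)$\Leftrightarrow$(b) already recorded in \Cref{definition transverse flags}, so no real work is needed. One should also be slightly careful that $D_i$ is genuinely a polynomial and not merely a rational function on $\GL_n(\cplx)$: choosing the matrix entries of $g$ (rather than, say, entries of $g^{-1}$) as coordinates makes $g\cdot y$ linear in these coordinates, so each $D_i$ is a homogeneous polynomial of degree $\dim F_{n-i}' = n-i$ in the entries of $g$, and the argument goes through.
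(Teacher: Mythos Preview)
Your proof is correct and takes essentially the same approach as the paper's: both reduce transversality of $F_\ast$ and $g\cdot F_\ast'$ to criterion (a) and express each condition $F_i \cap g\cdot F_{n-i}' = \{0\}$ as the non-vanishing of a determinant that is polynomial in the entries of $g$. The paper writes this determinant explicitly as $\det(v_1,\dots,v_i,\, g\cdot v_1',\dots,g\cdot v_{n-i}')$ after choosing bases adapted to the two flags, which is precisely the matrix of your map $(x,y)\mapsto x+g\cdot y$ in those bases; so your $D_i(g)$ and the paper's determinants coincide.
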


\begin{proof}
Choose bases $v_{1}, \dots, v_{n}$ and $v_{1}', \dots, v_{n}'$ of $\mathbb{C}^{n} = \mathbb{C}^{n \times 1}$ such that
\[ F_i = \langle v_1,\ldots, v_i \rangle,~ i=1,\ldots, n, \]
and
\[ F_i' = \langle v_1',\ldots, v_i' \rangle,~ i=1,\ldots, n. \]
Then, for $g \in GL_{n}(\cplx)$, $F_{\ast}$ is transverse to $g \cdot F_{\ast}'$ if and only if
\begin{equation}\label{equation transverse flags algebraic locus}
\operatorname{det}(v_{1}, \dots, v_{i}, g \cdot v_{1}', \dots, g \cdot v_{n-i}') \neq 0, \quad 1 \leq i \leq n-1,
\end{equation}
which is a Zariski open condition on $g \in GL_{n}(\cplx)$.
(More precisely, $GL_{n}(\cplx)$ is the zero locus in $\cplx^{n^{2}} \times \cplx$ of the polynomial $(g, t) \mapsto t \cdot \operatorname{det}(g) - 1$, and condition (\ref{equation transverse flags algebraic locus}) is understood in terms of the polynomials $(g, t) \mapsto \operatorname{det}(v_{1}, \dots, v_{i}, g \cdot v_{1}', \dots, g \cdot v_{n-i}')$, $1 \leq i \leq n-1$.)
Consequently, $U$ is Zariski open in $GL_{n}(\cplx)$.
\end{proof}

\begin{prop}\label{proposition transformation of transverse flags}
If $(E_{\ast}, E_{\ast}')$ and $(F_{\ast}, F_{\ast}')$ are pairs of transverse flags in $V$, then there exists $g \in GL(V)$ such that $g \cdot E_{\ast} = F_{\ast}$ and $g \cdot E_{\ast}' = F_{\ast}'$.
\end{prop}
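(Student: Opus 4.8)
The plan is to reduce the whole statement to the basis characterization (c) of transversality in \Cref{definition transverse flags}. First I would apply the equivalence of conditions (a), (b), (c) in that definition to the pair $(E_{\ast}, E_{\ast}')$: since these flags are transverse, there is a basis $v_{1}, \dots, v_{n}$ of $V$ with $E_{i} = \langle v_{1}, \dots, v_{i}\rangle$ and $E_{i}' = \langle v_{n+1-i}, \dots, v_{n}\rangle$ for all $i$. Then I would do the same for $(F_{\ast}, F_{\ast}')$, obtaining a basis $w_{1}, \dots, w_{n}$ of $V$ with $F_{i} = \langle w_{1}, \dots, w_{i}\rangle$ and $F_{i}' = \langle w_{n+1-i}, \dots, w_{n}\rangle$ for all $i$.

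Next I would define $g \in GL(V)$ to be the unique linear automorphism determined by $g(v_{i}) = w_{i}$ for $i = 1, \dots, n$; this is well defined and invertible precisely because both $n$-tuples are bases of $V$ (this is where transversality enters). Since $g$ carries $\langle v_{1}, \dots, v_{i}\rangle$ onto $\langle w_{1}, \dots, w_{i}\rangle$, we obtain $g \cdot E_{i} = F_{i}$ for every $i$, hence $g \cdot E_{\ast} = F_{\ast}$. Likewise $g$ carries $\langle v_{n+1-i}, \dots, v_{n}\rangle$ onto $\langle w_{n+1-i}, \dots, w_{n}\rangle$, so $g \cdot E_{i}' = F_{i}'$ for every $i$, hence $g \cdot E_{\ast}' = F_{\ast}'$, which completes the argument.

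There is essentially no obstacle here: the entire content of the proposition is packaged inside the equivalence $(a) \Leftrightarrow (c)$ quoted from Eisenbud-Harris, namely that a \emph{single} basis simultaneously ``straightens'' both flags of a transverse pair. Once that normal form is available, transporting one adapted basis to the other by a linear map automatically drags along all of the flag data, and the fact that the adapted tuples are bases (rather than merely spanning tuples) is exactly what guarantees $g$ lies in $GL(V)$ rather than just in the endomorphism monoid of $V$. The only point that warrants a line of care is matching the index conventions in (c) with the two flags $E_{\ast}, E_{\ast}'$ (respectively $F_{\ast}, F_{\ast}'$), so that the ``descending'' basis $v_{n+1-i}, \dots, v_{n}$ really does span $E_{i}'$.
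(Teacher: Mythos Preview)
Your proof is correct and follows exactly the same approach as the paper: apply characterization (c) of \Cref{definition transverse flags} to each transverse pair to obtain adapted bases $v_1,\dots,v_n$ and $w_1,\dots,w_n$, then define $g$ by $g(v_i)=w_i$. The paper's argument is in fact slightly terser than yours, omitting the verification you spell out in the second paragraph.
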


\begin{proof}
Using statment (c) of \Cref{definition transverse flags}, we find bases $v_{1}, \dots, v_{n}$ and $w_{1}, \dots, w_{n}$ of $V$ such that $E_i = \langle v_1,\ldots, v_i \rangle$ and $E_i' = \langle v_{n+1-i},\ldots, v_n \rangle$ for all $i$, and $F_i = \langle w_1,\ldots, w_i \rangle$ and $F_i' = \langle w_{n+1-i},\ldots, w_n \rangle$ for all $i$.
Let $g \in GL(V)$ be defined by $g \cdot v_{i} = w_{i}$ for all $i$.
Then, as desired, we have $g \cdot E_{i} = F_{i}$ and $g \cdot E_{i}' = F_{i}'$ for all $i$.
\end{proof}

\begin{prop}\label{proposition existence of flag with transversality properties}
Let $0 \leq k \leq n$ be integers, and let $a \in \mathcal{P}(n-k, k)$.
Let $V$ be a complex $n$-dimensional vector space, and let $E_{\ast}$ be a flag in $V$.
Let $X \subset G$ be a closed irreducible subvariety of the Grassmannian $G := G_{k}(V)$.
Then, there exists a flag $F_{\ast}$ in $V$ that is transverse to $E_{\ast}$, and such that the Schubert subvariety $X_{a}(F_{\ast}) \subset G$ is simultaneously Whitney transverse and generically transverse to $X \subset G$.
\end{prop}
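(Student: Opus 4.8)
The plan is to fix an auxiliary flag $F_{\ast}^{0}$ transverse to $E_{\ast}$, form the associated Schubert variety $X_{a}(F_{\ast}^{0}) \subset G$, and then translate it by a suitable $g \in GL(V)$, chosen inside a Zariski-open neighborhood of the identity so that transversality with $E_{\ast}$ is preserved, while arranging at the same time that the translate is both generically transverse and Whitney transverse to $X$. The output flag will be $F_{\ast} := g \cdot F_{\ast}^{0}$, so that $X_{a}(F_{\ast}) = X_{a}(g \cdot F_{\ast}^{0}) = g \cdot X_{a}(F_{\ast}^{0})$.

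First I would construct $F_{\ast}^{0}$: picking a basis $v_{1}, \dots, v_{n}$ of $V$ with $E_{i} = \langle v_{1}, \dots, v_{i}\rangle$ and setting $F_{i}^{0} = \langle v_{n+1-i}, \dots, v_{n}\rangle$ yields, by criterion (c) of \Cref{definition transverse flags}, a flag $F_{\ast}^{0}$ transverse to $E_{\ast}$. The Schubert variety $X_{a}(F_{\ast}^{0}) \subset G$ is a closed irreducible (hence pure-dimensional) and compact subvariety. Fix a Whitney stratification of $X_{a}(F_{\ast}^{0})$ and a Whitney stratification of $X$. I then consider three subsets of $GL(V)$: the set $U_{1}$ of all $g$ with $g \cdot F_{\ast}^{0}$ transverse to $E_{\ast}$; the set $U_{2}$ of all $g$ with $g \cdot X_{a}(F_{\ast}^{0})$ generically transverse to $X$ in $G$; and the set $U_{3}$ of all $g$ such that $g \cdot X_{a}(F_{\ast}^{0})$, carrying the pushforward of the chosen stratification, is transverse in the Whitney sense to the chosen stratification of $X$. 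By \Cref{proposition transversality of flags is open and dense condition} (transported to $V$ via a choice of basis, applied to the transverse pair $E_{\ast}, F_{\ast}^{0}$), the set $U_{1}$ is a nonempty Zariski-open subset of $GL(V)$. By Kleiman's theorem \Cref{thm.kleiman}, applied to the transitive action of the algebraic group $GL(V)$ on the nonsingular irreducible variety $G$ and to the irreducible subvarieties $X_{a}(F_{\ast}^{0})$ and $X$, the set $U_{2}$ is Zariski-open and dense. By the topological Kleiman theorem \Cref{thm kleiman for whitney transversality}, applied to the action of the Lie group $GL(V)$ on the smooth manifold $G$ with $Z_{1} = X_{a}(F_{\ast}^{0})$ and $Z_{2} = X$, the set $U_{3}$ is dense in $GL(V)$ for the classical topology (and open as well, since $X_{a}(F_{\ast}^{0})$ is compact, though only density is needed).

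Now I combine these sets. Since $GL(V)$ is an irreducible variety, the nonempty Zariski-open set $U_{1} \cap U_{2}$ is also nonempty and open in the classical topology; as $U_{3}$ is classically dense, it meets $U_{1} \cap U_{2}$, so there exists $g \in U_{1} \cap U_{2} \cap U_{3}$. Setting $F_{\ast} := g \cdot F_{\ast}^{0}$, we conclude: $F_{\ast}$ is transverse to $E_{\ast}$ because $g \in U_{1}$; the Schubert variety $X_{a}(F_{\ast}) = g \cdot X_{a}(F_{\ast}^{0})$ is generically transverse to $X$ because $g \in U_{2}$ (both being irreducible closed subvarieties, as \Cref{def.gentransverse} requires); and $X_{a}(F_{\ast}) = g \cdot X_{a}(F_{\ast}^{0})$ is Whitney transverse to $X$ because $g \in U_{3}$, the pushforward under the diffeomorphism $g$ of the chosen Whitney stratification of $X_{a}(F_{\ast}^{0})$ being a Whitney stratification of $X_{a}(F_{\ast})$ transverse to the chosen stratification of $X$. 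This proves the proposition.

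I expect the only genuine subtlety to be this last step, where conditions of two different natures must be reconciled: $U_{1}$ and $U_{2}$ arise from algebraic (Kleiman-type) statements and are Zariski-open, whereas $U_{3}$ is only supplied as a dense (and classically open) subset by the topological Kleiman theorem. The point that makes the intersection nonempty is the standard fact that a nonempty Zariski-open subset of an irreducible variety is dense in the classical topology, so $U_{1} \cap U_{2}$ remains a nonempty classically open set which $U_{3}$ must meet. A minor but necessary point to record is that translating a Whitney stratification by the diffeomorphism $g$ of $G$ again gives a Whitney stratification of the translated Schubert variety, so that $g \in U_{3}$ genuinely certifies Whitney transversality of $X_{a}(F_{\ast})$ in the sense of Section 2.
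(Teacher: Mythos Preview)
Your proof is correct and follows essentially the same approach as the paper's: fix an auxiliary flag transverse to $E_{\ast}$, then use the three transversality results (\Cref{proposition transversality of flags is open and dense condition}, \Cref{thm.kleiman}, \Cref{thm kleiman for whitney transversality}) to produce three subsets of $GL(V)$ whose intersection is nonempty, and translate by an element of that intersection. The paper phrases the final step slightly differently---observing that all three sets are open and dense in the complex topology (citing \cite{mumfordred} for the Zariski-to-complex density), so their intersection is dense---but your argument that the nonempty Zariski-open $U_{1} \cap U_{2}$ must meet the classically dense $U_{3}$ is equally valid.
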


\begin{proof}
Let $E_{\ast}'$ be a flag in $V$ that is transverse to $E_{\ast}$.
By \Cref{proposition transversality of flags is open and dense condition}, there is an open dense subset $U \subset GL(V)$ (in the Zariski topology) such that the flags $E_{\ast}$ and $g \cdot E_{\ast}'$ are transverse in $V$ for $g \in U$.
Fix Whitney stratifications $\mathcal{W}$ and $\mathcal{W}'$ on $X \subset G$ and $X_{a}(E_{\ast}') \subset G$, respectively.
By a version of the Kleiman transversality theorem (see \Cref{thm kleiman for whitney transversality}), there is an open dense subset $U' \subset GL(V)$ (in the complex topology) such that $X$ (Whitney stratified by $\mathcal{W}$) and $g \cdot X_{a}(E_{\ast}')$ (Whitney stratified by $g \cdot \mathcal{W}'$) are transverse in $G$ for $g \in U'$.
Moreover, by Kleiman's transversality theorem (see \Cref{thm.kleiman}), there is an open dense subset $U'' \subset GL(V)$ (in the Zariski topology) such that $X$ and $g \cdot X_{a}(E_{\ast}')$ are generically transverse in $G$ for $g \in U''$.
By \cite[Theorem 1, p. 58]{mumfordred}, the open subsets $U, U'' \subset GL(V)$ are also dense in the complex topology.
Therefore, in the complex topology on $GL(V)$, the intersection $U \cap U' \cap U''$ is a dense subset of $GL(V)$, and hence nonempty.
Fix $g \in U \cap U' \cap U''$.
Then, $F_{\ast} := g \cdot E_{\ast}'$ is a flag in $V$ with the desired properties because $g \cdot X_{a}(E_{\ast}') = X_{a}(g \cdot E_{\ast}') = X_{a}(F_{\ast})$.
\end{proof}

\begin{cor}\label{proposition transverse flags imply whitney transverse}
Let $0 \leq k \leq n$ be integers.
Let $V$ be a complex $n$-dimensional vector space, and let $a, b \in \mathcal{P}(n-k, k)$.
If $F_{\ast}$ and $F_{\ast}'$ are transverse flags in $V$, then $X_{a}(F_{\ast})$ and $X_{b}(F_{\ast}')$ are simultaneously Whitney transverse and generically transverse in $G_{k}(V)$.
\end{cor}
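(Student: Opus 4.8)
The plan is to reduce the assertion — which quantifies over \emph{all} pairs of transverse flags — to the existence statement of \Cref{proposition existence of flag with transversality properties}, by exploiting the fact that $GL(V)$ acts transitively on pairs of transverse flags (\Cref{proposition transformation of transverse flags}) together with the invariance of both notions of transversality under the algebraic automorphisms of $G = G_{k}(V)$ induced by $GL(V)$.

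Concretely, I would first apply \Cref{proposition existence of flag with transversality properties} to the flag $E_{\ast} := F_{\ast}$, the partition $b \in \mathcal{P}(n-k,k)$, and the closed irreducible subvariety $X := X_{a}(F_{\ast}) \subset G$ (Schubert varieties being irreducible). This yields a flag $H_{\ast}$ in $V$ that is transverse to $F_{\ast}$ and such that $X_{b}(H_{\ast})$ is simultaneously Whitney transverse and generically transverse to $X_{a}(F_{\ast})$ in $G$. Now $(F_{\ast}, H_{\ast})$ and $(F_{\ast}, F_{\ast}')$ are both pairs of transverse flags, so by \Cref{proposition transformation of transverse flags} there is $g \in GL(V)$ with $g \cdot F_{\ast} = F_{\ast}$ and $g \cdot H_{\ast} = F_{\ast}'$. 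Using $g \cdot X_{c}(L_{\ast}) = X_{c}(g \cdot L_{\ast})$ for any partition $c$ and flag $L_{\ast}$, this gives $g \cdot X_{a}(F_{\ast}) = X_{a}(F_{\ast})$ and $g \cdot X_{b}(H_{\ast}) = X_{b}(F_{\ast}')$, so that $g$ carries the pair $(X_{a}(F_{\ast}), X_{b}(H_{\ast}))$ to the pair $(X_{a}(F_{\ast}), X_{b}(F_{\ast}'))$.

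It then remains to observe that $g$ acts on $G$ as a biregular automorphism of a nonsingular projective variety, hence in particular as a diffeomorphism: as such it takes Whitney stratifications to Whitney stratifications and preserves transversality of strata, so it preserves Whitney transversality of subvarieties; and since it is algebraic it maps smooth points to smooth points and tangent spaces isomorphically onto tangent spaces, so it preserves generic transversality in the sense of \Cref{def.gentransverse}. Transporting the conclusion for $(X_{a}(F_{\ast}), X_{b}(H_{\ast}))$ along $g$ therefore proves the claim for $(X_{a}(F_{\ast}), X_{b}(F_{\ast}'))$. The only substantive step is the reduction in the second paragraph; the anticipated obstacle is merely the (routine) verification that both flavors of transversality are invariant under the $GL(V)$-action on $G$, which is immediate once one notes that a biregular automorphism respects all the data — stratifications, smooth loci, and tangent spaces — entering the two definitions.
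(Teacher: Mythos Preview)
Your proposal is correct and follows essentially the same approach as the paper's proof: both apply \Cref{proposition existence of flag with transversality properties} to produce an auxiliary flag with the desired transversality properties, then use \Cref{proposition transformation of transverse flags} to find an element of $GL(V)$ fixing $F_{\ast}$ and carrying the auxiliary flag to $F_{\ast}'$ (or vice versa), and finally transport transversality along the induced automorphism of $G$. The only cosmetic difference is that the paper phrases the transport via $h^{-1}$ rather than $g$; your added remark on why biregular automorphisms preserve both flavors of transversality makes explicit what the paper leaves implicit.
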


\begin{proof}
Let $G = G_{k}(V)$.
By \Cref{proposition existence of flag with transversality properties}, there exists a flag $F_{\ast}''$ in $V$ that is transverse to $F_{\ast}$, and such that the Schubert subvariety $X_{b}(F_{\ast}'') \subset G$ is simultaneously Whitney transverse and generically transverse to $X_{a}(F_{\ast}) \subset G$.
Then, as $(F_{\ast}, F_{\ast}')$ and $(F_{\ast}, F_{\ast}'')$ are pairs of transverse flags in $V$, there exists by \Cref{proposition transformation of transverse flags} an element $h \in GL(V)$ such that $h \cdot F_{\ast} = F_{\ast}$ and $h \cdot F_{\ast}' = F_{\ast}''$.
Thus, under the automorphism of $P$ induced by $h$, we have
\begin{align*}
h \cdot X_{a}(F_{\ast}) &= X_{a}(h \cdot F_{\ast}) = X_{a}(F_{\ast}), \\
h \cdot X_{b}(F_{\ast}') &= X_{b}(h \cdot F_{\ast}') = X_{b}(F_{\ast}'').
\end{align*}
We conclude that $X_{a}(F_{\ast}) = h^{-1} \cdot X_{a}(F_{\ast})$ and $X_{b}(F_{\ast}') = h^{-1} \cdot X_{b}(F_{\ast}'')$ are simultaneously Whitney transverse and generically transverse in $G$.
\end{proof}

\begin{remark}\label{remark richardson irreducible}
Intersections of the general translates of two Schubert varieties (like in \Cref{proposition transverse flags imply whitney transverse}) are called Richardson varieties \cite{rich}.
Richardson varieties are well-known to be irreducible (see e.g. \cite[Remark 2.2]{bc}).
\end{remark}

\begin{cor}\label{corollary transverse flags imply intersection product}
Let $0 \leq k \leq n$ be integers.
Let $V$ be a complex $n$-dimensional vector space, and let $G = G_{k}(V)$.
Let $a, b \in \mathcal{P}(n-k, k)$.
If $F_{\ast}$ and $F_{\ast}'$ are transverse flags in $V$, then $X_{a}(F_{\ast}) \cap X_{b}(F_{\ast}')$ is a pure-dimensional closed subvariety of $G$, and
$$
[X_{a}(F_{\ast}) \cap X_{b}(F_{\ast}')]_{G} = [X_{a}]_{G} \cdot [X_{b}]_{G}.
$$
\end{cor}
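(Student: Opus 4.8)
The plan is to assemble the statement from the transversality results established above. First I would apply \Cref{proposition transverse flags imply whitney transverse} to the transverse flags $F_{\ast}$ and $F_{\ast}'$: this shows that the Schubert varieties $X_{a}(F_{\ast})$ and $X_{b}(F_{\ast}')$ are in particular generically transverse in the nonsingular irreducible complex projective variety $G = G_{k}(V)$. Since Schubert varieties are irreducible, \Cref{prop codimension of transverse intersection} then immediately yields that every irreducible component $Z$ of $X_{a}(F_{\ast}) \cap X_{b}(F_{\ast}')$ satisfies $\codim_{G} Z = \codim_{G} X_{a}(F_{\ast}) + \codim_{G} X_{b}(F_{\ast}')$, so that $X_{a}(F_{\ast}) \cap X_{b}(F_{\ast}')$ is a pure-dimensional closed subvariety of $G$.

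For the homological identity, I would invoke \Cref{lemma intersection is cap product for whitney transverse subvarieties} with $P = G$, $X = X_{a}(F_{\ast})$, and $Y = X_{b}(F_{\ast}')$: all hypotheses are met, since $G$ is nonsingular, irreducible and projective, and $X_{a}(F_{\ast})$, $X_{b}(F_{\ast}')$ are irreducible closed subvarieties that are generically transverse in $G$. This gives
$$
[X_{a}(F_{\ast})]_{G} \cdot [X_{b}(F_{\ast}')]_{G} = [X_{a}(F_{\ast}) \cap X_{b}(F_{\ast}')]_{G}.
$$
It then remains to identify the left-hand side with $[X_{a}]_{G} \cdot [X_{b}]_{G}$. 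This is where I would use that the ambient fundamental class $[X_{a}]_{G} \in H_{\ast}(G)$ of a Schubert variety does not depend on the choice of defining flag: any two flags in $V$ are related by an element of the path-connected group $GL(V)$, which induces an isotopy of $G$ carrying $X_{a}(F_{\ast})$ onto the Schubert variety associated to any other flag, hence preserving the ambient fundamental class. Thus $[X_{a}(F_{\ast})]_{G} = [X_{a}]_{G}$ and $[X_{b}(F_{\ast}')]_{G} = [X_{b}]_{G}$, and combining with the displayed equation finishes the proof.

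I do not expect a genuine obstacle here, as the corollary is a formal consequence of the preceding results; the only point requiring a moment's care is checking that the hypotheses of \Cref{lemma intersection is cap product for whitney transverse subvarieties} are satisfied, i.e. the irreducibility of Schubert varieties and of the Grassmannian together with the projectivity of $G$, all of which are recorded in the preliminaries on Schubert varieties. (Note that the Whitney transversality also furnished by \Cref{proposition transverse flags imply whitney transverse} is not needed for this particular statement, though it will be essential for the Gysin-theoretic arguments later on.)
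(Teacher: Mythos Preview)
Your proposal is correct and follows essentially the same approach as the paper's proof: invoke \Cref{proposition transverse flags imply whitney transverse} to obtain generic transversality, then apply \Cref{prop codimension of transverse intersection} for pure-dimensionality and \Cref{lemma intersection is cap product for whitney transverse subvarieties} for the homological identity. The paper's proof is simply more terse, taking the flag-independence of $[X_a]_G$ as already established in the preliminaries.
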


\begin{proof}
The claim holds by \Cref{prop codimension of transverse intersection} and \Cref{lemma intersection is cap product for whitney transverse subvarieties}, where we note that $X_{a}(F_{\ast})$ and $X_{b}(F_{\ast}')$ are generically transverse by \Cref{proposition transverse flags imply whitney transverse}.
\end{proof}

For some specific partitions we can compute the intersection of the general translates of two Schubert varieties as follows.
The following result follows from the proof of \cite[Proposition 4.6, p. 141]{eh} (where we note that a partition $a \in \mathcal{P}(n-k, k)$ in our notation corresponds to a partition $\overline{a}$ with $\overline{a}_{i} = (n-k) - a_{k+1-i}$ for all $i$ in the notation of \cite{eh}).

\begin{prop}\label{proposition empty intersection of schubert varieties}
Let $0 \leq k \leq n$ be integers.
Let $V$ be a complex $n$-dimensional vector space, and let $F_{\ast}$ and $F_{\ast}'$ be transverse flags in $V$.
Then, for $a, b \in \mathcal{P}(n-k, k)$ the following holds in $G_{k}(V)$:
\begin{enumerate}
\item[(a)] If there exists $1 \leq i_{0} \leq k$ such that $a_{i_{0}} + b_{k+1-i_{0}} < n-k$, then
$$
X_{a}(F_{\ast}) \cap X_{b}(F_{\ast}') = \varnothing.
$$
\item[(b)] If $a_{i} + b_{k+1-i} = n-k$ for all $1 \leq i \leq k$, then
$$
X_{a}(F_{\ast}) \cap X_{b}(F_{\ast}') = \{\bigoplus_{i=1}^{k}F_{a_{i}+k+1-i} \cap F_{b_{k+1-i}+i}'\}.
$$
\end{enumerate}
\end{prop}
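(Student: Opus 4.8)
The plan is to translate both statements into explicit linear algebra using characterization (c) of transversality in \Cref{definition transverse flags}. First I would fix a basis $v_{1}, \dots, v_{n}$ of $V$ adapted to the two flags, so that $F_{i} = \langle v_{1}, \dots, v_{i}\rangle$ and $F_{i}' = \langle v_{n+1-i}, \dots, v_{n}\rangle$ for all $i$. The basic computation, valid for all flag indices $p, q$, is then $F_{p} \cap F_{q}' = \langle v_{j} : n+1-q \leq j \leq p\rangle$, a subspace that is zero when $p + q \leq n$ and has dimension $p + q - n$ otherwise. It is convenient to set $d_{i} = a_{k+1-i} + i$ and $e_{i} = b_{k+1-i} + i$ for $1 \leq i \leq k$. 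Since $a$ and $b$ are weakly decreasing, both sequences $(d_{i})$ and $(e_{i})$ are strictly increasing with values in $\{1, \dots, n\}$, and by the very definition of Schubert varieties, $P \in X_{a}(F_{\ast})$ iff $\dim(P \cap F_{d_{i}}) \geq i$ for all $i$, while $P \in X_{b}(F_{\ast}')$ iff $\dim(P \cap F_{e_{i}}') \geq i$ for all $i$. A short index manipulation gives the key identity $d_{i} + e_{k+1-i} = a_{k+1-i} + b_{i} + k + 1$, equivalently $d_{k+1-i} + e_{i} = a_{i} + b_{k+1-i} + k + 1$.

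For part (a), suppose $a_{i_{0}} + b_{k+1-i_{0}} < n-k$ for some $i_{0}$; then $d_{k+1-i_{0}} + e_{i_{0}} \leq n$, so $F_{d_{k+1-i_{0}}} \cap F_{e_{i_{0}}}' = 0$ by the computation above. If some $P$ were in $X_{a}(F_{\ast}) \cap X_{b}(F_{\ast}')$, then inside the $k$-dimensional space $P$ the subspaces $P \cap F_{d_{k+1-i_{0}}}$ and $P \cap F_{e_{i_{0}}}'$, of dimensions at least $k+1-i_{0}$ and $i_{0}$ respectively, would intersect in dimension at least $(k+1-i_{0}) + i_{0} - k = 1$, forcing $F_{d_{k+1-i_{0}}} \cap F_{e_{i_{0}}}' \neq 0$ --- a contradiction. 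Hence the intersection is empty.

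For part (b), the hypothesis gives $d_{i} + e_{k+1-i} = n+1$ for every $i$, so each line $L_{i} := F_{d_{i}} \cap F_{e_{k+1-i}}'$ is exactly one-dimensional, and the adapted basis identifies it as $L_{i} = \langle v_{d_{i}}\rangle$. Since the $d_{i}$ are pairwise distinct, $P_{0} := L_{1} \oplus \dots \oplus L_{k} = \langle v_{d_{1}}, \dots, v_{d_{k}}\rangle$ is $k$-dimensional. I would then check directly that $P_{0}$ lies in both Schubert varieties: $F_{d_{i}}$ contains $v_{d_{1}}, \dots, v_{d_{i}}$, and $F_{e_{i}}'$ contains exactly those $v_{d_{j}}$ with $d_{j} \geq n+1-e_{i}$, i.e. --- using $d_{j} + e_{k+1-j} = n+1$ --- those with $j \geq k+1-i$, so the two defining dimension inequalities hold for $P_{0}$. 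Conversely, if $P \in X_{a}(F_{\ast}) \cap X_{b}(F_{\ast}')$, the same dimension count as in part (a) (now without any strict inequality) yields $\dim(P \cap F_{d_{i}} \cap F_{e_{k+1-i}}') \geq 1$; being a nonzero subspace of the line $L_{i}$, it equals $L_{i}$, so $L_{i} \subseteq P$ for all $i$, whence $P_{0} \subseteq P$ and, by equality of dimensions, $P = P_{0}$. Finally, the reindexing $i \mapsto k+1-i$ turns $\langle v_{d_{1}}, \dots, v_{d_{k}}\rangle$ into $\bigoplus_{i=1}^{k} F_{a_{i}+k+1-i} \cap F_{b_{k+1-i}+i}'$ (the $i$-th summand being the line $\langle v_{d_{k+1-i}}\rangle$), which is precisely the subspace in the displayed formula.

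The argument has no conceptual difficulty once the adapted basis is in play; the one place that needs genuine care is the index bookkeeping --- reconciling the index pattern $a_{k+1-i}+i$ built into the definition of $X_{a}(F_{\ast})$ with the pairing of $d_{i}$ against $e_{k+1-i}$ and with the index pattern $a_{i}+k+1-i$ that appears in the asserted formula. The conceptual input is simply that transversality of $F_{\ast}$ and $F_{\ast}'$ collapses each relevant two-term flag intersection $F_{d_{i}} \cap F_{e_{k+1-i}}'$ to a single basis line, which reduces the whole statement to elementary combinatorics; alternatively one can bypass the basis and run everything through the dimension inequality $\dim(U \cap U') \geq \dim U + \dim U' - \dim(\text{ambient})$ together with the transversality formula $\dim(F_{p} \cap F_{q}') = \max(p+q-n, 0)$, which is the route taken in \cite{eh}.
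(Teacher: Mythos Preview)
Your proof is correct and carefully handles the index bookkeeping; the paper itself does not give a proof but simply cites \cite[Proposition 4.6, p.~141]{eh} after recording the dictionary between the two partition conventions. Your argument is essentially a self-contained version of the Eisenbud--Harris proof (as you yourself note at the end), so the approaches coincide.
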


In view of \Cref{corollary transverse flags imply intersection product}, we have the following

\begin{cor}\label{corollary empty intersection of schubert varieties}
Let $0 \leq k \leq n$ be integers.
Let $V$ be a complex $n$-dimensional vector space, and let $G = G_{k}(V)$.
Then, for $a, b \in \mathcal{P}(n-k, k)$ the following holds:
\begin{enumerate}
\item[(a)] If there exists $1 \leq i_{0} \leq k$ such that $a_{i_{0}} + b_{k+1-i_{0}} < n-k$, then
$$
[X_{a}]_{G} \cdot [X_{b}]_{G} = 0 \qquad \in H_{\ast}(G; \mathbb{Q}).
$$
\item[(b)] If $a_{i} + b_{k+1-i} = n-k$ for all $1 \leq i \leq k$, then
$$
[X_{a}]_{G} \cdot [X_{b}]_{G} = [\operatorname{pt}]_{G} \qquad \in H_{\ast}(G; \mathbb{Q}).
$$
\end{enumerate}
\end{cor}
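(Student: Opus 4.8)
The plan is to read the statement directly off \Cref{corollary transverse flags imply intersection product} together with \Cref{proposition empty intersection of schubert varieties}. First I would fix transverse flags $F_{\ast}, F_{\ast}'$ in $V$, which exist by part (c) of \Cref{definition transverse flags}. By \Cref{corollary transverse flags imply intersection product}, the set-theoretic intersection $X_{a}(F_{\ast}) \cap X_{b}(F_{\ast}')$ is a pure-dimensional closed subvariety of $G$ whose ambient fundamental class realizes the intersection product,
$$
[X_{a}]_{G} \cdot [X_{b}]_{G} = [X_{a}(F_{\ast}) \cap X_{b}(F_{\ast}')]_{G}.
$$
Hence everything reduces to identifying this physical intersection in the two cases, which is exactly the content of \Cref{proposition empty intersection of schubert varieties}.

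For part (a), the hypothesis $a_{i_{0}} + b_{k+1-i_{0}} < n-k$ is precisely the hypothesis of \Cref{proposition empty intersection of schubert varieties}(a), so $X_{a}(F_{\ast}) \cap X_{b}(F_{\ast}') = \varnothing$; the (ambient) fundamental class of the empty set is $0 \in H_{\ast}(G;\mathbb{Q})$, and the claim follows. For part (b), the hypothesis $a_{i} + b_{k+1-i} = n-k$ for all $i$ matches that of \Cref{proposition empty intersection of schubert varieties}(b), so $X_{a}(F_{\ast}) \cap X_{b}(F_{\ast}')$ consists of the single point $p = \bigoplus_{i=1}^{k} F_{a_{i}+k+1-i} \cap F_{b_{k+1-i}+i}' \in G$. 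Since $G$ is path-connected, $[\{p\}]_{G} = [\operatorname{pt}]_{G}$ in $H_{0}(G;\mathbb{Q})$, and we conclude $[X_{a}]_{G} \cdot [X_{b}]_{G} = [\operatorname{pt}]_{G}$.

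The only point requiring any care — and I do not expect it to be a genuine obstacle — is the multiplicity bookkeeping in part (b): one must make sure that the degree-zero class produced by \Cref{corollary transverse flags imply intersection product} is the class of a \emph{single reduced} point and not a positive multiple of it. This is built into \Cref{lemma intersection is cap product for whitney transverse subvarieties}, which expresses the intersection product as the sum $\sum_{Z}[Z]_{G}$ over the irreducible components $Z$, each with coefficient one; here there is exactly one component, namely the point $p$. Dimensionally this is consistent, since $a_{i} + b_{k+1-i} = n-k$ for all $i$ forces $|a| + |b| = k(n-k) = \dim_{\mathbb{C}} G$, so the intersection is zero-dimensional.
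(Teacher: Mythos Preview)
Your proposal is correct and matches the paper's approach exactly: the paper states this corollary immediately after \Cref{proposition empty intersection of schubert varieties} with the preface ``In view of \Cref{corollary transverse flags imply intersection product}, we have the following,'' and gives no further argument. Your additional remark about multiplicity in part (b) is a fair sanity check, and your justification via \Cref{lemma intersection is cap product for whitney transverse subvarieties} is appropriate.
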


\subsection{Segre Products in Ambient Grassmannians}\label{Segre products in ambient Grassmannians}
In this section, we use the Segre product of Grassmannians in an ambient Grassmannian (see \Cref{proposition segre product of schubert varieties} below) to compute intersections of Schubert varieties associated to amalgamated partitions (see \Cref{corollary twisting rule}).

Suppose that the $n$-dimensional complex vector space $V$ is written as the direct sum $V = V' \oplus V''$ of two linear subspaces $V', V'' \subset V$ of dimensions $\operatorname{dim}_{\mathbb{C}}V' = n'$ and $\operatorname{dim}_{\mathbb{C}}V'' = n''$. Then, for any flags $F_{\ast}'$ in $V'$ and $F_{\ast}''$ in $V''$, we obtain a flag $F_{\ast}' \oplus F_{\ast}''$ in $V$ by defining
$$
(F_{\ast}' \oplus F_{\ast}'')_{w} = \begin{cases}
F_{w}', &\text{if } 0 \leq w \leq n', \\
V' \oplus F_{w-n'}'', &\text{if } n'+1 \leq w \leq n.
\end{cases}
$$
Conversely, any flag $F_{\ast}$ in $V$ with $F_{n'} = V'$ can be written as $F_{\ast} = F_{\ast}' \oplus F_{\ast}''$ for unique flags $F_{\ast}'$ in $V'$ and $F_{\ast}''$ in $V''$, namely $F_{u}' = F_{u}$ for all $u$ and $F_{v}'' = F_{v+n'} \cap V''$ for all $v$.

Writing $m'' = m-m'$ and $k'' = k-k'$ (so that $m = m'+m''$ and $k = k'+k''$), the \emph{amalgamation of partitions} (see \Cref{figure segre partitions}) is defined as the map
\begin{align*}
\sqcup \colon \mathcal{P}(m', k') \times \mathcal{P}(m'', k'') &\rightarrow \mathcal{P}(m, k), \\
(c', c'') &\mapsto \sqcup(c', c'') =: c' \sqcup c'', \\
(c' \sqcup c'')_{i} &= \begin{cases}
m' + c_{i}'', &\text{if } 1 \leq i \leq k'', \\
c_{i-k''}', &\text{if } k''+1 \leq i \leq k.
\end{cases}
\end{align*}

\begin{figure}[htbp]
\centering
\fbox{\begin{tikzpicture}
\draw (0, 0) node {\includegraphics[width=0.6\textwidth]{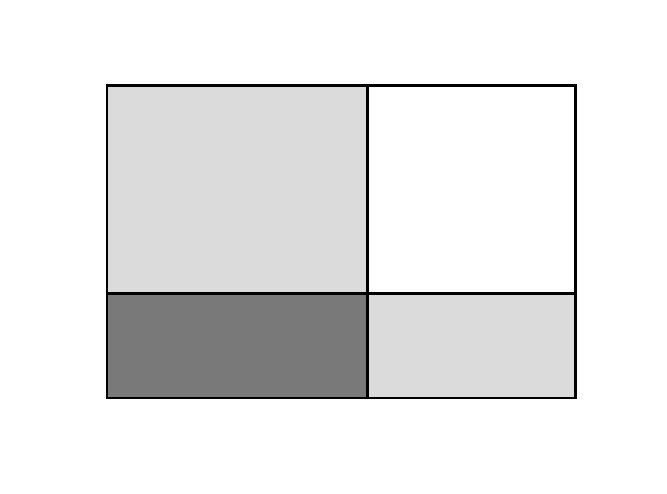}};
\draw (-1.2, 0.5) node {$D_{c'}$};
\draw (1.6, -1.2) node {$D_{c''}$};
\draw (0, -2.4) node {$D_{c' \sqcup c''}$};
\draw (-1.2, 2.3) node {$m'$};
\draw (1.6, 2.3) node {$m''$};
\draw (-3.0, 0.5) node {$k'$};
\draw (-3.0, -1.2) node {$k''$};
\end{tikzpicture}}
\caption{Relation between the Young diagrams $D_{c'}$, $D_{c''}$, and $D_{c' \sqcup c''}$.}
\label{figure segre partitions}
\end{figure}

An important tool is the Segre product of Grassmannians in an ambient Grassmannian (see e.g. Definition 2.13 in \cite{bc}).

\begin{thm}[Segre Product]\label{proposition segre product of schubert varieties}
Let $0 \leq k' \leq n'$ and $0 \leq k'' \leq n''$ be integers, and set $n = n' + n''$ and $k = k' + k''$.
Then, for a complex vector space $V$ of dimension $\operatorname{dim}_{\mathbb{C}}V = n$ that is the direct sum $V = V' \oplus V''$ of linear subspaces of dimensions $\operatorname{dim}_{\mathbb{C}}V' = n'$ and $\operatorname{dim}_{\mathbb{C}}V'' = n''$, the \emph{Segre product of the Grassmannians $G' = G_{k'}(V')$ and $G'' = G_{k''}(V'')$ in $G = G_{k}(V)$},
\begin{align}\label{equation segre product}
S := S^{k', k''}_{V', V''} \colon G' \times G'' \rightarrow G, \qquad S(P', P'') = P' \oplus P'',
\end{align}
is a closed algebraic embedding.
Let $m' = n'-k'$, $m'' = n''-k''$, and $m= n-k \; (=m'+m'')$.
Then, given pairs of flags $(E_{\ast}', F_{\ast}')$ in $V'$ and $(E_{\ast}'', F_{\ast}'')$ in $V''$, and $a', b' \in \mathcal{P}(m', k')$, $a'', b'' \in \mathcal{P}(m'', k'')$, we have
$$
X_{a' \sqcup a''}(E_{\ast}' \oplus E_{\ast}'') \cap X_{b'' \sqcup b'}(F_{\ast}'' \oplus F_{\ast}') = S((X_{a'}(E_{\ast}') \cap X_{b'}(F_{\ast}')) \times (X_{a''}(E_{\ast}'') \cap X_{b''}(F_{\ast}''))).
$$
\end{thm}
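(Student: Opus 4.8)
The plan is to verify the claimed set-theoretic identity by a direct pointwise argument, reducing everything to the defining incidence conditions of the Schubert varieties involved and exploiting the block-diagonal structure of the flags $E_\ast' \oplus E_\ast''$ and $F_\ast'' \oplus F_\ast'$. First I would record that the amalgamation $\sqcup$ is designed precisely so that, for a subspace of the form $P = P' \oplus P''$ with $P' \subset V'$ of dimension $k'$ and $P'' \subset V''$ of dimension $k''$, the condition ``$P \in X_{a' \sqcup a''}(E_\ast' \oplus E_\ast'')$'' decouples into the pair of conditions ``$P' \in X_{a'}(E_\ast')$'' and ``$P'' \in X_{a''}(E_\ast'')$''. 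Concretely, the index shift in the definition of $\sqcup$ (entries $m' + c''_i$ for $1 \le i \le k''$, then $c'_{i - k''}$ for $k'' < i \le k$) together with the block description $(E_\ast' \oplus E_\ast'')_w = E_w'$ for $w \le n'$ and $= V' \oplus E''_{w - n'}$ for $w > n'$ is exactly what makes the $k''$ ``large'' incidence constraints see only the $V''$-part and the $k'$ ``small'' ones see only the $V'$-part. I would isolate this as the key computational lemma: for $P = P' \oplus P''$,
\[
P \in X_{a' \sqcup a''}(E_\ast' \oplus E_\ast'') \iff P' \in X_{a'}(E_\ast') \text{ and } P'' \in X_{a''}(E_\ast'').
\]

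Granting that lemma, the inclusion $\supseteq$ is immediate: if $P' \in X_{a'}(E_\ast') \cap X_{b'}(F_\ast')$ and $P'' \in X_{a''}(E_\ast'') \cap X_{b''}(F_\ast'')$, then $S(P', P'') = P' \oplus P''$ lies in $X_{a' \sqcup a''}(E_\ast' \oplus E_\ast'')$ by the lemma applied to the first pair of flags, and in $X_{b'' \sqcup b'}(F_\ast'' \oplus F_\ast')$ by the lemma applied to the second pair (with the roles of the two summands swapped, so the ``large'' block now refers to $V'$ and the amalgamation is $b'' \sqcup b'$, i.e. $b''$-part built from $V''$ and $b'$-part from $V'$). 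So $S((X_{a'} \cap X_{b'}) \times (X_{a''} \cap X_{b''})) \subseteq X_{a' \sqcup a''}(\cdots) \cap X_{b'' \sqcup b'}(\cdots)$.

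For the reverse inclusion $\subseteq$, take $P \in X_{a' \sqcup a''}(E_\ast' \oplus E_\ast'') \cap X_{b'' \sqcup b'}(F_\ast'' \oplus F_\ast')$. The crucial point is that such a $P$ must split as $P = (P \cap V') \oplus (P \cap V'')$ with the two summands of the correct dimensions $k'$ and $k''$; once this is known, the lemma applied to both flag pairs finishes the argument, since it forces $P \cap V' \in X_{a'}(E_\ast') \cap X_{b'}(F_\ast')$ and $P \cap V'' \in X_{a''}(E_\ast'') \cap X_{b''}(F_\ast'')$, whence $P = S(P \cap V', P \cap V'')$ lies in the image of $S$ restricted to the product of the two intersections. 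To establish the splitting, I would extract from membership in $X_{a' \sqcup a''}(E_\ast' \oplus E_\ast'')$ the inequality $\dim(P \cap (V' \oplus E''_j)) \ge (\text{something involving } k'' \text{ and } a'')$ for appropriate $j$, and in particular, taking the smallest relevant index, $\dim(P \cap V') \ge k'$; symmetrically, membership in $X_{b'' \sqcup b'}(F_\ast'' \oplus F_\ast')$ — where now $F_n'' \oplus F_0' = V''$ sits as the initial block — yields $\dim(P \cap V'') \ge k''$. Since $V = V' \oplus V''$ and $\dim P = k = k' + k''$, these two lower bounds together with $\dim(P\cap V') + \dim(P \cap V'') \le \dim P$ force equalities $\dim(P \cap V') = k'$, $\dim(P \cap V'') = k''$, and $P = (P \cap V') \oplus (P \cap V'')$.

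The main obstacle is the bookkeeping in the key computational lemma: one has to chase the reindexing in the definition of $\sqcup$ against the two different block positions — $V'$ as the \emph{bottom} block in $E_\ast' \oplus E_\ast''$ versus $V''$ as the bottom block in $F_\ast'' \oplus F_\ast'$ — and check that each of the $k$ incidence inequalities $\dim_\cplx(P \cap F_{a_{k+1-i}+i}) \ge i$ either is automatically satisfied (when the flag step lies entirely in one summand and the relevant part of $P$ fits inside the complementary summand) or reduces exactly to one of the $k'$ (resp. $k''$) incidence inequalities for $X_{a'}(E_\ast')$ (resp. $X_{a''}(E_\ast'')$). I expect the cleanest way to organize this is to first prove the splitting $P = (P\cap V')\oplus(P\cap V'')$ as above using only a couple of the incidence inequalities, and then, under the assumption that $P$ already splits, verify the equivalence by a clean index comparison; once $P$ splits, $P \cap (V' \oplus E''_{w-n'}) = P' \oplus (P'' \cap E''_{w-n'})$ and the dimension counts separate cleanly. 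A final remark would note that the intersections $X_{a'}(E_\ast') \cap X_{b'}(F_\ast')$ and $X_{a''}(E_\ast'') \cap X_{b''}(F_\ast'')$ appearing on the right are the set-theoretic intersections in $G'$ and $G''$ respectively, consistent with the paper's notational convention.
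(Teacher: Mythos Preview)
Your approach to the set-theoretic identity is essentially the same as the paper's: both argue pointwise, first extracting from the two Schubert membership conditions that any $P$ in the intersection must split as $(P\cap V')\oplus(P\cap V'')$ with the correct dimensions $k',k''$, and then decoupling the incidence conditions to recover the factorwise Schubert constraints. The paper organizes this slightly differently---it first reduces to the two special cases where one of $(b',b'')$ or $(a',a'')$ is the maximal rectangular pair (so that one side of the product is the full Grassmannian), proves one such case by the same dimension-count-and-decouple argument you outline, and then intersects the two special cases using injectivity of $S$---whereas you package the decoupling as a standalone lemma applied twice; the mathematical content is the same.

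One omission: the theorem also asserts that $S$ is a closed algebraic embedding, and your plan does not address this. The paper handles it briefly by checking in standard affine charts of the Grassmannians that $S$ is a holomorphic embedding (the charts are compatible with direct sums), noting injectivity via the projections $V'\oplus V''\to V',V''$, and then invoking Chow's theorem and the fact that holomorphic maps between smooth projective varieties are algebraic to upgrade this to a closed algebraic embedding. You should include a sentence or two to this effect.
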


\begin{proof}
As for the first claim, it suffices to show that $S$ is an embedding of complex manifolds.
In fact, by Chow's theorem, any analytic subvariety of projective space is algebraic (see e.g. Griffiths-Harris \cite[p. 167]{gh}).
Furthermore, any holomorphic map between two smooth algebraic varieties can be described by rational functions (see e.g. Griffiths-Harris \cite[Fact 2., p. 170]{gh}.)
To show that $S$ is an embedding of complex manifolds, it suffices to work in local charts of the Grassmannians, where the linear subspaces of $V'$ and $V''$ are spanned by the rows of matrices having a permutation matrix as a certain minor, and this presentation is compatible with passing to the direct sum $V' \oplus V''$, so $S$ is locally a homolorphic embedding.
Note that $S$ is injective in view of the projections of $V = V' \oplus V''$ to the first and second summand.

To show the second claim, we set $E_{\ast} = E_{\ast}'' \oplus E_{\ast}'$, $F_{\ast} = F_{\ast}' \oplus F_{\ast}''$, and $a = a' \sqcup a''$, $b = b'' \sqcup b'$.
We recall that
\begin{align*}
X_{a}(E_{\ast}) &= \{P \in G = G_{k}(V)| \operatorname{dim}_{\mathbb{C}}(P \cap E_{a_{k+1-i}+i}) \geq i, \; 1 \leq i \leq k\}, \\
X_{a'}(E_{\ast}') &= \{P' \in G_{k'}(E_{n'})| \operatorname{dim}_{\mathbb{C}}(P' \cap E_{a_{k'+1-i'}'+i'}') \geq i', \; 1 \leq i' \leq k'\} \\
 &= \{W' \in G_{k'}(E_{n'})| \operatorname{dim}_{\mathbb{C}}(P' \cap E_{a_{k+1-i'}+i'}) \geq i', \; 1 \leq i' \leq k'\}, \\
X_{a''}(E_{\ast}'') &= \{P'' \in G_{k''}(F_{n''})| \operatorname{dim}_{\mathbb{C}}(W'' \cap E_{a_{k''+1-i''}''+i''}'') \geq i'', \; 1 \leq i'' \leq k''\} \\
 &= \{W'' \in G_{k''}(F_{n''})| \operatorname{dim}_{\mathbb{C}}(P'' \cap E_{a_{k''+1-i''}+i''+k'} \cap F_{n''}) \geq i'', \; 1 \leq i'' \leq k''\}, \\
X_{b}(F_{\ast}) &= \{P \in G = G_{k}(V)| \operatorname{dim}_{\mathbb{C}}(P \cap F_{b_{k+1-i}+i}) \geq i, \; 1 \leq i \leq k\}, \\
X_{b'}(F_{\ast}') &= \{P' \in G_{k'}(E_{n'})| \operatorname{dim}_{\mathbb{C}}(P' \cap F_{b_{k'+1-i'}'+i'}') \geq i', \; 1 \leq i' \leq k'\} \\
 &= \{W' \in G_{k'}(E_{n'})| \operatorname{dim}_{\mathbb{C}}(P' \cap E_{n'} \cap F_{b_{k'+1-i'}+i'+k''}') \geq i', \; 1 \leq i' \leq k'\}, \\
X_{b''}(F_{\ast}'') &= \{P'' \in G_{k''}(F_{n''})| \operatorname{dim}_{\mathbb{C}}(P'' \cap F_{b_{k''+1-i''}''+i''}'') \geq i'', \; 1 \leq i'' \leq k''\} \\
&= \{P'' \in G_{k''}(F_{n''})| \operatorname{dim}_{\mathbb{C}}(P'' \cap F_{b_{k+1-i''}+i''}) \geq i'', \; 1 \leq i'' \leq k''\}.
\end{align*}
Without loss of generality, it suffices to prove that
\begin{equation}\label{equation special case a}
X_{a}(E_{\ast}) \cap X_{[m'' \times k''] \sqcup [m' \times k']}(F_{\ast}) = S(X_{a'}(E_{\ast}') \times X_{a''}(E_{\ast}''))
\end{equation}
and
\begin{equation}\label{equation special case b}
X_{[m' \times k'] \sqcup [m'' \times k'']}(E_{\ast}) \cap X_{b}(F_{\ast}) = S(X_{b'}(F_{\ast}') \times X_{b''}(F_{\ast}'')).
\end{equation}
(In fact, using that $a \leq [m' \times k'] \sqcup [m'' \times k'']$ and $b \leq [m'' \times k''] \sqcup [m' \times k']$, we then obtain
\begin{align*}
X_{a}(E_{\ast}) \cap X_{b}(F_{\ast}) &= S(X_{a'}(E_{\ast}') \times X_{a''}(E_{\ast}'')) \cap S(X_{b'}(F_{\ast}') \times X_{b''}(F_{\ast}'')) \\
&= S((X_{a'}(E_{\ast}') \times X_{a''}(E_{\ast}'')) \cap (X_{b'}(F_{\ast}') \times X_{b''}(F_{\ast}''))) \\
&= S((X_{a'}(E_{\ast}') \cap X_{b'}(F_{\ast}')) \times (X_{a''}(E_{\ast}'') \cap X_{b''}(F_{\ast}'')))
\end{align*}
where we have also used that $S$ is injective.)
In the following, we show (\ref{equation special case a}), where we write $b' = [m' \times k']$, $b'' = [m'' \times k'']$, and $b = [m'' \times k''] \sqcup [m' \times k']$ (the proof of (\ref{equation special case b}) is similar).

Let us first show the inclusion $S(X_{a'}(E_{\ast}') \times X_{a''}(E_{\ast}'')) \subset X_{a}(E_{\ast}) \cap X_{b}(F_{\ast})$ in (\ref{equation special case a}).
For this purpose, suppose that $P' \in X_{a'}(E_{\ast}')$ and $P'' \in X_{a''}(E_{\ast}'')$, and consider $P := P' \oplus P'' = S(P', P'') \in G = G_{k}(V)$.
To show that $P \in X_{a}(E_{\ast})$, we have to show that $\operatorname{dim}_{\mathbb{C}}(P \cap E_{a_{k+1-i}+i}) \geq i$ for $1 \leq i \leq k$.
If $1 \leq i \leq k'$, then it follows from $P' \subset P$ and $P' \in X_{a'}(E_{\ast}')$ that
$$
\operatorname{dim}_{\mathbb{C}}(P \cap E_{a_{k+1-i}+i}) \geq \operatorname{dim}_{\mathbb{C}}(P' \cap E_{a_{k+1-i}+i}) \geq i.
$$
If $k'+1 \leq i \leq k$, then we use $P = P' \oplus P''$, $P' \subset E_{n'} \subset E_{a_{k+1-i}+i}$ (where the second inclusion holds since $k+1-i \leq k''$ implies that $a_{k+1-i} \geq m'$, and hence $a_{k+1-i} + i\geq n'$), and $P'' \in X_{a''}(E_{\ast}'')$ to conclude that
\begin{align*}
\operatorname{dim}_{\mathbb{C}}(P \cap E_{a_{k+1-i}+i}) &\geq \operatorname{dim}_{\mathbb{C}}(P' \cap E_{a_{k+1-i}+i}) + \operatorname{dim}_{\mathbb{C}}(P'' \cap E_{a_{k+1-i}+i}) \\
&\geq \operatorname{dim}_{\mathbb{C}}(P') + \operatorname{dim}_{\mathbb{C}}(P'' \cap E_{a_{k''+1-(i-k')}+(i-k')+k'} \cap F_{n''}) \\
&\geq k' + (i-k') = i.
\end{align*}
To show that $P \in X_{b}(F_{\ast})$, we have to show that $\operatorname{dim}_{\mathbb{C}}(P \cap F_{b_{k+1-i}+i}) \geq i$ for $1 \leq i \leq k$.
Since $b_{1} = \dots = b_{k'} = m$ and $b_{k'+1} = \dots = b_{k} = m''$, it suffices to show that $\operatorname{dim}_{\mathbb{C}}(P \cap F_{n}) \geq k$ (which implies $\operatorname{dim}_{\mathbb{C}}(P \cap F_{b_{k+1-i}+i}) \geq i$ for $1 \leq i \leq k'$) and $\operatorname{dim}_{\mathbb{C}}(P \cap F_{n''}) \geq k''$ (which implies $\operatorname{dim}_{\mathbb{C}}(P \cap F_{b_{k+1-i}+i}) \geq i$ for $k'+1 \leq i \leq k$).
Indeed, it follows from $P \subset F_{n}$ that $\operatorname{dim}_{\mathbb{C}}(P \cap F_{n}) = \operatorname{dim}_{\mathbb{C}}(P) = k$, and from $P'' \subset P \cap F_{n''}$ that $\operatorname{dim}_{\mathbb{C}}(P \cap F_{n''}) \geq \operatorname{dim}_{\mathbb{C}}(P) \geq k''$.

Conversely, let us show that $X_{a}(E_{\ast}) \cap X_{b}(F_{\ast}) \subset S(X_{a'}(E_{\ast}') \times X_{a''}(E_{\ast}''))$ in (\ref{equation special case a}).
Given $P \in X_{a}(E_{\ast}) \cap X_{b}(F_{\ast})$, we set $P' = P \cap E_{n'}$ and $P'' = P \cap F_{n''}$, and note that $\operatorname{dim}_{\mathbb{C}}(P') = \operatorname{dim}_{\mathbb{C}}(P \cap E_{n'}) \geq \operatorname{dim}_{\mathbb{C}}(P \cap E_{a_{k+1-k'} +k'}) \geq k'$ because $P \in X_{a}(E_{\ast})$ (where we note that $E_{a_{k+1-k'} +k'} \subset E_{n'}$ holds because $a_{k''+1} \leq m'$ implies that $a_{k+1-k'} +k' \leq n'$), and $\operatorname{dim}_{\mathbb{C}}(P'') = \operatorname{dim}_{\mathbb{C}}(P \cap F_{n''}) = \operatorname{dim}_{\mathbb{C}}(P \cap F_{b_{k+1-k''} +k''}) \geq k''$ because $P \in X_{b}(F_{\ast})$ (where we note that $F_{b_{k+1-k''} +k''} = F_{n''}$ holds because $b_{k'+1} = m''$ implies that $b_{k+1-k''} +k'' = n''$).
Hence, we have $\operatorname{dim}_{\mathbb{C}}(P') = k'$ and $\operatorname{dim}_{\mathbb{C}}(P'') = k''$ because $P' \cap P'' = 0$.
It remains to show that $P' \in X_{a'}(E_{\ast}')$ and $P'' \in X_{a''}(E_{\ast}'')$.
Then, it follows from $k = k' + k''$ that $S(P', P'') = P' + P'' = P$.
To show that $P' \in X_{a'}(E_{\ast}')$, we have to show that $\operatorname{dim}_{\mathbb{C}}(P' \cap E_{a_{k+1-i'}+i'}) \geq i'$ for $1 \leq i' \leq k'$.
In fact, it follows from $E_{a_{k+1-i'}+i'} \subset E_{n'}$ (which holds because $k+1-i' \geq k''+1$ implies that $a_{k+1-i'} \leq m'$, and hence $a_{k+1-i'}+i' \leq n'$) and $P \in X_{a}(E_{\ast})$ that
$$
\operatorname{dim}_{\mathbb{C}}(P' \cap E_{a_{k+1-i'}+i'}) = \operatorname{dim}_{\mathbb{C}}(P \cap E_{n'} \cap E_{a_{k+1-i'}+i'}) = \operatorname{dim}_{\mathbb{C}}(P \cap E_{a_{k+1-i'}+i'}) \geq i'.
$$
Finally, to show that $P'' \in X_{a''}(E_{\ast}'')$, we have to show that $\operatorname{dim}_{\mathbb{C}}(P'' \cap E_{a_{k''+1-i''}+i''+k'} \cap F_{n''}) \geq i''$ for $1 \leq i'' \leq k''$.
Using $P'' = P \cap F_{n''}$ and $P \in X_{a}(F_{\ast})$, we see for $1 \leq i'' \leq k''$ that
\begin{align*}
&\operatorname{dim}_{\mathbb{C}}(P'' \cap E_{a_{k''+1-i''}+i''+k'} \cap F_{n''}) \\
&= \operatorname{dim}_{\mathbb{C}}(P \cap E_{a_{k''+1-i''}+i''+k'} \cap F_{n''}) \\
&= \operatorname{dim}_{\mathbb{C}}(P \cap E_{a_{k''+1-i''}+i''+k'}) + \operatorname{dim}_{\mathbb{C}}(P \cap F_{n''}) - \operatorname{dim}_{\mathbb{C}}(P \cap E_{a_{k''+1-i''}+i''+k'} + P \cap F_{n''}) \\
&\geq \operatorname{dim}_{\mathbb{C}}(P \cap E_{a_{k+1-(i''+k')}+i''+k'}) + \operatorname{dim}_{\mathbb{C}}(P'') - \operatorname{dim}_{\mathbb{C}}(P) \\
&\geq (i'' + k') + k'' - k = i''.
\end{align*}

This completes the proof of \Cref{proposition segre product of schubert varieties}.
\end{proof}

\begin{cor}\label{corollary twisting of partitions}
For $a', b' \in \mathcal{P}(m', k')$ and $a'', b'' \in \mathcal{P}(m'', k'')$ the map $S_{\ast} \colon H_{\ast}(G' \times G''; \mathbb{Q}) \rightarrow H_{\ast}(G; \mathbb{Q})$ induced by the Segre product (\ref{equation segre product}) satisfies
$$
[X_{a' \sqcup a''}]_{G} \cdot [X_{b'' \sqcup b'}]_{G} = S_{\ast}(([X_{a'}]_{G'} \cdot [X_{b'}]_{G'}) \times ([X_{a''}]_{G''} \cdot [X_{b''}]_{G''})).
$$
\end{cor}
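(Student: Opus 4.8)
The plan is to lift the asserted homology identity from the set-theoretic equality of subvarieties furnished by the Segre Product Theorem (\Cref{proposition segre product of schubert varieties}), converting each physical intersection into an intersection product by means of \Cref{corollary transverse flags imply intersection product}, applied simultaneously in $G'$, $G''$, and $G$.

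First I would pick a pair $(E_{\ast}', F_{\ast}')$ of transverse flags in $V'$ and a pair $(E_{\ast}'', F_{\ast}'')$ of transverse flags in $V''$ (these exist by \Cref{definition transverse flags}(c)), and then establish the key preliminary fact that the flags $E_{\ast}' \oplus E_{\ast}''$ and $F_{\ast}'' \oplus F_{\ast}'$ are transverse in $V = V' \oplus V''$. Concretely, choosing bases $e_{1}, \dots, e_{n'}$ of $V'$ and $f_{1}, \dots, f_{n''}$ of $V''$ with $E_{i}' = \langle e_{1}, \dots, e_{i} \rangle$, $F_{i}' = \langle e_{n'+1-i}, \dots, e_{n'} \rangle$, $E_{i}'' = \langle f_{1}, \dots, f_{i} \rangle$, $F_{i}'' = \langle f_{n''+1-i}, \dots, f_{n''} \rangle$, and setting $u_{i} = e_{i}$ for $1 \leq i \leq n'$ and $u_{n'+j} = f_{j}$ for $1 \leq j \leq n''$, one reads off directly from the definition of $\oplus$ on flags that $(E_{\ast}' \oplus E_{\ast}'')_{w} = \langle u_{1}, \dots, u_{w} \rangle$ and $(F_{\ast}'' \oplus F_{\ast}')_{w} = \langle u_{n+1-w}, \dots, u_{n} \rangle$ for all $w$; hence criterion (c) of \Cref{definition transverse flags} is satisfied.

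Next, set $Z' = X_{a'}(E_{\ast}') \cap X_{b'}(F_{\ast}') \subset G'$ and $Z'' = X_{a''}(E_{\ast}'') \cap X_{b''}(F_{\ast}'') \subset G''$. By \Cref{corollary transverse flags imply intersection product} applied to $V'$ and to $V''$, these are pure-dimensional with $[Z']_{G'} = [X_{a'}]_{G'} \cdot [X_{b'}]_{G'}$ and $[Z'']_{G''} = [X_{a''}]_{G''} \cdot [X_{b''}]_{G''}$; applying the same corollary to $V$ with the transverse flags $E_{\ast}' \oplus E_{\ast}''$ and $F_{\ast}'' \oplus F_{\ast}'$ yields $[X_{a' \sqcup a''}(E_{\ast}' \oplus E_{\ast}'') \cap X_{b'' \sqcup b'}(F_{\ast}'' \oplus F_{\ast}')]_{G} = [X_{a' \sqcup a''}]_{G} \cdot [X_{b'' \sqcup b'}]_{G}$. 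By \Cref{proposition segre product of schubert varieties}, the intersection on the left equals $S(Z' \times Z'')$; and since $S$ is a closed algebraic embedding, it restricts to a biholomorphism $Z' \times Z'' \to S(Z' \times Z'')$, which preserves the canonical complex orientations, so that $[S(Z' \times Z'')]_{G} = S_{\ast}([Z' \times Z'']_{G' \times G''})$. Finally, because the fundamental class of a product of oriented pseudomanifolds is the homological cross product of the two fundamental classes, and the cross product is compatible with the pushforwards along the inclusions $Z' \hookrightarrow G'$ and $Z'' \hookrightarrow G''$, we get $[Z' \times Z'']_{G' \times G''} = [Z']_{G'} \times [Z'']_{G''}$. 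Chaining the displayed identities gives the asserted formula.

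I do not anticipate a genuine obstacle here; the only point requiring care is verifying transversality of the concatenated flags $E_{\ast}' \oplus E_{\ast}''$ and $F_{\ast}'' \oplus F_{\ast}'$ in $V$ — so that \Cref{corollary transverse flags imply intersection product} applies in $G$ — together with keeping track of the order of the summands imposed by the Segre Product Theorem. Degenerate cases (empty intersections in $G'$, $G''$, or $G$) need no separate treatment, since then the relevant fundamental classes, and hence both sides of the claimed equation, vanish.
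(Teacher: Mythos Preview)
Your proposal is correct and follows essentially the same approach as the paper: choose transverse flag pairs in $V'$ and $V''$, verify that the concatenated flags $E_{\ast}' \oplus E_{\ast}''$ and $F_{\ast}'' \oplus F_{\ast}'$ are transverse in $V$, apply \Cref{corollary transverse flags imply intersection product} in $G'$, $G''$, and $G$, and then invoke \Cref{proposition segre product of schubert varieties} together with naturality of the cross product to finish. Your explicit basis verification of the transversality of the concatenated flags is a detail the paper merely asserts, so your write-up is in fact slightly more thorough.
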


\begin{proof}
Fix pairs of transverse flags $(E_{\ast}', F_{\ast}')$ in $V'$ and $(E_{\ast}'', F_{\ast}'')$ in $V''$.
Then, it follows that $E_{\ast}' \oplus E_{\ast}''$ and $F_{\ast}'' \oplus F_{\ast}'$ are transverse flags in $V = V' \oplus V''$.
By \Cref{corollary transverse flags imply intersection product}, $X' = X_{a'}(E_{\ast}') \cap X_{b'}(F_{\ast}')$ is a pure-dimensional closed subvariety of $G'$, and $[X']_{G'} = [X_{a'}]_{G'} \cdot [X_{b'}]_{G'}$.
Similarly, $X'' = X_{a''}(E_{\ast}'') \cap X_{b''}(F_{\ast}'')$ is a pure-dimensional closed subvariety of $G''$, and $[X'']_{G''} = [X_{a''}]_{G''} \cdot [X_{b''}]_{G''}$.
Moreover, $X = X_{a' \sqcup a''}(E_{\ast}' \oplus E_{\ast}'') \cap X_{b'' \sqcup b'}(F_{\ast}'' \oplus F_{\ast}')$ is a pure-dimensional closed subvariety of $G$, and $[X]_{G} = [X_{a' \sqcup a''}]_{G} \cdot [X_{b'' \sqcup b'}]_{G}$.
Using \Cref{proposition segre product of schubert varieties}, we have
\begin{equation}\label{equation 1 proof twisting of partitions}
[X_{a' \sqcup a''}]_{G} \cdot [X_{b'' \sqcup b'}]_{G} = [X]_{G} = [S(X' \times X'')]_{G}.
\end{equation}
Next, we have
$$
[X' \times X'']_{G' \times G''} = [X']_{G'} \times [X'']_{G''} = ([X_{a'}]_{G'} \cdot [X_{b'}]_{G'}) \times ([X_{a''}]_{G''} \cdot [X_{b''}]_{G''}).
$$
Hence, using the Segre product $S \colon G' \times G'' \rightarrow G$ (\ref{equation segre product}) and the induced map $S_{\ast} \colon H_{\ast}(G' \times G''; \mathbb{Q}) \rightarrow H_{\ast}(G; \mathbb{Q})$, we obtain
\begin{equation}\label{equation 2 proof twisting of partitions}
[S(X' \times X'')]_{G} = S_{\ast}([X' \times X'']_{G' \times G''}) = S_{\ast}(([X_{a'}]_{G'} \cdot [X_{b'}]_{G'}) \times ([X_{a''}]_{G''} \cdot [X_{b''}]_{G''})).
\end{equation}
Finally, the claim follows by combining (\ref{equation 1 proof twisting of partitions}) and (\ref{equation 2 proof twisting of partitions}).
\end{proof}

An immediate consequence of \Cref{corollary twisting of partitions} is

\begin{thm}[``Intersection Box Extension Principle'']\label{corollary twisting rule}
Given $a', a'_{\ast}, b', b'_{\ast} \in \mathcal{P}(m', k')$ with $[X_{a'}]_{G'} \cdot [X_{b'}]_{G'} = [X_{a'_{\ast}}]_{G'} \cdot [X_{b'_{\ast}}]_{G'}$ and $a'', b'' \in \mathcal{P}(m'', k'')$, we have
$$
[X_{a' \sqcup a''}]_{G} \cdot [X_{b'' \sqcup b'}]_{G} = [X_{a'_{\ast} \sqcup a''}]_{G} \cdot [X_{b'' \sqcup b'_{\ast}}]_{G}.
$$
\end{thm}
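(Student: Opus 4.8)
The plan is to deduce the statement directly from \Cref{corollary twisting of partitions}, exploiting the bilinearity of the homological cross product.

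First I would apply \Cref{corollary twisting of partitions} to the partitions $a', b' \in \mathcal{P}(m', k')$ together with $a'', b'' \in \mathcal{P}(m'', k'')$. This yields
\[
[X_{a' \sqcup a''}]_{G} \cdot [X_{b'' \sqcup b'}]_{G} = S_{\ast}\bigl(([X_{a'}]_{G'} \cdot [X_{b'}]_{G'}) \times ([X_{a''}]_{G''} \cdot [X_{b''}]_{G''})\bigr).
\]
Next I would apply the very same corollary, this time to $a'_{\ast}, b'_{\ast} \in \mathcal{P}(m', k')$ and the \emph{unchanged} pair $a'', b'' \in \mathcal{P}(m'', k'')$, obtaining
\[
[X_{a'_{\ast} \sqcup a''}]_{G} \cdot [X_{b'' \sqcup b'_{\ast}}]_{G} = S_{\ast}\bigl(([X_{a'_{\ast}}]_{G'} \cdot [X_{b'_{\ast}}]_{G'}) \times ([X_{a''}]_{G''} \cdot [X_{b''}]_{G''})\bigr).
\]

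It then remains to compare the right-hand sides. By hypothesis, $[X_{a'}]_{G'} \cdot [X_{b'}]_{G'} = [X_{a'_{\ast}}]_{G'} \cdot [X_{b'_{\ast}}]_{G'}$ in $H_{\ast}(G'; \mathbb{Q})$, while the second tensor factor $[X_{a''}]_{G''} \cdot [X_{b''}]_{G''}$ is literally the same in both expressions. Since the homological cross product $H_{\ast}(G'; \mathbb{Q}) \otimes H_{\ast}(G''; \mathbb{Q}) \to H_{\ast}(G' \times G''; \mathbb{Q})$ is bilinear and $S_{\ast}$ is a homomorphism, the two right-hand sides agree, and the theorem follows.

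There is essentially no obstacle here: the content is entirely absorbed into \Cref{corollary twisting of partitions}. The only point that deserves a word of care is the bookkeeping of the amalgamation, namely that substituting $a' \rightsquigarrow a'_{\ast}$ and $b' \rightsquigarrow b'_{\ast}$ affects only the ``$G'$-slot'' of the Segre decomposition on both sides (the $G''$-slot being governed by the fixed $a'', b''$); this is exactly the structure recorded by the formula in \Cref{corollary twisting of partitions}, so no separate verification is needed.
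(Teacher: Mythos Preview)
Your proposal is correct and matches the paper's approach exactly: the paper states this theorem as ``an immediate consequence of \Cref{corollary twisting of partitions}'' and gives no further proof, and your argument supplies precisely the two applications of that corollary together with the obvious bilinearity of the cross product that make the deduction work.
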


\section{Gysin Coherent Characteristic Classes}
In this section, we introduce the notion of Gysin coherent characteristic classes (see \Cref{definition gysin coherent characteristic classes} below), which is central to the main result of this paper (see \Cref{main result on Gysin coherent characteristic classes with respect to x}).

In the following, by a variety we mean a pure-dimensional complex quasiprojective algebraic variety.

Let $\mathcal{X}$ be a family of inclusions $i \colon X \rightarrow W$, where $W$ is a smooth variety, and $X \subset W$ is a compact irreducible subvariety.
We require the following properties for $\mathcal{X}$:
\begin{itemize}
\item For every Schubert subvariety $X \subset G$ of a Grassmannian $G$, the inclusion $X \rightarrow G$ is in $\mathcal{X}$.
\item If $i \colon X \rightarrow W$ and $i' \colon X' \rightarrow W'$ are in $\mathcal{X}$, then the product $i \times i' \colon X \times X' \rightarrow W \times W'$ is in $\mathcal{X}$.
\item Given inclusions $i \colon X \rightarrow W$ and $i' \colon X' \rightarrow W'$ of compact subvarieties in smooth varieties, and an isomorphism $W \stackrel{\cong}{\longrightarrow} W'$ that restricts to an isomorphism $X \stackrel{\cong}{\longrightarrow} X'$, it follows from $i \in \mathcal{X}$ that $i' \in \mathcal{X}$.
\item For all closed subvarieties $X \subset M \subset W$ such that $X$ is compact and $M$ and $W$ are smooth, it holds that if the inclusion $X \rightarrow M$ is in $\mathcal{X}$, then the inclusion $X \rightarrow W$ is in $\mathcal{X}$.
\end{itemize}

Next, for a given family $\mathcal{X}$ of inclusions as above, by \emph{$\mathcal{X}$-transversality}, we mean a symmetric relation for closed irreducible subvarieties of a smooth variety that satisfies the following properties:
\begin{itemize}
\item The intersection $Z \cap Z'$ of two $\mathcal{X}$-transverse closed irreducible subvarieties $Z, Z' \subset W$ of a smooth variety $W$ is \emph{proper}, that is, $Z \cap Z'$ is pure-dimensional of codimension $c + c'$, where $c$ and $c'$ are the codimensions of $Z$ and $Z'$ in $W$, respectively.
\item The following analog of Kleiman's transversality theorem holds for the action of $GL_{n}(\mathbb{C})$ on the Grassmannians $G = G_{k}(\mathbb{C})$.
If $i \colon X \rightarrow G$ and $i' \colon X' \rightarrow G$ are inclusions in $\mathcal{X}$, then there is a nonempty open dense subset $U \subset GL_{n}(\mathbb{C})$ (in the complex topology) such that $X$ is $\mathcal{X}$-transverse to $g \cdot X'$ for all $g \in U$.
\item Locality: If $Z, Z' \subset W$ are $\mathcal{X}$-transverse closed irreducible subvarieties of a smooth variety $W$ and $U \subset W$ is an open subset that has nontrivial intersections with $Z$ and $Z'$, then $Z \cap U$ and $Z' \cap U$ are $\mathcal{X}$-transverse in $U$.
\end{itemize}

\begin{example}
The family $\mathcal{X}$ consisting of all inclusions of compact irreducible subvarieties in smooth varieties satisfies the above requirements.
In this case, we typically choose $\mathcal{X}$-transversality to mean simultaneously Whitney transversality and generic transversality.
In future applications of the framework, one may wish to restrict to Cohen-Macaulay $X$ and one may wish to incorporate Tor-independence into the notion of $\mathcal{X}$-transversality.
Note that the above requirements for $\mathcal{X}$ and $\mathcal{X}$-transversality are then still satisfied by Sierra's general homological Kleiman-Bertini theorem \cite{sierra}.
\end{example}

Recall that every inclusion $f \colon M \rightarrow W$ of a smooth closed subvariety $M$ of (complex) codimension $c$ in a smooth variety $W$ induces a topological Gysin map $f^{!} \colon H_{\ast}(W; \mathbb{Q}) \rightarrow H_{\ast - 2c}(M; \mathbb{Q})$.

\begin{defn}\label{definition gysin coherent characteristic classes}
A \emph{Gysin coherent characteristic class $c\ell$ with respect to $\mathcal{X}$} is a pair
$$
c\ell = (c\ell^{\ast}, c\ell_{\ast})
$$
consisting of a function $c\ell^{\ast}$ that assigns to every inclusion $f \colon M \rightarrow W$ of a smooth closed subvariety $M \subset W$ in a smooth variety $W$ an element
$$
c\ell^{\ast}(f) = c\ell^{0}(f) + c\ell^{1}(f) + c\ell^{2}(f) + \dots \in H^{\ast}(M; \mathbb{Q}), \quad c\ell^{p}(f) \in H^{p}(M; \mathbb{Q}),
$$
with $c\ell^{0}(f) = 1$, and a function $c\ell_{\ast}$ that assigns to every inclusion $i \colon X \rightarrow W$ of a compact possibly singular subvariety $X \subset W$ of complex dimension $d$ in a smooth variety $W$ an element
$$
c\ell_{\ast}(i) = c\ell_{0}(i) + c\ell_{1}(i) + c\ell_{2}(i) + \dots + c\ell_{2d}(i) \in H_{\ast}(W; \mathbb{Q}), \quad c\ell_{p}(i) \in H_{p}(W; \mathbb{Q}),
$$
with $c\ell_{2d}(i) = [X]_{W}$, such that the following properties hold:
\begin{enumerate}
\item\label{axiom multiplicativity} \emph{(Multiplicativity)}
For every $i \colon X \rightarrow W$ and $i' \colon X' \rightarrow W'$, we have
$$
c\ell_{\ast}(i \times i') = c\ell_{\ast}(i) \times c\ell_{\ast}(i').
$$
\item\label{axiom isomorphism} \emph{(Isomorphism invariance)}
For every $f \colon M \rightarrow W$ and $f' \colon M' \rightarrow W'$, and every isomorphism $W \stackrel{\cong}{\longrightarrow} W'$ that restricts to an isomorphism $\phi \colon M \stackrel{\cong}{\longrightarrow} M'$, we have
$$
\phi^{\ast}c\ell^{\ast}(f') = c\ell^{\ast}(f).
$$
Moreover, for every $i \colon X \rightarrow W$ and $i' \colon X' \rightarrow W'$, and every isomorphism $\Phi \colon W \stackrel{\cong}{\longrightarrow} W'$ that restricts to an isomorphism $X \stackrel{\cong}{\longrightarrow} X'$, we have
$$
\Phi_{\ast}c\ell_{\ast}(i) = c\ell_{\ast}(i').
$$
\item\label{axiom locality} \emph{(Naturality)}
For every $i \colon X \rightarrow W$ and $f \colon M \rightarrow W$ such that $X \subset M$, the inclusion $i^{M} := i| \colon X \rightarrow M$ satisfies
$$
f_{\ast}c\ell_{\ast}(i^{M}) = c\ell_{\ast}(i).
$$
\item\label{axiom gysin} \emph{(Gysin restriction in a transverse setup)}
There exists a notion of $\mathcal{X}$-transversality such that the following holds.
For every inclusion $i \colon X \rightarrow W$ in $\mathcal{X}$ and every inclusion $f \colon M \rightarrow W$ such that $M$ is irreducible, and $M$ and $X$ are $\mathcal{X}$-transverse in $W$, the inclusion $j \colon Y \rightarrow M$ of the pure-dimensional compact subvariety $Y := M \cap X \subset M$ satisfies
$$
f^{!} c\ell_{\ast}(i) = c\ell^{\ast}(f) \cap c\ell_{\ast}(j).
$$
\end{enumerate}
Such a class $c\ell$ is called \emph{Gysin coherent characteristic class} if $\mathcal{X}$ is the family of all inclusions of compact irreducible subvarieties in smooth varieties.
\end{defn}

\begin{example}
We prove in \Cref{The Goresky-MacPherson L-Class} that the pair $(c\ell^{\ast}, c\ell_{\ast})$ given by $c\ell^{\ast}(f \colon M \hookrightarrow W) = L^{\ast}(\nu_{M \subset W})$ and $c\ell_{\ast}(i \colon X \hookrightarrow W) = i_{\ast}L_{\ast}(X)$, where $L^{\ast}$ is Hirzebruch's cohomological $L$-class and $L_{\ast}$ is the Goresky-MacPherson $L$-class, forms a Gysin coherent characteristic class.
In future work, we plan to discuss other characteristic classes such as Chern classes, Todd classes, as well as motivic Hodge classes in this framework.
Note that the $L$-genus, i.e. the signature, agrees with the genus of $IT_{1 \ast}$ by Saito's intersection Hodge index theorem.
\end{example}

The main result of this paper is the following

\begin{thm}[Uniqueness Theorem]\label{main result on Gysin coherent characteristic classes with respect to x}
Let $c\ell$ and $\widetilde{c\ell}$ be Gysin coherent characteristic classes with respect to $\mathcal{X}$.
If $c\ell^{\ast} = \widetilde{c\ell}^{\ast}$ and $|c\ell_{\ast}| = |\widetilde{c\ell}_{\ast}|$ for the associated genera,
then we have $c\ell_{\ast}(i) = \widetilde{c\ell}_{\ast}(i)$ for all inclusions $i \colon X \rightarrow G$ in $\mathcal{X}$ of compact subvarieties in ambient Grassmannians.
\end{thm}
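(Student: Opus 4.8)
Here is a plan for proving \Cref{main result on Gysin coherent characteristic classes with respect to x}.

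\smallskip

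The plan is to show that the difference $\delta_\ast(i) := c\ell_\ast(i) - \widetilde{c\ell}_\ast(i)$ vanishes for every inclusion $i$ in $\mathcal{X}$ of a compact subvariety in an ambient Grassmannian, arguing by induction on the complex dimension of the Grassmannian; the Gysin restriction axiom will be used to push the computation into strictly smaller Grassmannians, and the Schubert and Segre intersection calculus of Section~\ref{Intersection of Schubert varieties} will organize the bookkeeping. First I would record the elementary properties of $\delta_\ast$. Since the top-degree components of $c\ell_\ast(i)$ and $\widetilde{c\ell}_\ast(i)$ both equal the ambient fundamental class $[X]_W$, the top component of $\delta_\ast(i)$ vanishes; since $\varepsilon_\ast \delta_\ast(i) = |c\ell_\ast| - |\widetilde{c\ell}_\ast| = 0$, its degree-zero component vanishes. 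From the Gysin axiom together with the hypothesis $c\ell^\ast = \widetilde{c\ell}^\ast$ one obtains, for every $i \colon X \to W$ in $\mathcal{X}$ and every irreducible smooth $M \subset W$ transverse to $X$ for the relevant transversality notions (simultaneously realizable for $c\ell$ and $\widetilde{c\ell}$ by the Kleiman property), with $f \colon M \hookrightarrow W$ and $j \colon Y = M \cap X \hookrightarrow M$, the identity $f^{!} \delta_\ast(i) = c\ell^\ast(f) \cap \delta_\ast(j)$. Finally, $\delta_\ast$ is compatible with ambient isomorphisms and with enlargements of the ambient variety by the isomorphism and naturality axioms, and obeys a Leibniz-type product rule.

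\smallskip

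Next I would induct on $N := \dim_{\mathbb{C}} G$. If $N = 0$, or more generally if $\dim_{\mathbb{C}} X = 0$ (so $X$ is a finite reduced point set), then $\delta_\ast(i) = 0$ by top-degree vanishing. So let $G = G_k(\mathbb{C}^n)$ with $N > 0$ and $i \colon X \to G$ in $\mathcal{X}$ with $\dim_{\mathbb{C}} X = d \ge 1$, and assume the theorem for all Grassmannians of dimension $< N$. Write $\delta_\ast(i) = \sum_{b} \lambda_b [X_b]_G$ in the Schubert basis, the sum over $b \in \mathcal{P}(n-k,k)$; by the above $\lambda_b = 0$ for $|b| = d$ and for $|b| = 0$, and it remains to force $\lambda_b = 0$ for $0 < |b| < d$. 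For a rectangular partition $c = [p \times q]$ with $c \neq [(n-k)\times k]$, the Schubert variety $X_c \subset G$ is smooth by \Cref{thm.lakwey} and isomorphic to the strictly smaller Grassmannian $G_q(\mathbb{C}^{p+q})$, and $X_c \to G$ lies in $\mathcal{X}$; by \Cref{proposition existence of flag with transversality properties} a generic translate $M = g X_c$ is smooth, irreducible, and $\mathcal{X}$-transverse to $X$. For the associated $j \colon Y = M \cap X \to M \cong G_q(\mathbb{C}^{p+q})$, the intersection $Y$ is empty, or zero-dimensional (whence $\delta_\ast(j) = 0$ at once), or, for generic $g$, positive-dimensional and irreducible by a Bertini-type theorem for homogeneous spaces; in the last case the inductive hypothesis yields $\delta_\ast(j) = 0$ (using that $\mathcal{X}$ is closed under the relevant intersections). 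Hence $f^{!} \delta_\ast(i) = c\ell^\ast(f) \cap \delta_\ast(j) = 0$. By \Cref{lem.gysinfund}, \Cref{corollary transverse flags imply intersection product}, and \Cref{corollary empty intersection of schubert varieties}, $f^{!}[X_b]_G = [M \cap X_b]_M$ is nonzero precisely when the Schubert intersection numbers permit it, and on that range $f^{!}$ restricts to an injective (indeed, by a dimension count, bijective) map onto $H_\ast(M)$; reading off coefficients forces $\lambda_b = 0$ for every $b$ with $b_k \ge 1$ or $b_1 = n-k$. (For $G = \mathbb{P}^{n-1}$ this already finishes the induction, recovering the toy case mentioned in the introduction.)

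\smallskip

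The remaining coefficients — those of the ``thin'' Schubert classes $[X_b]_G$ with $b_1 \le n-k-1$ and $b_k = 0$, i.e.\ $b \in \mathcal{P}(n-k-1, k-1)$ — are invisible to the smooth Schubert subvarieties of $G$, and overcoming this is the main obstacle. To reach them I would invoke the Segre product of Grassmannians (\Cref{proposition segre product of schubert varieties}) together with the Intersection Box Extension Principle (\Cref{corollary twisting rule}). Writing $G$ as the Segre image $S(G' \times G'')$ of a product of strictly smaller Grassmannians — for instance $n'' = 2$, $k'' = 1$, so $G'' = \mathbb{P}^1$ and $G' = G_{k-1}(\mathbb{C}^{n-2})$ — one forms, for a suitable Schubert variety $X_{c''} \subset G''$, the characteristic subvariety obtained by intersecting the Segre-embedded product $S(X \times X_{c''})$ with an appropriately chosen \emph{smooth} (rectangular) Schubert variety of the larger ambient Grassmannian. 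By the Segre product formula this intersection decomposes as a Segre image of products of intersections living in $G'$ and $G''$, so its characteristic class is computed by the inductive hypothesis; on the other hand, the Gysin axiom applied in the larger Grassmannian — using base change (\Cref{proposition gysin compatible with inclusions}) and the compatibility of $\delta_\ast$ with Segre pushforward via the isomorphism and naturality axioms — expresses the same quantity through $\delta_\ast(i)$. Matching the two expressions and extracting the thin coefficients by the duality of Schubert classes (\Cref{corollary empty intersection of schubert varieties}) forces the remaining $\lambda_b$ to vanish.

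\smallskip

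The technical heart, and the step I expect to be the most delicate, is the precise organization of this last reduction: choosing the decomposition $V = V' \oplus V''$, the auxiliary partition $c''$, and the ambient smooth Schubert variety so that the Segre product formula reproduces exactly the intersection relevant to a given thin class $b$; verifying that the resulting characteristic subvarieties are again admissible inclusions in $\mathcal{X}$ of compact subvarieties in strictly smaller Grassmannians (Richardson-type varieties, irreducible by \Cref{remark richardson irreducible} and its generalizations), so that the inductive hypothesis applies; and checking that the recursion terminates on genera of explicitly constructed varieties, at which point the hypothesis $|c\ell_\ast| = |\widetilde{c\ell}_\ast|$ is consumed. Once all $\lambda_b$ are shown to vanish we get $\delta_\ast(i) = 0$, which closes the induction; \Cref{main result on Gysin coherent characteristic classes} is then the special case in which $\mathcal{X}$ is the family of all inclusions of compact irreducible subvarieties in smooth varieties.
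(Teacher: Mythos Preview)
Your plan shares the outer induction on $\dim_{\mathbb{C}} G$ with the paper, but there is a genuine gap in how you invoke the inductive hypothesis. In your second paragraph, after intersecting $X$ with a smooth rectangular Schubert variety $M$ to get $j \colon Y \to M$, you conclude $\delta_\ast(j) = 0$ by induction ``(using that $\mathcal{X}$ is closed under the relevant intersections)''. But the axioms for $\mathcal{X}$ do \emph{not} include closure under transverse intersection: they only require that Schubert inclusions lie in $\mathcal{X}$ and that $\mathcal{X}$ is closed under products, ambient isomorphisms, and enlargement of the ambient smooth variety. So $j$ need not lie in $\mathcal{X}$, and since the theorem asserts $c\ell_\ast = \widetilde{c\ell}_\ast$ only for inclusions in $\mathcal{X}$, the inductive hypothesis tells you nothing about $\delta_\ast(j)$. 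The same problem recurs in your third paragraph (``its characteristic class is computed by the inductive hypothesis'' for the characteristic subvariety). You flag this in your last paragraph as a point to ``verify'', but for restrictive $\mathcal{X}$ it is simply false, and even for the maximal $\mathcal{X}$ your Bertini-type irreducibility claim for $Y = M \cap X$ with $X$ an arbitrary irreducible subvariety is not a standard result.

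The paper sidesteps this completely: it never needs $\delta_\ast(j) = 0$ for any intersection. Instead it proves a recursive formula (\Cref{main theorem normally nonsingular expansion}) expressing each coefficient $\lambda^{a'}_{i'}$ as the \emph{genus} $|c\ell_\ast|$ of a characteristic subvariety minus a correction term involving only (i) coefficients $\lambda^{b''}_{i''}$ for an auxiliary \emph{Schubert} inclusion $i''$ into a strictly smaller Grassmannian --- Schubert inclusions are in $\mathcal{X}$ by axiom, so the outer induction applies --- and (ii) coefficients $\lambda^{b'}_{i'}$ with $|b'| > |a'|$, handled by an inner downward induction on $|b'|$. The genus of the characteristic subvariety equals $|\widetilde{c\ell}_\ast|$ of the same subvariety directly by hypothesis, with no need for that inclusion to be in $\mathcal{X}$; only the degree-zero part of $c\ell_\ast(j)$ is ever extracted from the Gysin formula. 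Your two-stage scheme (rectangular Schuberts first, then Segre for the thin classes) is closer in spirit to the worked example in \Cref{l class of x 3 2 1} than to the general proof, and to make it go through you would need either a stronger closure hypothesis on $\mathcal{X}$ or a reorganisation that, like the paper, uses only genus information from the transverse intersections.
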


The proof of this result is provided in \Cref{proof of main theorem} and requires the technique of normally nonsingular expansions developed in the next section.

Note that \Cref{main result on Gysin coherent characteristic classes with respect to x} implies \Cref{main result on Gysin coherent characteristic classes} of the introduction by taking $\mathcal{X}$ to be the family of all inclusions of compact irreducible subvarieties in smooth varieties.
\section{Normally Nonsingular Expansion}\label{Preparatory computations}
In this section, we establish a recursive formula for the computation of Gysin coherent characteristic classes in ambient Grassmannians (see \Cref{main theorem normally nonsingular expansion} below).
Its proof will be provided in \Cref{proof of normally nonsingular expansion}.

Recall that the rational homology groups $H_{\ast}(G; \mathbb{Q})$ of the Grassmannian $G = G_{k}(\mathbb{C}^{m+k})$ determined by integers $m, k \geq 0$ are concentrated in even degrees, and a basis of $H_{2r}(G; \mathbb{Q})$ is given by the set of all fundamental classes $[X_{a}]_{G}$ of Schubert subvarieties $X_{a} \subset G$, $a \in \mathcal{P}(m, k)$, of complex dimension $|a| = r$.
Now let $c \ell$ be a Gysin coherent characteristic class with respect to $\mathcal{X}$.
Then, for any inclusion $i \colon X \hookrightarrow G$ of an irreducible closed algebraic subvariety $X$ of complex dimension $d$ in a Grassmannian $G = G_{k}(\mathbb{C}^{m+k})$ determined by integers $m, k \geq 0$, the only nonzero components of $c \ell_{\ast}(i) \in H_{\ast}(G; \mathbb{Q})$ are the components $c \ell_{2r}(i)$ for $r \in \{0, \dots, d\}$, and we can uniquely write
\begin{align}\label{equation l class linear combination of schubert cycles}
c \ell_{2r}(i) &= \sum_{\substack{a \in \mathcal{P}(m, k), \\ |a| = r}} \lambda^{a}_{i} \cdot [X_{a}]_{G}, \qquad \lambda^{a}_{i} \in \mathbb{Q}.
\end{align}

The main result of this section is the following recursive formula for the family of coefficients $\{\lambda^{a}_{i}\}_{i, a}$ as introduced in (\ref{equation l class linear combination of schubert cycles}) associated to a Gysin coherent characteristic class.

\begin{thm}\label{main theorem normally nonsingular expansion}
Let $i' \colon X' \hookrightarrow G'$ be the inclusion of an irreducible closed algebraic subvariety $X'$ of dimension $d'$ in the Grassmannian $G' = G_{k'}(\mathbb{C}^{m'+k'})$ determined by integers $m', k' \geq 0$.
Fix $a' \in \mathcal{P}(m', k')$ with $|a'| \leq d'$, and set $l := d' - |a'|$.
We suppose that $k' > 0$, and as shown \Cref{figure complementary partition}, we define the integers $m'', k'' \geq 0$ by
\begin{align*}
m'' &:= a_{1}', \\
k'' &:= k' - \operatorname{max} \{t \in \mathbb{Z} | 1 \leq t \leq k', \; a_{t}' = a_{1}'\},
\end{align*}
and the partition $a'' \in \mathcal{P}(m'', k'')$ by $a_{t}'' = m'' - a_{k'+1-t}'$ for $1 \leq t \leq k''$.
Let $i'' \colon X_{a''}(D_{\ast}'') \hookrightarrow G''$ be the inclusion into the Grassmannian $G'' = G_{k''}(\mathbb{C}^{m''+k''})$ of the Schubert subvariety $X_{a''}(D_{\ast}'') \subset G''$ determined by a fixed flag $D_{\ast}''$ on $\mathbb{C}^{m''+k''}$.
Then, given a Gysin coherent characteristic class $c \ell$ with respect to $\mathcal{X}$ such that we have $i' \in \mathcal{X}$, the associated family of coefficients $\{\lambda^{a}_{i}\}_{i, a}$ introduced in (\ref{equation l class linear combination of schubert cycles}) satisfies
\begin{equation}\label{equation main result recuvrsive coefficient formula}
\lambda^{a'}_{i'} = |c\ell_{\ast}|(i', i'') - \sum_{r = 0}^{l} \sum_{\substack{0 \leq r' < l, \\ 0 \leq r'' \leq l, \\ r' + r'' = r}} \sum_{\substack{b' \in \mathcal{P}(m', k'), \\ |b'| = d'-r'}} \sum_{\substack{b'' \in \mathcal{P}(m'', k''), \\ |b''| = |a''|-r''}} \lambda^{b'}_{i'} \lambda^{b''}_{i''} \cdot \langle c \ell^{\ast} \rangle(b', b''),
\end{equation}
where $|c\ell_{\ast}|(i', i'') \in \mathbb{Q}$ will be constructed in \Cref{main theorem signature independent of choices}, and $\langle c \ell^{\ast} \rangle(b', b'') \in \mathbb{Q}$ will be constructed in \Cref{proposition invariance of integral coefficient for recursive formula}.
\end{thm}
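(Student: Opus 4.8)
The plan is to reduce the computation of $\lambda^{a'}_{i'}$ to a single Gysin restriction in a large Grassmannian produced by a Segre product, and then to read off \eqref{equation main result recuvrsive coefficient formula} by expanding both sides of the Gysin axiom in Schubert classes. Concretely, set $k := k'+k''$, $n := (m'+k')+(m''+k'')$, $m := m'+m'' = n-k$, $G := G_{k}(\mathbb{C}^{n})$, fix a decomposition $\mathbb{C}^{n} = \mathbb{C}^{m'+k'}\oplus\mathbb{C}^{m''+k''}$, and let $S\colon G'\times G''\to G$ be the Segre product of Grassmannians from \Cref{proposition segre product of schubert varieties}. I would work with $i := S\circ(i'\times i'')\colon X := S(X'\times X_{a''}(D_{\ast}''))\hookrightarrow G$, the inclusion of an irreducible compact subvariety of complex dimension $d := d'+|a''|$; that $i\in\mathcal{X}$ follows from $i'\in\mathcal{X}$ (hypothesis), $i''\in\mathcal{X}$ (inclusion of a Schubert variety), and the closure of $\mathcal{X}$ under products, under isomorphisms of ambient varieties ($G'\times G''\cong S(G'\times G'')$), and under enlargement of the ambient variety ($S(G'\times G'')\subset G$).

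\emph{Segre expansion of $c\ell_{\ast}(i)$.} By multiplicativity, isomorphism invariance and naturality, $c\ell_{\ast}(i) = S_{\ast}(c\ell_{\ast}(i')\times c\ell_{\ast}(i''))$. Taking one Schubert factor in \Cref{proposition segre product of schubert varieties} to be the full Grassmannian and invoking \Cref{corollary transverse flags imply intersection product}, one gets $S_{\ast}([X_{b'}]_{G'}\times[X_{b''}]_{G''}) = [X_{b'\sqcup b''}]_{G}\cdot[X_{\rho}]_{G}$, where $\rho$ is the $2$-block partition complementary to the rectangle $[m'\times k'']$ in $\mathcal{P}(m,k)$. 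Substituting \eqref{equation l class linear combination of schubert cycles} and $c\ell_{2d''}(i'') = [X_{a''}]_{G''}$, this yields $c\ell_{\ast}(i) = \sum_{c}\mu^{c}[X_{c}]_{G}$ with each $\mu^{c}$ a $\mathbb{Z}$-bilinear expression in $\{\lambda^{b'}_{i'}\}$ and $\{\lambda^{b''}_{i''}\}$, supported on pairs with $|b'|+|b''| = |c|$ and $|b''|\le|a''|$.

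\emph{Isolating $\lambda^{a'}_{i'}$ and the Gysin step.} The construction of $(m'',k'',a'')$ (\Cref{figure complementary partition}) is designed so that the product $[X_{a'\sqcup a''}]_{G}\cdot[X_{\rho}]_{G}$ contains a distinguished Schubert class $[X_{\hat{e}}]_{G}$ with $|\hat{e}| = |a'|+|a''|$ whose complementary partition $e\in\mathcal{P}(m,k)$ is a rectangle, and so that in $\mu^{\hat{e}} = \sum\lambda^{b'}_{i'}\lambda^{b''}_{i''}N^{\hat{e}}_{b',b''}$ (sum over $|b'|+|b''|=|a'|+|a''|$) one has $N^{\hat{e}}_{a',a''} = 1$ while every other contributing pair satisfies $|b'|>|a'|$; hence $\lambda^{a'}_{i'} = \mu^{\hat{e}} - \sum_{(b',b'')\ne(a',a'')}\lambda^{b'}_{i'}\lambda^{b''}_{i''}N^{\hat{e}}_{b',b''}$. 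Since $e$ is a rectangle, $M := X_{e}(g\cdot F_{\ast})$ is a smooth irreducible Schubert variety (\Cref{thm.lakwey}), and for generic $g$ it is $\mathcal{X}$-transverse to $X$ by the Kleiman-type axiom for $\mathcal{X}$-transversality (cf. \Cref{proposition existence of flag with transversality properties}); so the Gysin axiom of \Cref{definition gysin coherent characteristic classes} applies and gives $f^{!}c\ell_{\ast}(i) = c\ell^{\ast}(f)\cap c\ell_{\ast}(j)$ for $j\colon Y := M\cap X\hookrightarrow M$, with $Y$ pure-dimensional compact of complex dimension $\dim M+\dim X-\dim G = l$ (using $|e| = mk-|a'|-|a''|$).

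\emph{Reading off the formula; the main obstacle.} Applying $\varepsilon_{\ast}$: on the left, $f^{!}$ lowers homological degree by $2\codim_{G}M = 2|\hat{e}|$ and takes fundamental classes of transverse Schubert intersections to fundamental classes (\Cref{lem.gysinfund}), so $\varepsilon_{\ast}f^{!}c\ell_{\ast}(i) = \mu^{\hat{e}}$ (the one surviving term corresponds to $[X_{\hat{e}}]_{G}\cdot[X_{e}]_{G} = [\pt]_{G}$, by \Cref{corollary empty intersection of schubert varieties}); on the right, $\varepsilon_{\ast}(c\ell^{\ast}(f)\cap c\ell_{\ast}(j)) = |c\ell_{\ast}|(j) + \sum_{s=1}^{l}\langle c\ell^{2s}(f),c\ell_{2s}(j)\rangle$. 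Since $c\ell^{\ast}(f) = 1+(\text{positive degree})$ is a unit in $H^{\ast}(M;\mathbb{Q})$, one has $c\ell_{\ast}(j) = c\ell^{\ast}(f)^{-1}\cap f^{!}c\ell_{\ast}(i)$; combining this with $f^{!}c\ell_{2r}(i) = \sum_{c}\mu^{c}[X_{c}\cap M]_{M}$ and re-expanding the $[X_{c}\cap M]_{M}$ on the Schubert basis of $M$ expresses each $\langle c\ell^{2s}(f),c\ell_{2s}(j)\rangle$ as a finite $\mathbb{Q}$-combination of the $\mu^{c}$ with $|\hat{e}|<|c|\le d$, the coefficients depending only on $c\ell^{\ast}$ through $c\ell^{\ast}(\nu_{M\subset G})$. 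Feeding back the expansions of the $\mu^{c}$ and of $\mu^{\hat{e}}$ and solving for $\lambda^{a'}_{i'}$ produces an identity $\lambda^{a'}_{i'} = |c\ell_{\ast}|(j) - \sum\lambda^{b'}_{i'}\lambda^{b''}_{i''}\langle c\ell^{\ast}\rangle(b',b'')$, where the index set matches \eqref{equation main result recuvrsive coefficient formula} exactly: via $r' = d'-|b'|$, $r'' = |a''|-|b''|$, the conditions $0\le r'<l$, $0\le r''\le l$, $r'+r''\le l$ translate to $|a'|<|b'|\le d'$, $|b''|\le|a''|$, $|b'|+|b''|\ge|a'|+|a''|$, the equality $|b'|+|b''|=|a'|+|a''|$ arising from the Schubert-calculus correction of the third step and the strict inequality from the Gysin correction. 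Setting $|c\ell_{\ast}|(i',i'') := |c\ell_{\ast}|(j)$ and defining $\langle c\ell^{\ast}\rangle(b',b'')$ to be the resulting aggregate coefficient (manifestly independent of $i'$ and $i''$) then finishes the derivation. The hard part will be the combinatorial core of the third step — proving that the explicitly constructed $a''$ really does yield a rectangle $e$ with $N^{\hat{e}}_{a',a''} = 1$ and the asserted triangularity of $N^{\hat{e}}_{b',b''}$; this comes down to controlling the Segre-restriction $S^{\ast}\sigma_{e}$ of a rectangular Schubert class in low bidegrees. A secondary point, which I would handle through the openness and density of the relevant transversality loci together with connectedness-by-isotopy of generic configurations (moving $c\ell_{\ast}$ and fundamental classes along such isotopies by isomorphism invariance), is the independence of $|c\ell_{\ast}|(i',i'')$ and $\langle c\ell^{\ast}\rangle(b',b'')$ from the auxiliary choices of $g$, $F_{\ast}$ and $D_{\ast}''$ — exactly what the forward references to \Cref{main theorem signature independent of choices} and \Cref{proposition invariance of integral coefficient for recursive formula} are meant to settle.
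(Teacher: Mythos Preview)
Your overall architecture---embed $X'\times X_{a''}$ via a Segre product into a large Grassmannian $G$, expand $c\ell_{\ast}(i)$ in Schubert classes, and then apply the Gysin axiom for an inclusion $f\colon M\hookrightarrow W$ of a suitable smooth locus to isolate $\lambda^{a'}_{i'}$---is exactly the strategy of the paper. The Segre identity $S_{\ast}([X_{b'}]_{G'}\times[X_{b''}]_{G''}) = [X_{b'\sqcup b''}]_{G}\cdot[X_{\rho}]_{G}$ with $\rho=c''\sqcup c'$ is also correct (this is \Cref{corollary twisting of partitions} with $c'=[m'\times k']$, $c''=[m''\times k'']$).

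The gap is in your ``Isolating $\lambda^{a'}_{i'}$'' step: you assert that there is a \emph{rectangular} $e\in\mathcal{P}(m,k)$ with $|e|=mk-|a'|-|a''|=mk-k'm''$, so that $M:=X_{e}$ is smooth. This fails already in small examples. Take $a'=(1,0)\in\mathcal{P}(2,2)$; then $m''=1$, $k''=1$, $a''=(1)$, $m=k=3$, and $|e|=9-2=7$, but $\mathcal{P}(3,3)$ contains no rectangle of size $7$. In general $mk-k'm''=m'k'+m'k''+m''k''$ simply does not factor as $p\cdot q$ with $p\le m$, $q\le k$. So no smooth Schubert variety of the right codimension exists, and the Gysin step as you set it up cannot be carried out. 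Consequently the ``combinatorial core'' you flag as the hard part is not merely hard---the object it is meant to produce does not exist.

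The paper circumvents this by \emph{not} insisting on a smooth Schubert variety. It takes the two-block partition $c=c'\sqcup c''=[m'\times k']\sqcup[m''\times k'']$, which has the correct size $|c|=mk-k'm''$ but gives a \emph{singular} $X_{c}$; then $W:=G\setminus\Sing X_{c}$ and $M:=X_{c}\cap W$ are smooth quasiprojective, and $f\colon M\hookrightarrow W$ is a legitimate input for the Gysin axiom. The crucial geometric point (\Cref{proposition segre avoids singular locus}) is that the Segre image $\Sigma(G'\times G'')$ already lies inside $W$, so that $X\subset W$ and $Y=M\cap X$ is compact. With this choice of $M$, the triple-intersection computation of \Cref{theorem triple intersection} gives $[X_{a''\sqcup c'}]_{G}\cdot[X_{b'\sqcup c''}]_{G}\cdot[X_{c}]_{G}=\delta_{a'b'}[\mathrm{pt}]_{G}$, which is precisely the ``$N^{\hat{e}}_{a',a''}=1$ plus triangularity'' you were hoping for---but for the singular $X_{c}$, not for a rectangle. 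The quantities $\langle c\ell^{\ast}\rangle(b',b'')$ are then Kronecker pairings on $M$ (\Cref{proposition invariance of integral coefficient for recursive formula}), and the isolation of $\lambda^{a'}_{i'}$ follows from \Cref{theorem normally nonsingular integration} together with the top-degree normalization $c\ell_{2|a''|}(i'')=[X_{a''}]_{G''}$.
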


\begin{figure}[htbp]
\centering
\fbox{\begin{tikzpicture}
\draw (0, 0) node {\includegraphics[width=0.6\textwidth]{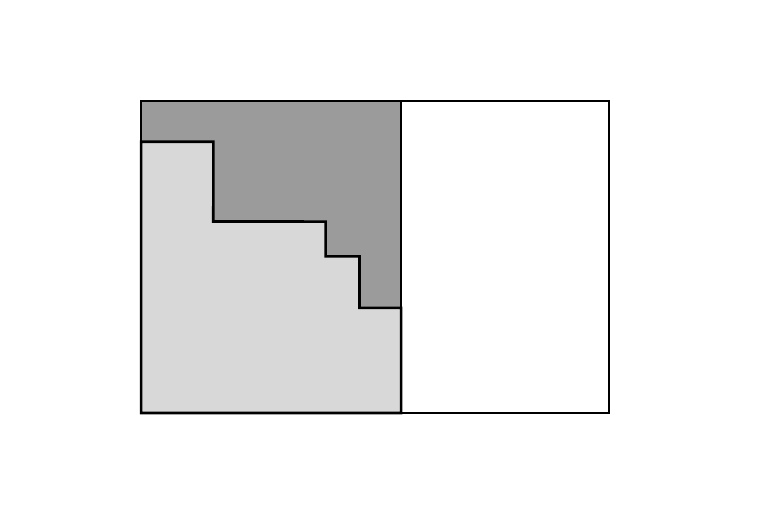}};
\draw (-1.4, -0.6) node {$D_{a'}$};
\draw (-0.9, 1.0) node {$\widetilde{D}_{a''}$};
\draw (0, -2.0) node {$m'$};
\draw (-1.05, 2.0) node {$m''$};
\draw (2.7, 0.0) node {$k'$};
\draw (0.6, 0.55) node {$k''$};
\end{tikzpicture}}
\caption{Definition of the partition $a'' \in \mathcal{P}(m'', k'')$ in terms of $a' \in \mathcal{P}(m', k')$.
The Young diagram $D_{a''}$ is obtained by rotating the shown diagram $\widetilde{D}_{a''}$ by $180$ degrees.}
\label{figure complementary partition}
\end{figure}

Note that the coefficients $\lambda^{b'}_{i'}$ and $\lambda^{b''}_{i''}$ that appear in (\ref{equation main result recuvrsive coefficient formula}) are recursively known, which will be exploited in the proof of the Uniqueness Theorem (\Cref{main result on Gysin coherent characteristic classes with respect to x}) in \Cref{proof of main theorem} (compare \Cref{figure induction scheme}).
Namely, $\lambda^{b''}_{i''}$ is a coefficient of $c \ell_{2 |b''|}(i'')$, where we note that $i''$ embeds in a Grassmannian of strictly smaller dimension than $i'$.
Furthermore, $\lambda^{b'}_{i'}$ is a coefficient of $c \ell_{2 |b'|}(i')$, where we note that $|b'| > d'-l = a'$ since $r'$ is strictly less than the codimension $l$.

The next section defines the normally nonsingular integration $\langle c \ell^{\ast} \rangle(b', b'') \in \mathbb{Q}$, while \Cref{Characteristic Subvarieties} defines the genera $|c\ell_{\ast}|(i', i'') \in \mathbb{Q}$ of characteristic subvarieties.

\subsection{Normally Nonsingular Integration}\label{Normally Nonsingular Integration}
Given a Gysin coherent characteristic class $c \ell = (c \ell^{\ast}, c \ell_{\ast})$ with respect to $\mathcal{X}$, we construct in \Cref{proposition invariance of integral coefficient for recursive formula} below a map
\begin{equation}\label{equation product integral}
\langle c \ell^{\ast} \rangle \colon \mathcal{P}(m', k') \times \mathcal{P}(m'', k'') \rightarrow \mathbb{Q},
\end{equation}
and show that it does not depend on the choices involved in its construction.
The map (\ref{equation product integral}) and its properties (see \Cref{theorem normally nonsingular integration}) will be important for the proof of our main result in \Cref{proof of normally nonsingular expansion}.
In view of \Cref{remark integral expression}, we may call the map (\ref{equation product integral}) normally nonsingular integration.

First, let us fix some notation that will be used throughout this section.
Let $m', k' \geq 0$ and $m'', k'' \geq 0$ be integers.
Define the integers $m, k \geq 0$ by $m = m'+m''$ and $k=k'+k''$, and the integers $n, n', n'' \geq 0$ by $n = m+k$, $n' = m'+k'$, and $n'' = m''+k''$.
Using the notation introduced in \Cref{Intersection of Schubert varieties}, let $c' = [m' \times k'] \in \mathcal{P}(m', k')$, $c'' = [m'' \times k''] \in \mathcal{P}(m'', k'')$, and $c = c' \sqcup c'' \in \mathcal{P}(m, k)$ (see \Cref{figure proof partitions c}(a)).

\begin{figure}[htbp]
\centering
\fbox{\begin{tikzpicture}
\draw (0, 0) node {\includegraphics[width=1.0\textwidth]{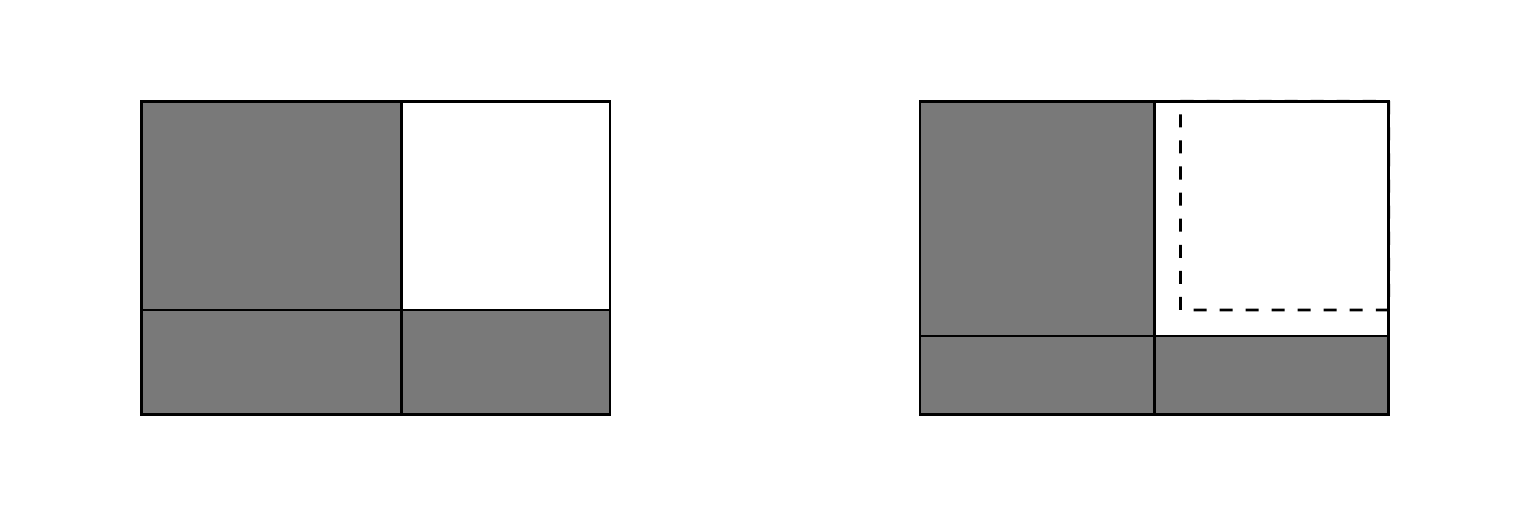}};
\draw (-5.8, 1.8) node {(a)};
\draw (-3.5, -1.9) node {$D_{c}$};
\draw (-5.4, 0.5) node {$k' \begin{cases}
\quad \\
\quad \\
\quad
\end{cases}$};
\draw (-5.4, -0.8) node {$k''
\begin{cases}
\quad
\end{cases}$};
\draw (-4.2, 1.8) node {$\overbrace{\quad \quad \quad \quad \quad \quad}^{m'}$};
\draw (-2.2, 1.8) node {$\overbrace{\quad \quad \quad \quad}^{m''}$};

\draw (0.9, 1.8) node {(b)};
\draw (3.5, -1.9) node {$D_{\widetilde{c}}$};
\draw (1.1, 0.35) node {$k'+1 \begin{cases}
\quad \\
\quad \\
\quad \\
\quad
\end{cases}$};
\draw (1.1, -1.0) node {$k''-1
\begin{cases}
\quad
\end{cases}$};
\draw (2.3, 1.8) node {$\overbrace{\quad \quad \quad \quad \quad}^{m'-1}$};
\draw (4.3, 1.8) node {$\overbrace{\quad \quad \quad \quad \quad}^{m''+1}$};
\end{tikzpicture}}
\caption{(a) Definition of the Young diagram $D_{c}$.
(b) Definition of the Young diagram $D_{\widetilde{c}}$.}
\label{figure proof partitions c}
\end{figure}

Let $G = G_{k}(\mathbb{C}^{n})$, $G' = G_{k'}(\mathbb{C}^{n'})$, and $G'' = G_{k''}(\mathbb{C}^{n''})$.
Given any monomorphisms $\iota' \colon \mathbb{C}^{n'} \rightarrow \mathbb{C}^{n}$ and $\iota'' \colon \mathbb{C}^{n''} \rightarrow \mathbb{C}^{n}$ that satisfy $\mathbb{C}^{n} = \operatorname{im}(\iota') \oplus \operatorname{im}(\iota'')$, we define the Segre product
\begin{equation}\label{equation segre product in terms of monomorphisms}
\Sigma = \Sigma_{\iota', \iota''}^{k', k''} \colon G' \times G'' \rightarrow G, \qquad \Sigma(P', P'') = \iota'(P') \oplus \iota''(P''),
\end{equation}
of $G'$ and $G''$ in the ambient Grassmannian $G$.
Note that the above Segre product $\Sigma$ is related to the Segre product $S$ of \Cref{proposition segre product of schubert varieties} by the commutative diagram
\begin{equation}\label{equation compatibility of Segre products}
\xymatrix{
G' \times G'' \ar[rr]_{\cong}^{\Phi' \times \Phi''} \ar@{^{(}->}[drr]_{\Sigma^{k', k''}_{\iota', \iota''}}^{(\ref{equation segre product in terms of monomorphisms})} & & G_{k'}(V') \times G_{k''}(V'') \ar@{^{(}->}[d]^{S^{k', k''}_{V', V''}}_{(\ref{equation segre product})} \\
 & & G,
}
\end{equation}
where $V' = \operatorname{im}(\iota')$ and $V'' = \operatorname{im}(\iota'')$, and $\Phi' \colon G' = G_{k'}(\mathbb{C}^{n'}) \cong G_{k'}(V')$ and $\Phi'' \colon G'' = G_{k''}(\mathbb{C}^{n''}) \cong G_{k''}(V'')$ are the isomorphisms given by $P' \mapsto \iota'(P')$ and $P'' \mapsto \iota''(P'')$, respectively.

In the following, by a homotopy of monomorphisms $U \rightarrow V$ of complex vector spaces we mean a continuous map $\alpha \colon [0, 1] \times U \rightarrow V$, $(t, v) \mapsto \alpha(t, u) =: \alpha_{t}(u)$, such that $\alpha_{t} \colon U \rightarrow V$ is a monomorphism for all $t \in [0, 1]$.

\begin{lemma}\label{lemma homotopy of monomorphisms and induced segre products}
If $\alpha'$ is a homotopy of monomorphisms $\mathbb{C}^{n'} \rightarrow \mathbb{C}^{n}$ and $\alpha''$ is a homotopy of monomorphisms $\mathbb{C}^{n''} \rightarrow \mathbb{C}^{n}$ such that $\mathbb{C}^{n} = \operatorname{im}(\alpha_{t}') \oplus \operatorname{im}(\alpha_{t}'')$ for all $t \in [0, 1]$, then the associated Segre products $\Sigma_{\alpha_{t}', \alpha_{t}''}^{k', k''}$ defined in (\ref{equation segre product in terms of monomorphisms}) form a homotopy
$$
\Sigma_{\alpha', \alpha''}^{k', k''} \colon [0, 1] \times G' \times G'' \rightarrow G, \qquad (t, P', P'') \mapsto \Sigma_{\alpha_{t}', \alpha_{t}''}^{k', k''}(P', P'').
$$
\end{lemma}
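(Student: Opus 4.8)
The plan is to realize the assignment $(t,P',P'')\mapsto \Sigma_{\alpha_t',\alpha_t''}^{k',k''}(P',P'')$ as a map \emph{induced on quotients} from an evidently continuous map of Stiefel manifolds, and then to invoke that the Stiefel--Grassmannian projections are open quotient maps. Write $\operatorname{Fr}_j(\mathbb{C}^N)$ for the Stiefel manifold of injective linear maps $\mathbb{C}^j\to\mathbb{C}^N$ (equivalently, rank-$j$ matrices in $\operatorname{Hom}(\mathbb{C}^j,\mathbb{C}^N)$), an open subset of $\operatorname{Hom}(\mathbb{C}^j,\mathbb{C}^N)$, and let $\pi_j^N\colon\operatorname{Fr}_j(\mathbb{C}^N)\to G_j(\mathbb{C}^N)$, $F\mapsto\operatorname{im}(F)$, denote the canonical projection. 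Each $\pi_j^N$ is the orbit map of the free right $GL_j(\mathbb{C})$-action by precomposition, hence is an open continuous surjection. Abbreviate $\pi'=\pi_{k'}^{n'}$, $\pi''=\pi_{k''}^{n''}$, $\pi=\pi_k^n$, and identify $\mathbb{C}^k=\mathbb{C}^{k'}\oplus\mathbb{C}^{k''}$.

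Next I would define
$$
\widetilde{\Sigma}\colon [0,1]\times\operatorname{Fr}_{k'}(\mathbb{C}^{n'})\times\operatorname{Fr}_{k''}(\mathbb{C}^{n''})\longrightarrow \operatorname{Hom}(\mathbb{C}^k,\mathbb{C}^n),\qquad \widetilde{\Sigma}(t,F',F'')=(\alpha_t'\circ F')\oplus(\alpha_t''\circ F'').
$$
Since a continuous family of linear maps between finite-dimensional spaces is continuous in its matrix entries (evaluate on a fixed basis), $\widetilde{\Sigma}$ is continuous, being assembled from matrix multiplication and block-stacking. Using the hypothesis $\mathbb{C}^n=\operatorname{im}(\alpha_t')\oplus\operatorname{im}(\alpha_t'')$ together with the injectivity of $\alpha_t'$, $\alpha_t''$, $F'$, $F''$, a one-line kernel computation shows $\widetilde{\Sigma}(t,F',F'')$ is injective for every $(t,F',F'')$; hence $\widetilde{\Sigma}$ takes values in the open subset $\operatorname{Fr}_k(\mathbb{C}^n)$ and is continuous as a map into $\operatorname{Fr}_k(\mathbb{C}^n)$. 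By construction $\operatorname{im}(\widetilde{\Sigma}(t,F',F''))=\alpha_t'(P')\oplus\alpha_t''(P'')=\Sigma_{\alpha_t',\alpha_t''}^{k',k''}(P',P'')$ where $P'=\pi'(F')$ and $P''=\pi''(F'')$ (compare (\ref{equation segre product in terms of monomorphisms})). In particular $\pi\circ\widetilde{\Sigma}$ depends only on $(t,\pi'(F'),\pi''(F''))$, so it factors through $\operatorname{id}_{[0,1]}\times\pi'\times\pi''$, and the induced set map $[0,1]\times G'\times G''\to G$ is precisely $(t,P',P'')\mapsto\Sigma_{\alpha_t',\alpha_t''}^{k',k''}(P',P'')$, i.e. the map $\Sigma_{\alpha',\alpha''}^{k',k''}$ of the statement.

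Finally I would conclude by a quotient-map argument: $\operatorname{id}_{[0,1]}\times\pi'\times\pi''$ is a product of open continuous surjections, hence itself open, continuous and surjective, hence a quotient map. Therefore a map out of $[0,1]\times G'\times G''$ is continuous as soon as its precomposition with $\operatorname{id}_{[0,1]}\times\pi'\times\pi''$ is continuous; applying this to $\pi\circ\widetilde{\Sigma}$ yields continuity of $\Sigma_{\alpha',\alpha''}^{k',k''}$, which is the assertion.

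The only genuinely delicate point is the last one: a product of two quotient maps need not be a quotient map, so it is essential here that the Stiefel projections $\pi'$, $\pi''$ are \emph{open}, which upgrades the product to a quotient map. An alternative that sidesteps this issue entirely is to compute directly in the standard affine charts of $G'$, $G''$ and $G$, in which planes are represented by matrices containing a fixed identity submatrix and passing to the direct sum corresponds to a block-diagonal normalization; this makes continuity of $\Sigma_{\alpha',\alpha''}^{k',k''}$ a routine (if slightly more computational) verification, and I would keep it in reserve as a fallback while presenting the Stiefel argument.
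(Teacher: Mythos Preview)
Your argument is correct. You lift the putative homotopy to Stiefel manifolds, check continuity there by elementary matrix considerations, and then push down via the quotient maps $\pi',\pi'',\pi$, taking care that the product $\operatorname{id}_{[0,1]}\times\pi'\times\pi''$ is still a quotient map because the Stiefel projections are open. All steps are sound, including your diagnosis of the only delicate point.

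The paper's proof is shorter and organised around a different observation: combine $\alpha'$ and $\alpha''$ into a single homotopy $\alpha_t\colon\mathbb{C}^{n'}\oplus\mathbb{C}^{n''}\to\mathbb{C}^n$, note that each $\alpha_t$ is an isomorphism by the direct-sum hypothesis, and set $g_t=\alpha_t\circ\alpha_0^{-1}\in GL_n(\mathbb{C})$. One then checks the identity $\Sigma_{\alpha_t',\alpha_t''}^{k',k''}(P',P'')=g_t\cdot\Sigma_{\alpha_0',\alpha_0''}^{k',k''}(P',P'')$, so the whole homotopy is the composite of the fixed continuous map $\Sigma_{\alpha_0',\alpha_0''}^{k',k''}$ with the continuous $GL_n(\mathbb{C})$-action on $G$ along the path $t\mapsto g_t$. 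This avoids any quotient-map subtlety by outsourcing continuity to the standard fact that $GL_n(\mathbb{C})$ acts continuously on $G_k(\mathbb{C}^n)$. Your Stiefel approach is more self-contained and would work verbatim in contexts where one cannot reduce to a single ambient group action, while the paper's approach is quicker once that action is available and makes the geometric content (the Segre products are all $GL_n$-translates of one another) more visible.
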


\begin{proof}
Let $\alpha$ be the homotopy of monomorphisms $\mathbb{C}^{n'} \oplus \mathbb{C}^{n''} \rightarrow \mathbb{C}^{n}$ given by $(t, v', v'') \mapsto \alpha'_{t}(v') + \alpha''_{t}(v'')$.
Note that $\mathbb{C}^{n} = \operatorname{im}(\alpha_{t}') \oplus \operatorname{im}(\alpha_{t}'')$ implies that $\alpha_{t}$ is an isomorphism for all $t \in [0, 1]$.
Hence, $t \mapsto g_{t} = \alpha_{t} \circ \alpha_{0}^{-1}$ is a continuous path in $GL_{n}(\mathbb{C})$ such that $g_{0} = \operatorname{id}_{\mathbb{C}^{n}}$.
It follows that the map $\Sigma_{\alpha', \alpha''}^{k', k''}$ is given by
$$
(t, P', P'') \mapsto \Sigma_{\alpha_{t}', \alpha_{t}''}^{k', k''}(P', P'') = \alpha_{t}(P', P'') = g_{t} \cdot \alpha_{0}(P', P'') = g_{t} \cdot \Sigma_{\alpha_{0}', \alpha_{0}''}^{k', k''}(P', P'').
$$
Then, the claim follows by noting that $GL_{n}(\mathbb{C})$ acts topologically on $G = G_{k}(\mathbb{C}^{n})$.
\end{proof}

\begin{prop}\label{proposition segre avoids singular locus}
Let $F_{\ast}$ be a flag on $\mathbb{C}^{n}$.
We define the smooth irreducible quasiprojective complex algebraic variety $W = P \setminus Z$, where $Z$ denotes the singular locus of the Schubert subvariety $X_{c}(F_{\ast}) \subset G$.
Then, the Segre product $\Sigma = \Sigma_{\iota', \iota''}^{k', k''} \colon G' \times G'' \rightarrow G$ defined in (\ref{equation segre product in terms of monomorphisms}) restricts under the condition $\mathbb{C}^{n} = F_{n'} \oplus \operatorname{im}(\iota'')$ to a closed embedding $\Sigma^{W} \colon G' \times G'' \rightarrow W$.
\end{prop}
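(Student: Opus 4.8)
The plan is to split the assertion into three steps: (i) that $\Sigma$ is a closed algebraic embedding into $G$ for \emph{any} flag $F_\ast$; (ii) that under the standing hypothesis $\mathbb{C}^n = F_{n'} \oplus \operatorname{im}(\iota'')$ the image of $\Sigma$ is disjoint from the singular locus $Z$ of $X_c(F_\ast)$; and (iii) that these two facts combine to yield the closed embedding $\Sigma^W \colon G' \times G'' \to W$.

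For step (i) I would simply invoke diagram (\ref{equation compatibility of Segre products}): with $V' = \operatorname{im}(\iota')$ and $V'' = \operatorname{im}(\iota'')$ one has $\Sigma^{k',k''}_{\iota',\iota''} = S^{k',k''}_{V',V''} \circ (\Phi' \times \Phi'')$, a composition of the isomorphism $\Phi' \times \Phi''$ with the closed algebraic embedding $S^{k',k''}_{V',V''}$ of \Cref{proposition segre product of schubert varieties}.

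Step (ii) is the heart of the argument, and I would begin by making $Z$ explicit. The partition $c = c' \sqcup c'' = [m' \times k'] \sqcup [m'' \times k'']$ has Young diagram consisting of $k''$ rows of length $m = m'+m''$ stacked above $k'$ rows of length $m'$ (\Cref{figure proof partitions c}(a)). We may assume $m', m'', k', k'' > 0$, since otherwise $D_c$ is a rectangle, $X_c(F_\ast)$ is nonsingular, $Z = \varnothing$, $W = G$, and the statement reduces to step (i). In the remaining case $c$ takes exactly $r = 2$ distinct values, so \Cref{thm.lakwey} gives $Z = \Sing X_c(F_\ast) = X_{c^{(1)}}(F_\ast)$ with $c^{(1)} = (\underbrace{m, \dots, m}_{k''-1}, \underbrace{m'-1, \dots, m'-1}_{k'+1})$, the partition $\widetilde{c}$ of \Cref{figure proof partitions c}(b). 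Now let $P = \Sigma(P', P'') = \iota'(P') \oplus \iota''(P'') \in \operatorname{im}(\Sigma)$ be arbitrary. The hypothesis $\mathbb{C}^n = F_{n'} \oplus \operatorname{im}(\iota'')$ says precisely that the quotient map $q \colon \mathbb{C}^n \to \mathbb{C}^n / F_{n'}$ restricts to an isomorphism on $\operatorname{im}(\iota'') \supseteq \iota''(P'')$; hence $\dim_{\mathbb{C}} q(P) \geq \dim_{\mathbb{C}} \iota''(P'') = k''$ and therefore $\dim_{\mathbb{C}}(P \cap F_{n'}) = k - \dim_{\mathbb{C}} q(P) \leq k - k'' = k'$. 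On the other hand, the conditions defining $X_{c^{(1)}}(F_\ast)$ include, at the index $i = k'+1$ (valid since $k'' \geq 1$, and then $k+1-i = k''$), the requirement $\dim_{\mathbb{C}}(P \cap F_{c^{(1)}_{k''} + k'+1}) \geq k'+1$; since $c^{(1)}_{k''} = m'-1$, the subspace here is $F_{(m'-1)+(k'+1)} = F_{n'}$, so every $P \in Z$ satisfies $\dim_{\mathbb{C}}(P \cap F_{n'}) \geq k'+1$. Comparing the two bounds gives $\operatorname{im}(\Sigma) \cap Z = \varnothing$, i.e. $\operatorname{im}(\Sigma) \subseteq W$.

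Finally, for step (iii): $W = G \setminus Z$ is a nonempty Zariski-open subset of the smooth irreducible projective variety $G$, hence is itself smooth, irreducible and quasiprojective; and the image $\Sigma(G' \times G'')$, being a closed subvariety of $G$ contained in $W$, is closed in $W$ as well, with $\Sigma$ an isomorphism onto it, so $\Sigma^W$ is a closed embedding. I expect the only genuinely delicate point to be the explicit identification of $Z$ via \Cref{thm.lakwey} together with the index bookkeeping that isolates $i = k'+1$ as the one Schubert condition violated by $\operatorname{im}(\Sigma)$; once that index is pinned down, both inequalities for $\dim_{\mathbb{C}}(P \cap F_{n'})$ are one-line linear algebra.
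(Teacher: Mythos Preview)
Your proof is correct, and in step (ii) you take a genuinely more direct route than the paper. Both arguments identify $Z$ via \Cref{thm.lakwey} as $X_{\widetilde{c}}(F_\ast)$ with $\widetilde{c} = c^{(1)}$ as you compute, and both ultimately hinge on the same critical index $k'+1$. The difference lies in how the contradiction is extracted. The paper first constructs auxiliary flags: using $\mathbb{C}^n = F_{n'} \oplus V''$ it builds flags $F'_\ast$ on $V'$ and $F''_\ast$ on $V''$, chooses transverse companions $E'_\ast$, $E''_\ast$, and sets $E_\ast = E''_\ast \oplus E'_\ast$, which is then transverse to $F_\ast$. It then invokes \Cref{proposition segre product of schubert varieties} to show $\Sigma(G' \times G'') \subset X_{c'' \sqcup c'}(E_\ast)$, and finally applies the empty-intersection criterion of \Cref{proposition empty intersection of schubert varieties}(a) at $t_0 = k'+1$ to conclude $X_{c'' \sqcup c'}(E_\ast) \cap Z = \varnothing$. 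Your argument bypasses all of this machinery: you read off directly from the hypothesis $\mathbb{C}^n = F_{n'} \oplus \operatorname{im}(\iota'')$ that $\dim_{\mathbb{C}}(P \cap F_{n'}) \leq k'$ for $P \in \operatorname{im}(\Sigma)$, and observe that the single Schubert condition defining $Z$ at $i = k'+1$ forces $\dim_{\mathbb{C}}(P \cap F_{n'}) \geq k'+1$. What your approach buys is economy and transparency---no auxiliary flags, no appeal to \Cref{proposition segre product of schubert varieties} or \Cref{proposition empty intersection of schubert varieties}. What the paper's approach buys is consistency with the surrounding machinery: the inclusion $\operatorname{im}(\Sigma) \subset X_{c'' \sqcup c'}(E_\ast)$ and the transversality of $E_\ast$ to $F_\ast$ fit naturally with the intersection-theoretic computations elsewhere in \Cref{Normally Nonsingular Integration}.
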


\begin{proof}
We set $V' = \operatorname{im}(\iota')$ and $V'' = \operatorname{im}(\iota'')$.
The assumption $\mathbb{C}^{n} = F_{n'} \oplus V''$ implies that we can define a flag $F_{\ast}''$ on $V''$ by setting $F_{w}'' = V'' \cap F_{m'+k'+w}$ for $0 \leq w \leq m''+k''$, and a flag $F_{\ast}'$ on $V'$ by setting $F_{w}' = \pi_{2}(F_{w})$ for $0 \leq w \leq m'+k'$, where $\pi_{2} \colon \mathbb{C}^{n} = V'' \oplus V' \rightarrow V'$ denotes the projection to the second summand.
Let $E_{\ast}''$ be a flag on $V''$ that is transverse to $F_{\ast}''$, and let $E_{\ast}'$ be a flag on $V'$ that is transverse to $F_{\ast}'$.
Using the notation introduced in \Cref{Intersection of Schubert varieties}, we define the flag $E_{\ast} := E_{\ast}'' \oplus E_{\ast}'$ on $\mathbb{C}^{n} = V'' \oplus V'$.
By construction, the flags $E_{\ast}$ and $F_{\ast}$ are transverse.
By \Cref{proposition segre product of schubert varieties}, the Segre product $S = S^{k', k''}_{V', V''}$ satisfies
\begin{equation*}
S((X_{c'}(E_{\ast}') \cap X_{c'}(E_{\ast}')) \times (X_{c''}(E_{\ast}'') \cap X_{c''}(E_{\ast}''))) = X_{c' \sqcup c''}(E_{\ast}' \oplus E_{\ast}'') \cap X_{c'' \sqcup c'}(E_{\ast}'' \oplus E_{\ast}'),
\end{equation*}
that is, $S(G_{k'}(V') \times G_{k''}(V'')) \subset X_{c'' \sqcup c'}(E_{\ast})$.
Therefore, the commutative diagram (\ref{equation compatibility of Segre products}) yields $\Sigma(G' \times G'') \subset X_{c'' \sqcup c'}(E_{\ast})$.
Hence, to prove the claim, it suffices to show that
\begin{equation}\label{equation product embedded x avoids singular locus}
X_{c'' \sqcup c'}(E_{\ast}) \cap Z = \varnothing.
\end{equation}
By a result of Lakshmibai-Weyman (see \Cref{thm.lakwey}), the singular locus $Z$ of the Schubert subvariety $X_{c}(F_{\ast}) \subset G$ is given by the Schubert subvariety $Z = X_{\widetilde{c}}(F_{\ast}) \subset G$, where
$$
\widetilde{c} = [(m'-1) \times (k'+1)] \sqcup [(m''+1) \times (k''-1)]
$$
(see \Cref{figure proof partitions c}(b)).
Since $Z = \varnothing$ for $m' = 0$ or $k'' = 0$, we may assume in the following that $m', k'' \geq 1$.
Since the flags $E_{\ast}$ and $F_{\ast}$ are transverse by construction, we may apply \Cref{proposition empty intersection of schubert varieties} to obtain the claim (\ref{equation product embedded x avoids singular locus}) provided there exists $1 \leq t_{0} \leq k$ such that
$$
(c'' \sqcup c')_{t_{0}} + \widetilde{c}_{k+1-t_{0}} < m.
$$
Indeed, for $t_{0} = k'+1$, we have $(c'' \sqcup c')_{t_{0}} = (c'' \sqcup c')_{k'+1} = c''_{1} = m''$ and $\widetilde{c}_{k+1-t_{0}} = \widetilde{c}_{k''} = m'-1$, so that $(c'' \sqcup c')_{t_{0}} + \widetilde{c}_{k+1-t_{0}} = m'' + (m'-1) < m$, and the claim follows.
\end{proof}

\begin{lemma}\label{lemma homotopy of linear subspaces}
Let $0 \leq p \leq q$ be integers.
Suppose that $V \subset \mathbb{C}^{q}$ is a linear subspace and $\iota_{0}, \iota_{1} \colon \mathbb{C}^{p} \rightarrow \mathbb{C}^{q}$ are monomorphisms satisfying $\mathbb{C}^{q} = V \oplus \operatorname{im}(\iota_{0}) = V \oplus \operatorname{im}(\iota_{1})$.
Then, there exists a homotopy $\alpha$ of monomorphisms $\mathbb{C}^{p} \rightarrow \mathbb{C}^{q}$ between $\alpha_{0} = \iota_{0}$ and $\alpha_{1} = \iota_{1}$ such that $\mathbb{C}^{q} = V \oplus \operatorname{im}(\alpha_{t})$ for all $t \in [0, 1]$.
\end{lemma}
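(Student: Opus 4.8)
The plan is to identify the set of monomorphisms $\iota\colon\mathbb{C}^{p}\to\mathbb{C}^{q}$ whose image is a linear complement of $V$ with a product of a vector space and a general linear group, and thereby reduce the assertion to the path-connectedness of $GL_{p}(\mathbb{C})$. Note at the outset that the naive straight-line homotopy $t\mapsto(1-t)\iota_{0}+t\iota_{1}$ is useless here, since an intermediate map need not have image complementary to $V$ (its image could even meet $V$); the point of the argument is to isolate the ``$W_{0}$-direction'' and move it within $GL$.

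First I would fix $W_{0}:=\im(\iota_{0})$, so that $\mathbb{C}^{q}=V\oplus W_{0}$ with $\dim_{\mathbb{C}}W_{0}=p$, and let $q'\colon\mathbb{C}^{q}\to W_{0}$ be the projection onto $W_{0}$ with kernel $V$ determined by this decomposition. The key observation is that a linear map $\iota\colon\mathbb{C}^{p}\to\mathbb{C}^{q}$ is a monomorphism with $\mathbb{C}^{q}=V\oplus\im(\iota)$ if and only if $q'\circ\iota\colon\mathbb{C}^{p}\to W_{0}$ is an isomorphism. Indeed, if $q'\iota$ is an isomorphism, then $\iota$ is injective and $\im(\iota)\cap V=\im(\iota)\cap\ker(q')=0$, and since $\dim_{\mathbb{C}}\im(\iota)=p=\dim_{\mathbb{C}}W_{0}=q-\dim_{\mathbb{C}}V$ we get $\mathbb{C}^{q}=V\oplus\im(\iota)$; conversely, $q'$ restricted to any complement of $V$ has kernel $0$, hence is an isomorphism onto $W_{0}$ for dimension reasons.

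Next I would decompose $\iota_{1}=\beta_{1}+A_{1}$ with $\beta_{1}:=(\id-q')\circ\iota_{1}\in\Hom(\mathbb{C}^{p},V)$ and $A_{1}:=q'\circ\iota_{1}$, where $A_{1}\colon\mathbb{C}^{p}\to W_{0}$ is an isomorphism by the key observation, while $\iota_{0}=A_{0}$ with $A_{0}:=q'\circ\iota_{0}=\iota_{0}$ already an isomorphism onto $W_{0}$ (it has no $V$-component since $\im(\iota_{0})=W_{0}$). Since the space of isomorphisms $\mathbb{C}^{p}\to W_{0}$ is a torsor under $GL_{p}(\mathbb{C})$, which is path-connected, I can choose a continuous path $t\mapsto A_{t}$ of isomorphisms $\mathbb{C}^{p}\to W_{0}$ from $A_{0}$ to $A_{1}$, and then set
$$\alpha_{t}:=t\,\beta_{1}+A_{t}\colon\mathbb{C}^{p}\longrightarrow\mathbb{C}^{q}.$$
Then $\alpha_{0}=A_{0}=\iota_{0}$, $\alpha_{1}=\beta_{1}+A_{1}=\iota_{1}$, the assignment $(t,v)\mapsto\alpha_{t}(v)$ is continuous, and $q'\circ\alpha_{t}=A_{t}$ is an isomorphism for every $t$ because $\beta_{1}$ takes values in $V=\ker(q')$; hence, by the key observation, each $\alpha_{t}$ is a monomorphism with $\mathbb{C}^{q}=V\oplus\im(\alpha_{t})$, which is exactly what is required. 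I do not expect a real obstacle here — the argument is routine linear algebra — the only thing to get right is the reduction described above, which sidesteps the failure of the straight-line homotopy by routing the $W_{0}$-component through the path-connected group $GL_{p}(\mathbb{C})$.
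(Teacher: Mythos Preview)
Your proof is correct and rests on the same core idea as the paper's: the set of monomorphisms $\mathbb{C}^{p}\to\mathbb{C}^{q}$ whose image is complementary to $V$ is identified with $\Hom(\mathbb{C}^{p},V)\times\operatorname{Iso}(\mathbb{C}^{p},W_{0})$, and one then uses the contractibility of the first factor together with the path-connectedness of $GL_{p}(\mathbb{C})$ for the second. The paper implements this in explicit coordinates (writing complements of $V$ as row spaces of $[1_{p}\;\; A]$) and as a concatenation of two homotopies --- first moving the image linearly from $\im(\iota_{0})$ to $\im(\iota_{1})$ while keeping the $W_{0}$-component the identity, then reparametrizing within the fixed image via a path in $GL_{p}(\mathbb{C})$. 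Your version is coordinate-free and combines both motions into a single homotopy $\alpha_{t}=t\beta_{1}+A_{t}$; this is a slightly more streamlined execution of the same argument.
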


\begin{proof}
Let $e_{1}^{(r)}, \dots, e_{r}^{(r)}$ be the standard basis of $\mathbb{C}^{r}$.
By applying a linear automorphism of $\mathbb{C}^{q}$, we may assume without loss of generality that $\iota_{0}(e_{i}^{(p)}) = e_{i}^{(q)}$ for $i=1, \dots, p$, and $V$ is spanned by $e_{p+1}^{(q)}, \dots, e_{q}^{(q)}$.
Then, according to \cite[p. 193]{gh}, the linear subspaces $U \subset \mathbb{C}^{q}$ satisfying $U \oplus V = \mathbb{C}^{q}$ are in bijection with complex $p \times (q-p)$ matrices $A$, where the matrix $A$ corresponds to the linear subspace $U \subset \mathbb{C}^{q}$ spanned by the $p$ row vectors of the $p \times q$ matrix $[1_{p} \;\; A]$.
Let $A_{0}$ and $A_{1}$ be the matrices corresponding to $U_{0} := \operatorname{im}(\iota_{0})$ and $U_{1} := \operatorname{im}(\iota_{1})$, respectively.
It follows from $U_{0} = \operatorname{im}(\iota_{0}) = \langle e_{1}^{(q)}, \dots, e_{p}^{(q)}\rangle$ that $A_{0} = [0]$.
Thus,
$$
\beta \colon [0, 1] \times \mathbb{C}^{p} \rightarrow \mathbb{C}^{q}, \qquad (t, v) \mapsto \beta_{t}(v) := \beta(t, v) = v \cdot [1_{p} \;\; t \cdot A_{1}]
$$
is a homotopy of monomorphisms $\mathbb{C}^{p} \rightarrow \mathbb{C}^{q}$ satisfying $\beta_{0} = \iota_{0}$ and $\operatorname{im}(\beta_{t}) \oplus V = \mathbb{C}^{q}$ for all $t \in [0, 1]$.
Since we have $\operatorname{im}(\beta_{1}) = U_{1} = \operatorname{im}(\iota_{1})$ by construction, we can consider $g := (\beta_{1})^{-1} \circ \iota_{1} \in GL_{q}(\mathbb{C})$.
As $GL_{q}(\mathbb{C})$ is path connected, we may choose a continuous family $g_{t} \in GL_{q}(\mathbb{C})$, $t \in [0, 1]$, such that $g_{0}$ is the identity map on $\mathbb{C}^{q}$, and $g_{1} = g$.
Thus,
$$
\gamma \colon [0, 1] \times \mathbb{C}^{p} \rightarrow \mathbb{C}^{q}, \qquad (t, v) \mapsto \gamma_{t}(v) := \gamma(t, v) = \beta_{1}(g_{t}v)
$$
is a homotopy of monomorphisms $\mathbb{C}^{p} \rightarrow \mathbb{C}^{q}$ satisfying $\gamma_{0} = \beta_{1}$, $\gamma_{1} = \iota_{1}$, and $\operatorname{im}(\gamma_{t}) = \operatorname{im}(\beta_{1}) = U_{1}$ for all $t \in [0, 1]$.
In particular, we have $\operatorname{im}(\gamma_{t}) \oplus V = \mathbb{C}^{q}$ for all $t \in [0, 1]$.
Finally, the concatenation $\alpha$ of the homotopies $\beta$ and $\gamma$ has the desired properties.
\end{proof}

\begin{prop}\label{proposition homotopic monomorphisms yield homotopic segre restrictions}
In \Cref{proposition segre avoids singular locus}, the homotopy class of the map $\Sigma^{W}$ does not depend on the choice of the monomorphisms $\iota'$ and $\iota''$.
\end{prop}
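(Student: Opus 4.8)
The plan is to reduce an arbitrary admissible choice $(\iota',\iota'')$ to a single \emph{canonical} one by a short chain of homotopies, each assembled from \Cref{lemma homotopy of linear subspaces} and \Cref{lemma homotopy of monomorphisms and induced segre products}. Fix once and for all a monomorphism $\iota_F'\colon\mathbb{C}^{n'}\to\mathbb{C}^n$ with image $\operatorname{im}(\iota_F')=F_{n'}$. Calling a pair $(\iota',\iota'')$ \emph{admissible} if $\mathbb{C}^n=\operatorname{im}(\iota')\oplus\operatorname{im}(\iota'')$ and $\mathbb{C}^n=F_{n'}\oplus\operatorname{im}(\iota'')$ (the two hypotheses of \Cref{proposition segre avoids singular locus}), I would show: first, that $\Sigma^W_{\iota',\iota''}\simeq\Sigma^W_{\iota_F',\iota''}$ for every admissible $(\iota',\iota'')$; and second, that $\Sigma^W_{\iota_F',\iota_0''}\simeq\Sigma^W_{\iota_F',\iota_1''}$ for any two admissible $\iota_0'',\iota_1''$. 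Concatenating these homotopies for two given choices $(\iota_0',\iota_0'')$ and $(\iota_1',\iota_1'')$ then yields $\Sigma^W_{\iota_0',\iota_0''}\simeq\Sigma^W_{\iota_F',\iota_0''}\simeq\Sigma^W_{\iota_F',\iota_1''}\simeq\Sigma^W_{\iota_1',\iota_1''}$, which is the assertion.

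For the first homotopy I would keep $\iota''$ fixed. Since $\operatorname{im}(\iota')$ and $F_{n'}$ are both complements of $\operatorname{im}(\iota'')$ in $\mathbb{C}^n$, \Cref{lemma homotopy of linear subspaces} (applied with $V=\operatorname{im}(\iota'')$) produces a homotopy $\alpha'$ of monomorphisms $\mathbb{C}^{n'}\to\mathbb{C}^n$ from $\iota'$ to $\iota_F'$ with $\mathbb{C}^n=\operatorname{im}(\alpha_t')\oplus\operatorname{im}(\iota'')$ for all $t$. Taking $\alpha''$ to be the constant homotopy at $\iota''$, \Cref{lemma homotopy of monomorphisms and induced segre products} assembles the $\Sigma^{k',k''}_{\alpha_t',\iota''}$ into a continuous homotopy $[0,1]\times G'\times G''\to G$. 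Because the condition $\mathbb{C}^n=F_{n'}\oplus\operatorname{im}(\iota'')$ is left untouched throughout, \Cref{proposition segre avoids singular locus} guarantees that every stage — hence the entire homotopy — factors through $W$, which is open in $G$; so corestricting gives the desired homotopy of maps into $W$.

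For the second homotopy I would instead keep $\iota'=\iota_F'$ fixed. Since $\operatorname{im}(\iota_0'')$ and $\operatorname{im}(\iota_1'')$ are both complements of $F_{n'}$ in $\mathbb{C}^n$, \Cref{lemma homotopy of linear subspaces} (applied with $V=F_{n'}$) gives a homotopy $\alpha''$ of monomorphisms $\mathbb{C}^{n''}\to\mathbb{C}^n$ from $\iota_0''$ to $\iota_1''$ with $\mathbb{C}^n=F_{n'}\oplus\operatorname{im}(\alpha_t'')$ for all $t$. The key point is that, since $\operatorname{im}(\iota_F')=F_{n'}$, the constant family $\alpha'\equiv\iota_F'$ automatically satisfies $\mathbb{C}^n=\operatorname{im}(\alpha_t')\oplus\operatorname{im}(\alpha_t'')$ for every $t$, so \Cref{lemma homotopy of monomorphisms and induced segre products} applies, and moreover the $F_{n'}$-complementarity hypothesis required by \Cref{proposition segre avoids singular locus} holds at every $t$, forcing the homotopy into $W$ again. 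The one point that genuinely needs care — and the reason one cannot simply homotope $\iota'$ and $\iota''$ at the same time by a single application of \Cref{lemma homotopy of linear subspaces} — is that it is precisely the condition $\mathbb{C}^n=F_{n'}\oplus\operatorname{im}(\iota'')$, not merely $\mathbb{C}^n=\operatorname{im}(\iota')\oplus\operatorname{im}(\iota'')$, that keeps the Segre image off the singular locus $Z$; decoupling the two deformations and routing through the canonical $\iota_F'$, whose image is literally $F_{n'}$, is what makes this condition manifestly preserved at every stage.
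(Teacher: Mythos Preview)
Your proof is correct and takes essentially the same approach as the paper: both fix an auxiliary monomorphism with image $F_{n'}$ (your $\iota_F'$, the paper's $\theta'$) and concatenate three homotopies --- first moving $\iota_0'$ to $\iota_F'$ while keeping $\iota_0''$ fixed, then moving $\iota_0''$ to $\iota_1''$ while keeping $\iota_F'$ fixed, and finally moving $\iota_F'$ to $\iota_1'$ while keeping $\iota_1''$ fixed --- each built from \Cref{lemma homotopy of linear subspaces} and \Cref{lemma homotopy of monomorphisms and induced segre products}. Your presentation packages the first and third steps as two instances of a single reduction statement, but the underlying construction is identical to the paper's.
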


\begin{proof}
We consider two pairs of monomorphisms $\iota'_{0}, \iota'_{1} \colon \mathbb{C}^{n'} \rightarrow \mathbb{C}^{n}$ and $\iota''_{0}, \iota''_{1} \colon \mathbb{C}^{n''} \rightarrow \mathbb{C}^{n}$ satisfying $\mathbb{C}^{n} = \operatorname{im}(\iota_{i}') \oplus \operatorname{im}(\iota_{i}'')$ and $\mathbb{C}^{n} = F_{n'} \oplus \operatorname{im}(\iota_{i}'')$ for $i = 0, 1$.
In view of \Cref{lemma homotopy of monomorphisms and induced segre products}, it suffices to construct a homotopy $\alpha'$ of monomorphisms $\mathbb{C}^{n'} \rightarrow \mathbb{C}^{n}$ between $\alpha'_{0} = \iota'_{0}$ and $\alpha'_{1} = \iota'_{1}$, and a homotopy $\alpha''$ of monomorphisms $\mathbb{C}^{n''} \rightarrow \mathbb{C}^{n}$ between $\alpha''_{0} = \iota''_{0}$ and $\alpha''_{1} = \iota''_{1}$ such that $\mathbb{C}^{n} = \operatorname{im}(\alpha_{t}') \oplus \operatorname{im}(\alpha_{t}'')$ and $\mathbb{C}^{n} = F_{n'} \oplus \operatorname{im}(\alpha_{t}'')$ for all $t \in [0, 1]$.

Let $\theta' \colon \mathbb{C}^{n'} \rightarrow \mathbb{C}^{n}$ be a monomorphism such that $\operatorname{im}(\theta') = F_{n'}$.

First, we use \Cref{lemma homotopy of linear subspaces} to find a homotopy $\beta'$ of monomorphisms $\mathbb{C}^{n'} \rightarrow \mathbb{C}^{n}$ between $\beta'_{0} = \iota'_{0}$ and $\beta'_{1} = \theta'$ such that $\mathbb{C}^{n} = \operatorname{im}(\beta_{t}') \oplus \operatorname{im}(\iota_{0}'')$ for all $t \in [0, 1]$.
Let $\beta''$ be the constant homotopy of monomorphisms $\iota_{0}'' \colon \mathbb{C}^{n''} \rightarrow \mathbb{C}^{n}$.
Thus, we have $\mathbb{C}^{n} = \operatorname{im}(\beta_{t}') \oplus \operatorname{im}(\beta_{t}'')$ and $\mathbb{C}^{n} = F_{n'} \oplus \operatorname{im}(\beta_{t}'')$ for all $t \in [0, 1]$.

Second, we use \Cref{lemma homotopy of linear subspaces} to find a homotopy $\gamma''$ of monomorphisms $\mathbb{C}^{n''} \rightarrow \mathbb{C}^{n}$ between $\gamma''_{0} = \iota''_{0}$ and $\gamma''_{1} = \iota''_{1}$ such that $\mathbb{C}^{n} = F_{n'} \oplus \operatorname{im}(\gamma_{t}'')$ for all $t \in [0, 1]$.
Let $\gamma'$ be the constant homotopy of monomorphisms $\theta' \colon \mathbb{C}^{n'} \rightarrow \mathbb{C}^{n}$.
Thus, we have $\mathbb{C}^{n} = \operatorname{im}(\gamma_{t}') \oplus \operatorname{im}(\gamma_{t}'')$ and $\mathbb{C}^{n} = F_{n'} \oplus \operatorname{im}(\gamma_{t}'')$ for all $t \in [0, 1]$.

Third, we use \Cref{lemma homotopy of linear subspaces} to find a homotopy $\delta'$ of monomorphisms $\mathbb{C}^{n'} \rightarrow \mathbb{C}^{n}$ between $\delta'_{0} = \theta'$ and $\delta'_{1} = \iota'_{1}$ such that $\mathbb{C}^{n} = \operatorname{im}(\delta_{t}') \oplus \operatorname{im}(\iota_{1}'')$ for all $t \in [0, 1]$.
Let $\delta''$ be the constant homotopy of monomorphisms $\iota_{1}'' \colon \mathbb{C}^{n''} \rightarrow \mathbb{C}^{n}$.
Thus, we have $\mathbb{C}^{n} = \operatorname{im}(\delta_{t}') \oplus \operatorname{im}(\delta_{t}'')$ and $\mathbb{C}^{n} = F_{n'} \oplus \operatorname{im}(\delta_{t}'')$ for all $t \in [0, 1]$.

Finally, the desired homotopies $\alpha'$ and $\alpha''$ are the concatenations of the homotopies $\beta'$, $\gamma'$, $\delta'$, and $\beta''$, $\gamma''$, $\delta''$, respectively.
\end{proof}

\begin{prop}\label{proposition invariance of integral coefficient for recursive formula}
Let $F_{\ast}$ be a flag on $\mathbb{C}^{n}$.
Let $Z$ denote the singular set of the Schubert subvariety $X_{c}(F_{\ast}) \subset G$.
Then, we define the smooth irreducible quasiprojective complex algebraic variety $W = G \setminus Z$, and the smooth irreducible closed subvariety $M = X_{c}(F_{\ast}) \setminus Z \subset W$ given by the set of nonsingular points of $X_{c}(F_{\ast})$.
Let $f^{!} \colon H_{\ast}(W) \rightarrow H_{\ast-2k'm''}(M)$ denote the Gysin map associated to the inclusion $f \colon M \hookrightarrow W$, where we note that $M \subset W$ is a smooth submanifold of (real) codimension $2k'm''$ that is closed as a subset.
Moreover, let $\Sigma_{\ast}^{W} \colon H_{\ast}(G' \times G'') \rightarrow H_{\ast}(W)$ be the map induced on homology by the map $\Sigma^{W}$ introduced in \Cref{proposition segre avoids singular locus}, where we note that according to \Cref{proposition homotopic monomorphisms yield homotopic segre restrictions}, $\Sigma_{\ast}^{W}$ does not depend on the choice of the monomorphisms $\iota'$ and $\iota''$ employed in the Segre product $\Sigma = \Sigma_{\iota', \iota''}^{k', k''} \colon G' \times G'' \rightarrow G$ defined in (\ref{equation segre product in terms of monomorphisms}).
Let $c \ell = (c \ell^{\ast}, c \ell_{\ast})$ be a Gysin coherent characteristic class with respect to $\mathcal{X}$.
Then, for $b' \in \mathcal{P}(m', k')$ and $b'' \in \mathcal{P}(m'', k'')$, the expression
\begin{equation}\label{equation integral constant in recursive formula}
\langle c \ell^{\ast} \rangle(b', b'') := \langle c\ell^{\ast}(f)^{-1}, f^{!} \Sigma_{\ast}^{W}([X_{b'}]_{G'} \times [X_{b''}]_{G''})\rangle \in \mathbb{Q}
\end{equation}
is independent of the choice of the flag $F_{\ast}$.
\end{prop}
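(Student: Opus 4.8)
\emph{Proof proposal.}
The plan is to reduce the independence of the flag to the isomorphism-invariance built into \Cref{definition gysin coherent characteristic classes}, using that any two complete flags of $\mathbb{C}^{n}$ are related by an element of $GL_{n}(\mathbb{C})$. Given flags $F_{\ast}^{(0)}$ and $F_{\ast}^{(1)}$ on $\mathbb{C}^{n}$, choose $g \in GL_{n}(\mathbb{C})$ with $g \cdot F_{\ast}^{(0)} = F_{\ast}^{(1)}$ and let $\Phi \colon G \to G$ be the induced automorphism of $G = G_{k}(\mathbb{C}^{n})$. Since $\Phi(X_{c}(F_{\ast}^{(0)})) = X_{c}(g \cdot F_{\ast}^{(0)}) = X_{c}(F_{\ast}^{(1)})$ and an isomorphism of varieties carries singular loci to singular loci, $\Phi$ maps $Z^{(0)}$ onto $Z^{(1)}$, where $Z^{(j)} = \Sing X_{c}(F_{\ast}^{(j)})$. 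Hence $\Phi$ restricts to isomorphisms $\psi \colon W^{(0)} \xrightarrow{\cong} W^{(1)}$ and $\phi \colon M^{(0)} \xrightarrow{\cong} M^{(1)}$ of the smooth quasiprojective varieties $W^{(j)} = G \setminus Z^{(j)}$ and $M^{(j)} = X_{c}(F_{\ast}^{(j)}) \setminus Z^{(j)}$, intertwining the inclusions $f^{(j)} \colon M^{(j)} \hookrightarrow W^{(j)}$, i.e. $\psi \circ f^{(0)} = f^{(1)} \circ \phi$.

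Next I would assemble three compatibilities. (i) By the isomorphism-invariance property of $c\ell^{\ast}$ in \Cref{definition gysin coherent characteristic classes}, $\phi^{\ast} c\ell^{\ast}(f^{(1)}) = c\ell^{\ast}(f^{(0)})$; as $\phi^{\ast}$ is a ring isomorphism and both classes have unit degree-zero part, also $\phi^{\ast}\big(c\ell^{\ast}(f^{(1)})^{-1}\big) = c\ell^{\ast}(f^{(0)})^{-1}$. (ii) The square with horizontal arrows $\phi,\psi$ and vertical arrows $f^{(0)},f^{(1)}$ is cartesian, the $f^{(j)}$ are oriented normally nonsingular inclusions of the respective smooth loci with their holomorphic (hence canonically oriented) normal bundles, and the differential of $\psi$ identifies $\nu_{f^{(0)}}$ with $\phi^{\ast}\nu_{f^{(1)}}$ as oriented bundles; thus \Cref{proposition gysin compatible with inclusions} gives $\phi_{\ast} f^{(0)!} = f^{(1)!} \psi_{\ast}$. (iii) If $\iota',\iota''$ are monomorphisms used to form $\Sigma^{W^{(0)}}$ for $F_{\ast}^{(0)}$ (so $\mathbb{C}^{n} = F_{n'}^{(0)} \oplus \im(\iota'') = \im(\iota') \oplus \im(\iota'')$), then $g\iota',g\iota''$ satisfy the analogous conditions for $F_{\ast}^{(1)}$, and $\Sigma^{k',k''}_{g\iota',g\iota''} = \Phi \circ \Sigma^{k',k''}_{\iota',\iota''}$ because $g(A \oplus B) = gA \oplus gB$ for subspaces in direct sum; restricting to the smooth complement and invoking \Cref{proposition homotopic monomorphisms yield homotopic segre restrictions} (which lets $\Sigma_{\ast}^{W^{(1)}}$ be computed with these transported monomorphisms) yields $\Sigma_{\ast}^{W^{(1)}} = \psi_{\ast} \circ \Sigma_{\ast}^{W^{(0)}}$.

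Finally I would combine these. Writing $u = [X_{b'}]_{G'} \times [X_{b''}]_{G''}$, compatibilities (ii) and (iii) give $f^{(1)!}\Sigma_{\ast}^{W^{(1)}}(u) = f^{(1)!}\psi_{\ast}\Sigma_{\ast}^{W^{(0)}}(u) = \phi_{\ast} f^{(0)!}\Sigma_{\ast}^{W^{(0)}}(u)$. Using (i) and the naturality of the Kronecker product $\langle \phi^{\ast}\xi, x\rangle = \langle \xi, \phi_{\ast}x\rangle$ (which follows from the projection formula together with $\varepsilon_{\ast}\phi_{\ast} = \varepsilon_{\ast}$), one obtains
\[
\begin{aligned}
\big\langle c\ell^{\ast}(f^{(1)})^{-1},\, f^{(1)!}\Sigma_{\ast}^{W^{(1)}}(u)\big\rangle
&= \big\langle c\ell^{\ast}(f^{(1)})^{-1},\, \phi_{\ast} f^{(0)!}\Sigma_{\ast}^{W^{(0)}}(u)\big\rangle \\
&= \big\langle \phi^{\ast}\big(c\ell^{\ast}(f^{(1)})^{-1}\big),\, f^{(0)!}\Sigma_{\ast}^{W^{(0)}}(u)\big\rangle \\
&= \big\langle c\ell^{\ast}(f^{(0)})^{-1},\, f^{(0)!}\Sigma_{\ast}^{W^{(0)}}(u)\big\rangle,
\end{aligned}
\]
which is exactly the asserted independence of $F_{\ast}$. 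I expect the only real obstacle to be compatibility (iii): one has to be careful that the bookkeeping of monomorphisms is transported correctly along $g$ and that \Cref{proposition homotopic monomorphisms yield homotopic segre restrictions} is applied to the flag $F_{\ast}^{(1)}$ so that $\Sigma_{\ast}^{W^{(1)}}$ may legitimately be evaluated via $g\iota',g\iota''$; once this is set up, (i) and (ii) and the displayed computation are a formal diagram chase.
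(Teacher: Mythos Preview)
Your proposal is correct and follows essentially the same strategy as the paper: pick $g \in GL_n(\mathbb{C})$ carrying one flag to the other, use isomorphism invariance of $c\ell^{\ast}$ (axiom~(\ref{axiom isomorphism})), base change for Gysin maps (\Cref{proposition gysin compatible with inclusions}), and the choice-independence of $\Sigma_{\ast}^{W}$ (\Cref{proposition homotopic monomorphisms yield homotopic segre restrictions}) to match the Kronecker pairings. The only cosmetic difference is that the paper fixes a single pair $\iota',\iota''$ compatible with \emph{both} flags and then compares with the transported pair $g\iota',g\iota''$ at the end, whereas you start with monomorphisms for $F_{\ast}^{(0)}$ and transport them directly; both routes invoke \Cref{proposition homotopic monomorphisms yield homotopic segre restrictions} at the same point and are logically equivalent.
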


\begin{proof}
Let $\widetilde{F}_{\ast}$ be another flag on $\mathbb{C}^{n}$.
Let $\widetilde{Z}$ denote the singular set of the Schubert subvariety $X_{c}(\widetilde{F}_{\ast}) \subset G$.
Then, we define the smooth irreducible quasiprojective complex algebraic variety $\widetilde{W} = G \setminus \widetilde{Z}$, and the smooth irreducible closed subvariety $\widetilde{M} = X_{c}(\widetilde{F}_{\ast}) \setminus \widetilde{Z} \subset \widetilde{W}$ given by the set of nonsingular points of $X_{c}(\widetilde{F}_{\ast})$.
Let $\widetilde{f}^{!} \colon H_{\ast}(\widetilde{W}) \rightarrow H_{\ast-2k'm''}(\widetilde{M})$ denote the Gysin map associated to the inclusion $\widetilde{f} \colon \widetilde{M} \hookrightarrow \widetilde{W}$.
Fix monomorphisms $\iota' \colon \mathbb{C}^{n'} \rightarrow \mathbb{C}^{n}$ and $\iota'' \colon \mathbb{C}^{n''} \rightarrow \mathbb{C}^{n}$ such that $\mathbb{C}^{n} = \operatorname{im}(\iota') \oplus \operatorname{im}(\iota'')$, $\mathbb{C}^{n} = F_{n'} \oplus \operatorname{im}(\iota'')$, and $\mathbb{C}^{n} = \widetilde{F}_{n'} \oplus \operatorname{im}(\iota'')$.
Let $\Sigma = \Sigma_{\iota', \iota''}^{k', k''} \colon G' \times G'' \rightarrow G$ be the associated Segre product defined in (\ref{equation segre product in terms of monomorphisms}).
Since $\mathbb{C}^{n} = F_{n'} \oplus \operatorname{im}(\iota'')$ and $\mathbb{C}^{n} = \widetilde{F}_{n'} \oplus \operatorname{im}(\iota'')$, the map $\Sigma$ restricts by \Cref{proposition segre avoids singular locus} to maps $\Sigma^{W} \colon G' \times G'' \rightarrow W$ and $\Sigma^{\widetilde{W}} \colon G' \times G'' \rightarrow \widetilde{W}$.
We fix $g \in GL_{n}(\mathbb{C})$ such that $\widetilde{F}_{\ast} = g \cdot F_{\ast}$.
Then, the isomorphism $\Gamma \colon G \cong G$, $P \mapsto g \cdot P$, induced by $g$ restricts to isomorphisms $\Gamma_{M} := \Gamma| \colon M \cong \widetilde{M}$ and $\Gamma_{W} := \Gamma| \colon W \cong \widetilde{W}$ such that the diagram
\[ \xymatrix{
M \ar[r]_{f} \ar[d]_{\Gamma_{M}}^{\cong} & W \ar[d]_{\Gamma_{W}}^{\cong} \\
\widetilde{M} \ar[r]_{\widetilde{f}} & \widetilde{W}
} \]
commutes.
By axiom (\ref{axiom isomorphism}) for the ambient characteristic class $c \ell$, we have $\Gamma_{M}^{\ast} c\ell^{\ast}(\widetilde{f})^{-1} = c\ell^{\ast}(f)^{-1}$.
Moreover, we have $\Gamma_{M \ast}f^{!} = \widetilde{f}^{!} \Gamma_{W \ast}$ by \Cref{proposition gysin compatible with inclusions}.
We define the monomorphisms $\widetilde{\iota}' := g \cdot \iota' \colon \mathbb{C}^{n'} \rightarrow \mathbb{C}^{n}$ and $\widetilde{\iota}'' := g \cdot \iota'' \colon \mathbb{C}^{n''} \rightarrow \mathbb{C}^{n}$.
Let $\widetilde{\Sigma} := \Sigma_{\widetilde{\iota}', \widetilde{\iota}''}^{k', k''} \colon G' \times G'' \rightarrow G$ be the Segre product defined in (\ref{equation segre product in terms of monomorphisms}) associated to the monomorphisms $\widetilde{\iota}'$ and $\widetilde{\iota}''$, where we note that $\mathbb{C}^{n} = \operatorname{im}(\widetilde{\iota}') \oplus \operatorname{im}(\widetilde{\iota}'')$ follows from $\mathbb{C}^{n} = \operatorname{im}(\iota') \oplus \operatorname{im}(\iota'')$.
By construction, we have $\widetilde{\Sigma} = \Gamma \circ \Sigma$.
Since $\mathbb{C}^{n} = F_{n'} \oplus \operatorname{im}(\iota'')$ implies $\mathbb{C}^{n} = \widetilde{F}_{n'} \oplus \operatorname{im}(\widetilde{\iota}'')$, the map $\widetilde{\Sigma}$ restricts by \Cref{proposition segre avoids singular locus} to a map $\widetilde{\Sigma}^{\widetilde{W}} \colon G' \times G'' \rightarrow \widetilde{W}$.
Hence, it follows that $\widetilde{\Sigma}^{\widetilde{W}} = \Gamma_{W} \circ \Sigma^{W}$.

Altogether, the expression (\ref{equation integral constant in recursive formula}) does not depend on the choice of the flag $F_{\ast}$ because
\begin{align*}
\langle c\ell^{\ast}(f)^{-1}, f^{!} \Sigma_{\ast}^{W}([X_{b'}]_{G'} \times [X_{b''}]_{G''})\rangle &= \langle \Gamma_{M}^{\ast} c\ell^{\ast}(\widetilde{f})^{-1}, f^{!} \Sigma_{\ast}^{W}([X_{b'}]_{G'} \times [X_{b''}]_{G''})\rangle \\
&= \langle c\ell^{\ast}(\widetilde{f})^{-1}, \Gamma_{M \ast}f^{!} \Sigma^{W}_{\ast}([X_{b'}]_{G'} \times [X_{b''}]_{G''})\rangle \\
&= \langle c\ell^{\ast}(\widetilde{f})^{-1}, \widetilde{f}^{!} \Gamma_{W \ast} \Sigma^{W}_{\ast}([X_{b'}]_{G'} \times [X_{b''}]_{G''})\rangle \\
&= \langle c\ell^{\ast}(\widetilde{f})^{-1}, \widetilde{f}^{!} \widetilde{\Sigma}^{\widetilde{W}}_{\ast}([X_{b'}]_{G'} \times [X_{b''}]_{G''})\rangle \\
&= \langle c\ell^{\ast}(\widetilde{f})^{-1}, \widetilde{f}^{!} \Sigma^{\widetilde{W}}_{\ast}([X_{b'}]_{G'} \times [X_{b''}]_{G''})\rangle
\end{align*}
where the last equality holds because $\widetilde{\Sigma}^{\widetilde{W}}_{\ast} = \Sigma^{\widetilde{W}}_{\ast}$ by \Cref{proposition homotopic monomorphisms yield homotopic segre restrictions}.

This completes the proof of \Cref{proposition invariance of integral coefficient for recursive formula}.
\end{proof}

The following proposition enables us to pick out distinguished partitions in intersection products by considering triple intersections.

\begin{prop}\label{theorem triple intersection}
We assume that $k' > 0$.
Suppose that $a' \in \mathcal{P}(m', k')$ is given such that $a_{1}' = m''$ and $\operatorname{max} \{t \in \mathbb{Z} | 1 \leq t \leq k, \; a_{t}' = a_{1}'\} = k' - k''$.
Let $a'' \in \mathcal{P}(m'', k'')$ be defined by $a_{t}'' = m'' - a_{k'+1-t}'$ for $1 \leq t \leq k''$ (see \Cref{figure complementary partition}).
We also set $a = a'' \sqcup c' \in \mathcal{P}(m, k)$ (see \Cref{figure proof partitions ab}(a)).
Then, given $b' \in \mathcal{P}(m', k')$ with $|b'| = |a'|$, the element $b = b' \sqcup c'' \in \mathcal{P}(m, k)$ (see \Cref{figure proof partitions ab}(b)) satisfies
\begin{align*}
[X_{a}]_{G} \cdot [X_{b}]_{G} \cdot [X_{c}]_{G} = \delta_{a'b'} \cdot [\operatorname{pt}]_{G},
\end{align*}
where $\delta$ is the Kronecker delta given by $\delta_{a'b'} = 1$ for $a' = b'$, and $\delta_{a'b'} = 0$ else.
\end{prop}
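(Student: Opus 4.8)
The plan is to reduce the triple product to a single Kronecker pairing on the product Grassmannian $G' \times G''$ by means of the Segre calculus of \Cref{Intersection of Schubert varieties}, and then to evaluate that pairing through Poincaré duality in $G'$ and $G''$ separately. Since $|a| + |b| + |c| = 2\dim_{\mathbb{C}} G$, the product $[X_a]_G \cdot [X_b]_G \cdot [X_c]_G$ lies in $H_0(G; \mathbb{Q}) = \mathbb{Q}\cdot[\operatorname{pt}]_G$, so it suffices to compute its coefficient. Because $[X_{c'}]_{G'}$ and $[X_{c''}]_{G''}$ are the fundamental, hence unit, classes of the full Grassmannians $G' = G_{k'}(\mathbb{C}^{n'})$ and $G'' = G_{k''}(\mathbb{C}^{n''})$, the intersection products on the right-hand side of \Cref{corollary twisting of partitions} collapse and one obtains
\[ [X_a]_G \cdot [X_b]_G \;=\; S_\ast\big([X_{b'}]_{G'} \times [X_{a''}]_{G''}\big), \]
where $S \colon G' \times G'' \hookrightarrow G$ is the Segre embedding of \Cref{proposition segre product of schubert varieties}. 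Applying the projection formula to the closed embedding $S$ and using $\varepsilon_\ast \circ S_\ast = \varepsilon_\ast$, this becomes
\[ [X_a]_G \cdot [X_b]_G \cdot [X_c]_G \;=\; \big\langle S^\ast \sigma_c,\ [X_{b'}]_{G'} \times [X_{a''}]_{G''} \big\rangle \cdot [\operatorname{pt}]_G, \qquad \sigma_c := PD_G([X_c]_G). \]

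Next I would compute $S^\ast \sigma_c$. The complement of $c = c' \sqcup c''$ in the $m \times k$ rectangle is the rectangle consisting of $k'$ rows of length $m''$, so $\sigma_c$ is a rectangular Schubert class: the Schur polynomial of that rectangular shape evaluated on the Chern roots of the dual tautological subbundle of $G$. Since $S$ sends the tautological subbundle of $G$ to the external direct sum of the tautological subbundles of $G'$ and $G''$, the branching rule for a rectangular Schur function under a direct sum of two alphabets yields
\[ S^\ast \sigma_c \;=\; \sum_{\mu} \sigma'_{\mu} \otimes \sigma''_{\mu^\ast}, \]
where $\mu$ runs over partitions with at most $k'$ rows each of length at most $m''$, the rotated complement is $\mu^\ast_i := m'' - \mu_{k'+1-i}$, and $\sigma'_\mu \in H^\ast(G')$, $\sigma''_{\mu^\ast} \in H^\ast(G'')$ are the corresponding Schubert classes, taken to be zero whenever the partition fails to fit into the respective Grassmannian. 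The requirement that $\mu$ fit into $G'$ is automatic, since $a' \in \mathcal{P}(m', k')$ with $a'_1 = m''$ forces $m'' \le m'$; and the requirement that $\mu^\ast$ fit into $G''$ is precisely $\mu_1 = \dots = \mu_{k'-k''} = m''$.

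Finally I would evaluate the pairing. By Poincaré duality in $G'$ and in $G''$ — concretely, by the complementary-degree case of the Schubert intersection product recorded in \Cref{corollary empty intersection of schubert varieties} — one has $\langle \sigma'_{\mu}, [X_{b'}]_{G'} \rangle = \delta_{\mu b'}$ and $\langle \sigma''_{\mu^\ast}, [X_{a''}]_{G''} \rangle = \delta_{\mu^\ast a''}$, so the pairing equals $\sum_{\mu} \delta_{\mu b'}\,\delta_{\mu^\ast a''}$. This sum is $1$ exactly when $b'$ itself lies in the summation range and $(b')^\ast = a''$, and is $0$ otherwise; a short case analysis using the hypothesis $a'_1 = \dots = a'_{k'-k''} = m''$, $a'_{k'-k''+1} < m''$, together with $a''_t = m'' - a'_{k'+1-t}$, shows that this holds precisely when $b' = a'$. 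Indeed, if $b'$ fails $b'_1 = \dots = b'_{k'-k''} = m''$ it is outside the range and also distinct from $a'$, whereas if it satisfies this, then $(b')^\ast = a''$ translates into $b'_j = a'_j$ for $k' - k'' < j \le k'$, which together with the first $k'-k''$ entries forces $b' = a'$. Hence the pairing equals $\delta_{a'b'}$, which is the claimed identity.

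The step I expect to be the main obstacle is the computation of $S^\ast \sigma_c$: whereas the reduction to the pairing and its evaluation use only the Segre-product machinery and the Poincaré duality already set up in the paper, identifying the pullback of a rectangular Schubert class under the Segre embedding genuinely invokes the branching rule for rectangular Schur functions — equivalently, a direct description of how a general translate of $X_c$ meets the Segre-embedded $G' \times G''$. This could alternatively be obtained by degenerating the flag that defines $X_c$, but that approach is more laborious. The combinatorial bookkeeping in the last step is routine but must be carried out with care, as two distinct ambient rectangles (widths $m'$ and $m''$, each with $k'$ rows) intervene.
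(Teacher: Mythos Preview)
Your argument is correct and complete. It differs genuinely from the paper's proof, so a brief comparison is in order.

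The paper never computes $S^\ast\sigma_c$. Instead, it manipulates the triple product entirely on the level of homology classes of $G$, using only the Intersection Box Extension Principle (\Cref{corollary twisting rule}) and the Poincaré-pairing identities of \Cref{corollary empty intersection of schubert varieties}. Concretely, it first swaps $a''$ across the product using \Cref{corollary twisting rule} to obtain $[X_{b' \sqcup c''}]\cdot[X_{c' \sqcup a''}]\cdot[X_{c''\sqcup c'}]$, then shows by a direct degree count (summing the inequalities $(b' \sqcup c'')_{k+1-t} + (c' \sqcup a'')_t \ge m$ over $t$) that the double product $[X_{b' \sqcup c''}]\cdot[X_{c' \sqcup a''}]$ vanishes unless $b'=a'$, and finally reduces the case $b'=a'$ to $[\operatorname{pt}]_G$ by two further applications of \Cref{corollary twisting rule} together with \Cref{corollary empty intersection of schubert varieties}. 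Everything stays inside the paper's own toolkit.

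Your route trades this chain of rewritings for a single external ingredient, the Littlewood--Richardson rule for a rectangular shape (equivalently the branching formula $s_{(m'')^{k'}}(x',x'')=\sum_{\mu\subset (m'')^{k'}} s_\mu(x')\,s_{\mu^\ast}(x'')$). This is standard, and once invoked, the evaluation of the Kronecker pairing via \Cref{corollary empty intersection of schubert varieties} is immediate; your final case analysis matching $(b')^\ast=a''$ to $b'=a'$ under the hypotheses on $a'$ is exactly right. The trade-off is clear: the paper's proof is self-contained but somewhat intricate, while yours is shorter and more conceptual but imports a combinatorial identity that the paper has not developed. You correctly flagged this step as the main obstacle; note that a fully self-contained version of your approach would in effect have to reprove the rectangular LR rule, at which point the two arguments become comparable in length.
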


\begin{figure}[htbp]
\centering
\fbox{\begin{tikzpicture}
\draw (0, 0) node {\includegraphics[width=1.0\textwidth]{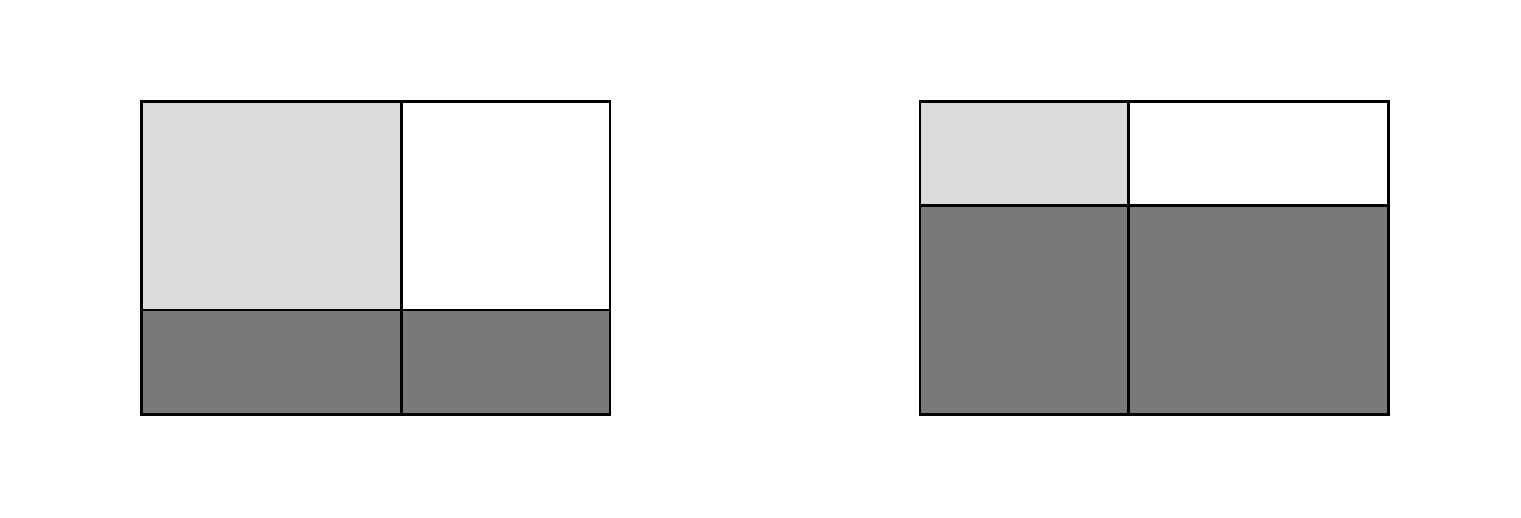}};
\draw (-5.8, 1.8) node {(a)};
\draw (-4.2, 0.5) node {$D_{a''}$};
\draw (-3.5, -1.9) node {$D_{a}$};
\draw (-5.4, 0.5) node {$k'' \begin{cases}
\quad \\
\quad \\
\quad
\end{cases}$};
\draw (-5.4, -0.8) node {$k'
\begin{cases}
\quad
\end{cases}$};
\draw (-4.2, 1.8) node {$\overbrace{\quad \quad \quad \quad \quad \quad}^{m''}$};
\draw (-2.2, 1.8) node {$\overbrace{\quad \quad \quad \quad}^{m'}$};

\draw (0.9, 1.8) node {(b)};
\draw (2.4, 0.9) node {$D_{b'}$};
\draw (3.5, -1.9) node {$D_{b}$};
\draw (1.3, -0.5) node {$k'' \begin{cases}
\quad \\
\quad \\
\quad
\end{cases}$};
\draw (1.3, 0.9) node {$k'
\begin{cases}
\quad
\end{cases}$};
\draw (2.3, 1.8) node {$\overbrace{\quad \quad \quad \quad}^{m'}$};
\draw (4.25, 1.8) node {$\overbrace{\quad \quad \quad \quad \quad \quad}^{m''}$};
\end{tikzpicture}}
\caption{(a) Definition of the Young diagram $D_{a}$ in terms of the Young diagram $D_{a''}$.
(b) Definition of the Young diagram $D_{b}$ in terms of the Young diagram $D_{b'}$.}
\label{figure proof partitions ab}
\end{figure}

\begin{proof}
Let $a^{\ast} \in \mathcal{P}(m'', k'')$ be given by $a_{t}^{\ast} = a_{k-k'+t}'$ for $1 \leq t \leq k''$.
Then, by construction,
\begin{equation}\label{equation complementary partitions}
a_{t}'' = m'' - a_{k'+1-t}^{\ast}, \quad 1 \leq t \leq k''.
\end{equation}

Writing $[-]_{G} = [-]$, we have
\begin{align*}
[X_{a}] \cdot [X_{b}] \cdot [X_{c}] &= [X_{a'' \sqcup [m' \times k']}] \cdot [X_{b' \sqcup [m'' \times k'']}] \cdot [X_{[m' \times k'] \sqcup [m'' \times k'']}] \\
&\stackrel{(\ref{step 1})}{=} [X_{[m'' \times k''] \sqcup [m' \times k']}] \cdot [X_{b' \sqcup [m'' \times k'']}] \cdot [X_{[m' \times k'] \sqcup a''}] \\
&\stackrel{(\ref{step 2})}{=} \delta_{a'b'} \cdot [X_{a' \sqcup [m'' \times k'']}] \cdot [X_{[m'' \times k''] \sqcup [m' \times k']}] \cdot [X_{[m' \times k'] \sqcup a''}] \\
&\stackrel{(\ref{step 3})}{=} \delta_{a'b'} \cdot [X_{a^{\ast} \sqcup [m' \times k'']_{m', k'}}] \cdot [X_{[m' \times k'] \sqcup a''}] \cdot [X_{[m'' \times k''] \sqcup [m' \times k']}] \\
&\stackrel{(\ref{step 4})}{=} \delta_{a'b'} \cdot [X_{[m' \times k'']_{m, k}}] \cdot [X_{[m'' \times k''] \sqcup [m' \times k']}] \\
&\stackrel{(\ref{step 5})}{=} \delta_{a'b'} \cdot [\operatorname{pt}],
\end{align*}
where the individual steps are explained in the following.

Since $[X_{a''}]_{G''} \cdot [X_{[m'' \times k'']}]_{G''} = [X_{[m'' \times k'']}]_{G''} \cdot [X_{a''}]_{G''}$, the Intersection Box Extension Principle (\Cref{corollary twisting rule}) implies that
\begin{align}\label{step 1}
[X_{a'' \sqcup [m' \times k']}] \cdot [X_{[m' \times k'] \sqcup [m'' \times k'']}] = [X_{[m'' \times k''] \sqcup [m' \times k']}] \cdot [X_{[m' \times k'] \sqcup a''}].
\end{align}

Next, let us show that
\begin{align}\label{step 2}
[X_{b' \sqcup [m'' \times k'']}] \cdot [X_{[m' \times k'] \sqcup a''}] = \delta_{a'b'} \cdot [X_{a' \sqcup [m'' \times k'']}] \cdot [X_{[m' \times k'] \sqcup a''}].
\end{align}
By \Cref{corollary empty intersection of schubert varieties}, it suffices to show the implication
\begin{align}\label{step 2 implication}
(b' \sqcup [m'' \times k''])_{k+1-t} + ([m' \times k'] \sqcup a'')_{t} \geq m \quad \forall \; 1 \leq t \leq k' \quad \Rightarrow \quad a' = b'.
\end{align}
For this purpose, note that
$$
(b' \sqcup [m'' \times k''])_{k+1-t} + ([m' \times k'] \sqcup a'')_{t} = b_{k'+1-t}' + m' + a''_{t}, \qquad 1 \leq t \leq k',
$$
where we wrote $a_{t}'' := 0$ for $t > k''$.
Hence, if the assumption in (\ref{step 2 implication}) holds, then summation over $1 \leq t \leq k'$ yields
$$
k'm \leq |b'| + k'm' + |a''| = k'm,
$$
where we have used that $|b'| + |a''| = |a'| + |a''| = k'm''$ by construction of $a''$.
Consequently, we obtain $b_{k'+1-t}' + m' + a''_{t} = m$ for all $1 \leq t \leq k'$.
Hence, writing $a_{k''+1-t}^{\ast} := m'' \; (= a_{k'+1-t}')$ for $t > k''$, we get
$$
b_{k'+1-t}' = m'' - a_{t}'' = a_{k''+1-t}^{\ast} = a_{k'+1-t}', \qquad 1 \leq t \leq k',
$$
and (\ref{step 2}) follows.

By construction of $a^{\ast}$, we have
\begin{align}\label{step 3}
a' \sqcup [m'' \times k''] = a^{\ast} \sqcup [m' \times k'']_{m', k'}.
\end{align}

Next, let us show that
\begin{align}\label{step 4}
[X_{a^{\ast} \sqcup [m' \times k'']_{m', k'}}] \cdot [X_{[m' \times k'] \sqcup a''}] = [X_{[m' \times k'']_{m, k}}].
\end{align}
In fact, since $[X_{a^{\ast}}]_{G''} \cdot [X_{a''}]_{G''} = [X_{[m'' \times k'']}]_{G''} \cdot [X_{[0 \times 0]_{m'', k''}}]_{G''} \; (= 0)$ by (\ref{equation complementary partitions}) and \Cref{corollary empty intersection of schubert varieties}(a), the Intersection Box Extension Principle (\Cref{corollary twisting rule}) implies that
\begin{align*}
[X_{a^{\ast} \sqcup [m' \times k'']_{m', k'}}] \cdot [X_{[m' \times k'] \sqcup a''}] &\stackrel{\ref{corollary twisting rule}}{=} [X_{[m'' \times k''] \sqcup [m' \times k'']_{m', k'}}] \cdot [X_{[m' \times k'] \sqcup [0 \times 0]_{m'', k''}}] \\
&= [X_{[m'' \times 0] \sqcup [m' \times k'']_{m', k}}] \cdot [X_{[m' \times k] \sqcup [m'' \times 0]}] \\
&\stackrel{\ref{corollary twisting rule}}{=} [X_{[m'' \times 0] \sqcup [m' \times k]}] \cdot [X_{[m' \times k'']_{m', k} \sqcup [m'' \times 0]}] \\
&= [X_{[m \times k]}] \cdot [X_{[m' \times k'']_{m, k}}] \\
&= [X_{[m' \times k'']_{m, k}}].
\end{align*}

Finally, by \Cref{corollary empty intersection of schubert varieties}, we have
\begin{align}\label{step 5}
[X_{[m' \times k'']_{m, k}}] \cdot [X_{[m'' \times k''] \sqcup [m' \times k']}] = [\operatorname{pt}].
\end{align}

This completes the proof of \Cref{theorem triple intersection}.
\end{proof}

\begin{thm}\label{theorem normally nonsingular integration}
We assume that $k' > 0$.
Suppose that $a' \in \mathcal{P}(m', k')$ is given such that $a_{1}' = m''$ and $\operatorname{max} \{t \in \mathbb{Z} | 1 \leq t \leq k, \; a_{t}' = a_{1}'\} = k' - k''$.
Let $a'' \in \mathcal{P}(m'', k'')$ be defined by $a_{t}'' = m'' - a_{k'+1-t}'$ for $1 \leq t \leq k''$ (see \Cref{figure complementary partition}).
Let $c \ell = (c \ell^{\ast}, c \ell_{\ast})$ be a Gysin coherent characteristic class with respect to $\mathcal{X}$.
Then, for $b' \in \mathcal{P}(m', k')$ with $|b'| = |a'|$, we have $\langle c \ell^{\ast} \rangle(b', a'') = \delta_{a'b'}$.
\end{thm}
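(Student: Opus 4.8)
The plan is to show that the Kronecker pairing defining $\langle c\ell^{\ast}\rangle(b',a'')$ in~(\ref{equation integral constant in recursive formula}) collapses, for degree reasons, to a classical intersection number of Schubert cycles in the ambient Grassmannian $G$, which is then evaluated by \Cref{theorem triple intersection}. To begin, I would carry out a degree count. By hypothesis $a_{1}' = \dots = a_{k'-k''}' = m''$, and by definition $a_{t}'' = m'' - a_{k'+1-t}'$, so $|a'| + |a''| = (k'-k'')m'' + k''m'' = k'm''$. On the other hand $c = c'\sqcup c''$ is the partition with $k''$ rows of length $m = m'+m''$ and $k'$ rows of length $m'$, so $X_{c}(F_{\ast})$ — and hence its dense smooth locus $M$ — has complex codimension $km - |c| = k'm''$ in $G$, and the same in $W$. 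Since $|b'| = |a'|$, the class $[X_{b'}]_{G'}\times[X_{a''}]_{G''}$ has real degree $2(|b'|+|a''|) = 2k'm''$, whence $f^{!}\Sigma_{\ast}^{W}([X_{b'}]_{G'}\times[X_{a''}]_{G''}) \in H_{0}(M)$. Because $c\ell^{0}(f) = 1$, the inverse $c\ell^{\ast}(f)^{-1}$ also has leading term $1$, and only its degree-zero component can pair nontrivially with a degree-zero homology class; therefore
\[
\langle c\ell^{\ast}\rangle(b',a'') = \langle 1,\, f^{!}\Sigma_{\ast}^{W}([X_{b'}]_{G'}\times[X_{a''}]_{G''})\rangle = \varepsilon_{\ast}\bigl(f^{!}\Sigma_{\ast}^{W}([X_{b'}]_{G'}\times[X_{a''}]_{G''})\bigr).
\]

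Next I would give this number a geometric meaning. Fix flags $E_{\ast}'$ on $\mathbb{C}^{n'}$ and $E_{\ast}''$ on $\mathbb{C}^{n''}$, and set $K := \Sigma^{W}(X_{b'}(E_{\ast}')\times X_{a''}(E_{\ast}''))$; by \Cref{proposition segre avoids singular locus} this is a compact irreducible subvariety of $W = G\setminus Z$ disjoint from $Z = \Sing X_{c}(F_{\ast})$, and $\Sigma_{\ast}^{W}([X_{b'}]_{G'}\times[X_{a''}]_{G''}) = [K]_{W}$ because $\Sigma^{W}$ is a closed embedding. Since the quantity $\langle c\ell^{\ast}\rangle(b',a'')$ is independent of $F_{\ast}$ (\Cref{proposition invariance of integral coefficient for recursive formula}), of the monomorphisms $\iota',\iota''$ (\Cref{proposition homotopic monomorphisms yield homotopic segre restrictions}), and of $E_{\ast}',E_{\ast}''$ (fundamental classes of Schubert varieties do not depend on the flag), I would first fix $K$ and only then move $F_{\ast}$ by a generic $g\in GL_{n}(\mathbb{C})$ — staying inside the open condition $\mathbb{C}^{n} = F_{n'}\oplus\operatorname{im}(\iota'')$ — so that, by the two Kleiman transversality theorems (\Cref{thm kleiman for whitney transversality} and \Cref{thm.kleiman}), $X_{c}(F_{\ast})$ is simultaneously Whitney transverse and generically transverse to $K$ in $G$. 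Because $K$ avoids $Z$, it is then transverse to $M$ in $W$, and $K\cap M = K\cap X_{c}(F_{\ast})$ is a finite set of reduced points (only the top, smooth stratum of $K$ can meet $M$, by the complementary-dimension count). Now \Cref{lem.gysinfund}, applied with its ``$X$'' taken to be $W$ itself, gives $f^{!}[K]_{W} = [K\cap M]_{M}$, so that $\varepsilon_{\ast}(f^{!}[K]_{W}) = \#(K\cap M)$.

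It remains to identify this point count as a triple intersection of Schubert classes and to evaluate it. On the one hand, \Cref{lemma intersection is cap product for whitney transverse subvarieties}, applied to the transverse $0$-dimensional intersection, gives $\#(K\cap X_{c}(F_{\ast})) = \varepsilon_{\ast}([K]_{G}\cdot[X_{c}]_{G})$. On the other hand, $[K]_{G}$ is the image of $[K]_{W}$ in $H_{\ast}(G)$, namely $\Sigma_{\ast}([X_{b'}]_{G'}\times[X_{a''}]_{G''})$, and combining the compatibility diagram~(\ref{equation compatibility of Segre products}) with \Cref{corollary twisting of partitions} (using $X_{c'} = G'$ and $X_{c''} = G''$) identifies this class with $[X_{a''\sqcup c'}]_{G}\cdot[X_{b'\sqcup c''}]_{G}$ — should the equivalent form $[X_{b'\sqcup a''}]_{G}\cdot[X_{c''\sqcup c'}]_{G}$ arise instead, the Intersection Box Extension Principle (\Cref{corollary twisting rule}) reconciles the two. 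Hence
\[
\langle c\ell^{\ast}\rangle(b',a'') = \varepsilon_{\ast}\bigl([X_{a''\sqcup c'}]_{G}\cdot[X_{b'\sqcup c''}]_{G}\cdot[X_{c'\sqcup c''}]_{G}\bigr),
\]
and since the hypotheses on $a',a'',b'$ are precisely those of \Cref{theorem triple intersection}, that result yields $[X_{a''\sqcup c'}]_{G}\cdot[X_{b'\sqcup c''}]_{G}\cdot[X_{c}]_{G} = \delta_{a'b'}\cdot[\operatorname{pt}]_{G}$, and therefore $\langle c\ell^{\ast}\rangle(b',a'') = \delta_{a'b'}$.

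The step I expect to be the main obstacle is the geometric interpretation carried out in the second paragraph: producing a genuinely transverse configuration so that the Gysin map becomes a point count and that count becomes a classical intersection number. The delicate point is that $\Sigma^{W}$ is tied to $F_{\ast}$ through the splitting $\mathbb{C}^{n} = F_{n'}\oplus\operatorname{im}(\iota'')$, so $K$ and $X_{c}(F_{\ast})$ cannot be perturbed independently; this is exactly why the flag-independence results must be invoked so as to fix $K$ before genericizing $F_{\ast}$. One must also check that Whitney transversality and generic transversality can be arranged simultaneously — by intersecting the two dense open subsets of $GL_{n}(\mathbb{C})$ supplied by Kleiman's theorems — and that the transverse intersection points count with sign $+1$, which holds because all relevant spaces are complex with their complex orientations.
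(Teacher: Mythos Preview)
Your proposal is correct and follows essentially the same approach as the paper: a degree count reduces the pairing to an augmentation, a geometric realization of the Segre image together with \Cref{lem.gysinfund} turns the Gysin map into a point count, and \Cref{theorem triple intersection} evaluates the resulting triple intersection number. The only cosmetic difference is that the paper fixes explicit flags $D_{\ast},E_{\ast},F_{\ast}$ from the outset and uses \Cref{proposition segre product of schubert varieties} to identify $K$ set-theoretically as the Richardson variety $R_{ab}=X_{a}(D_{\ast})\cap X_{b}(E_{\ast})$ before intersecting with $X_{c}(F_{\ast})$, whereas you invoke the independence results to postpone the choice of $F_{\ast}$ and compute $[K]_{G}$ homologically via \Cref{corollary twisting of partitions}; both routes yield the same triple product $[X_{a}]_{G}\cdot[X_{b}]_{G}\cdot[X_{c}]_{G}$.
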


\begin{proof}
Fix monomorphisms $\iota' \colon \mathbb{C}^{n'} \rightarrow \mathbb{C}^{n}$ and $\iota'' \colon \mathbb{C}^{n''} \rightarrow \mathbb{C}^{n}$ such that $\mathbb{C}^{n} = \operatorname{im}(\iota') \oplus \operatorname{im}(\iota'')$.
We set $V' = \operatorname{im}(\iota')$ and $V'' = \operatorname{im}(\iota'')$.
Fix pairs of transverse flags $(D_{\ast}', E_{\ast}')$ on $V'$ and $(D_{\ast}'', E_{\ast}'')$ on $V''$.
Using the notation introduced in \Cref{Intersection of Schubert varieties}, it follows that $D_{\ast} = D_{\ast}'' \oplus D_{\ast}'$ and $E_{\ast} = E_{\ast}' \oplus E_{\ast}''$ are transverse flags on $\mathbb{C}^{n}$.
(Note that we write $\mathbb{C}^{n} = V'' \oplus V'$ for the definition of $D_{\ast}$, and $\mathbb{C}^{n} = V' \oplus V''$ for the definition of $E_{\ast}$.)
Fix $b' \in \mathcal{P}(m', k')$, and define $a, b \in \mathcal{P}(m, k)$ by $a = a'' \sqcup c'$ and $b = b' \sqcup c''$.
By \Cref{proposition transverse flags imply whitney transverse}, the Schubert subvarieties $X_{a}(D_{\ast}), X_{b}(E_{\ast}) \subset G$ are simultaneously Whitney transverse and generically transverse.
In particular, by \Cref{prop codimension of transverse intersection}, the intersection $R_{ab} := X_{a}(D_{\ast}) \cap X_{b}(E_{\ast}) \subset G$ is a pure-dimensional closed subvariety.
By \Cref{proposition existence of flag with transversality properties}, we may choose a flag $F_{\ast}$ on $\mathbb{C}^{n}$ that is transverse to $D_{\ast}$, and such that the Schubert subvariety $X_{c}(F_{\ast}) \subset G$ is simultaneously Whitney transverse and generically transverse to $R_{ab} \subset G$.
In particular, by \Cref{prop codimension of transverse intersection}, the intersection $R_{abc} := R_{ab} \cap X_{c}(F_{\ast}) = X_{a}(D_{\ast}) \cap X_{b}(E_{\ast}) \cap X_{c}(F_{\ast}) \subset G$ is a pure-dimensional closed subvariety.
Let $Z$ denote the singular set of the Schubert subvariety $X_{c}(F_{\ast}) \subset G$.
Then, we define the smooth irreducible quasiprojective complex algebraic variety $W = G \setminus Z$, and the smooth irreducible closed subvariety $M = X_{c}(F_{\ast}) \setminus Z \subset W$ given by the set of nonsingular points of $X_{c}(F_{\ast})$.
Let $f^{!} \colon H_{\ast}(W) \rightarrow H_{\ast-2k'm''}(M)$ denote the Gysin map associated to the inclusion $f \colon M \hookrightarrow W$, where we note that $M \subset W$ is a smooth submanifold of (real) codimension $2k'm''$ that is closed as a subset.
Since the flags $D_{\ast}$ and $F_{\ast}$ are transverse, we have $\mathbb{C}^{n} = F_{n'} \oplus D_{n''} = F_{n'} \oplus \operatorname{im}(\iota'')$.
Therefore, the Segre product $\Sigma = \Sigma_{\iota', \iota''}^{k', k''} \colon G' \times G'' \rightarrow G$ defined in (\ref{equation segre product in terms of monomorphisms}) associated to the monomorphisms $\iota' \colon \mathbb{C}^{n'} \rightarrow \mathbb{C}^{n}$ and $\iota'' \colon \mathbb{C}^{n''} \rightarrow \mathbb{C}^{n}$ restricts by \Cref{proposition segre avoids singular locus} to a map $\Sigma^{W} \colon G' \times G'' \rightarrow W$.
According to the commutative diagram (\ref{equation compatibility of Segre products}), the map $\Sigma \colon G' \times G'' \rightarrow G$ is related to the Segre product $S = S_{V', V''}^{k', k''} \colon G_{k'}(V') \times G_{k''}(V'') \hookrightarrow G$ by $\Sigma = S \circ (\Phi' \times \Phi'')$, where $\Phi' \colon G' \cong G_{k'}(V')$ and $\Phi'' \colon G'' \cong G_{k''}(V'')$ are the isomorphisms induced by the monomorphisms $\iota'$ and $\iota''$, respectively.
Hence, we have $\Sigma^{W} = S^{W} \circ (\Phi' \times \Phi'')$, where $S^{W} \colon G_{k'}(V') \times G_{k''}(V'') \hookrightarrow W$ denotes the restriction of the closed embedding $S$.
By applying the induced map on homology $\Sigma_{\ast}^{W} \colon H_{\ast}(G' \times G'') \rightarrow H_{\ast}(W)$ to
$$
[X_{b'}]_{G'} \times [X_{a''}]_{G''} = [X_{b'}(E_{\ast}')]_{G'} \times [X_{a''}(D_{\ast}'')]_{G''} = [X_{b'}(E_{\ast}') \times X_{a''}(D_{\ast}'')]_{G' \times G''},
$$
it follows that
\begin{equation}\label{equation application of segre product}
\Sigma^{W}_{\ast}([X_{b'}]_{G'} \times [X_{a''}]_{G''}) = S^{W}_{\ast} [X_{b'}(E_{\ast}') \times X_{a''}(D_{\ast}'')]_{G_{k'}(V') \times G_{k''}(V'')} = [S(X_{b'}(E_{\ast}') \times X_{a''}(D_{\ast}''))]_{W}.
\end{equation}
Furthermore, \Cref{proposition segre product of schubert varieties} implies that
\begin{equation*}
S((X_{b'}(E_{\ast}') \cap X_{c'}(D_{\ast}')) \times (X_{c''}(E_{\ast}'') \cap X_{a''}(D_{\ast}''))) = X_{b' \sqcup c''}(E_{\ast}' \oplus E_{\ast}'') \cap X_{a'' \sqcup c'}(D_{\ast}'' \oplus D_{\ast}'),
\end{equation*}
that is,
\begin{equation}\label{equation segre product of specific schubert}
S(X_{b'}(E_{\ast}') \times X_{a''}(D_{\ast}'')) = X_{b}(E_{\ast}) \cap X_{a}(D_{\ast}) = R_{ab}.
\end{equation}
In particular, we have $R_{ab} \subset S(G_{k'}(V') \times G_{k''}(V'')) = \Sigma(G' \times G'') \subset W$.
Consequently, we have
\begin{equation}\label{equation triple intersection alternative description}
R_{abc} = R_{ab} \cap X_{c}(F_{\ast}) = R_{ab} \cap M.
\end{equation}
Since $X_{c}(F_{\ast})$ and $R_{ab}$ are Whitney transverse in $G$, we may fix transverse Whitney stratifications on $X_{c}(F_{\ast})$ and $R_{ab}$.
By virtue of \Cref{lemma whitney restratification manifold}, it follows that $M$ and $R_{ab}$ are Whitney transverse in $W$, where $M$ is equipped with the trivial stratification with single stratum $M$.
Then, we apply \Cref{lem.gysinfund} to the Whitney stratified subspaces $X := W$ (equipped with the trivial stratification with single stratum $W$) and $K := R_{ab}$ of $W$ (both of which are pure-dimensional closed subvarieties of $W$, and hence oriented pseudomanifolds), and to the oriented smooth submanifold $M \subset W$ that is closed as a subset and clearly transverse to $X = W$, and also transverse to $K = R_{ab}$.
Consequently,
\begin{equation}\label{equation gysin fundamental class}
f^{!} [R_{ab}]_{W} = [R_{ab} \cap M]_{M}.
\end{equation}
Altogether, we obtain
\begin{align}\label{equation intermediate gysin}
f^{!} \Sigma_{\ast}^{W}([X_{b'}]_{G'} \times [X_{a''}]_{G''}) &\stackrel{(\ref{equation application of segre product})}{=} f^{!} [\Sigma^{W}(X_{b'}(E_{\ast}') \times X_{a''}(D_{\ast}''))]_{W} \stackrel{(\ref{equation segre product of specific schubert})}{=} f^{!} [R_{ab}]_{W} \stackrel{(\ref{equation gysin fundamental class})}{=} [R_{ab} \cap M]_{M} \stackrel{(\ref{equation triple intersection alternative description})}{=} [R_{abc}]_{M}.
\end{align}
Since $R_{ab}$ and $X_{c}(F_{\ast})$, as well as $X_{a}(D_{\ast})$ and $X_{b}(E_{\ast})$ are generically transverse in $G$ by construction and $R_{ab}$ is irreducible by (\ref{equation segre product of specific schubert}), we may apply \Cref{lemma intersection is cap product for whitney transverse subvarieties} twice to obtain
\begin{equation}\label{equation product rule for triple}
[R_{abc}]_{G} = [R_{ab} \cap X_{c}(F_{\ast})]_{G} = [R_{ab}]_{G} \cdot [X_{c}]_{G} = [X_{a}]_{G} \cdot [X_{b}]_{G} \cdot [X_{c}]_{G}.
\end{equation}
Consequently, using the inclusion map $h \colon M \rightarrow G$, we obtain
\begin{align*}
h_{\ast} f^{!} \Sigma^{W}_{\ast}([X_{b'}]_{G'} \times [X_{a''}]_{G''}) &\stackrel{(\ref{equation intermediate gysin})}{=} h_{\ast}[R_{ab} \cap M]_{M} = [R_{ab} \cap M]_{G} \stackrel{(\ref{equation product rule for triple})}{=} [X_{a}]_{G} \cdot [X_{b}]_{G} \cdot [X_{c}]_{G}.
\end{align*}
Finally, under the given assumptions on $b' \in \mathcal{P}(m', k')$ and $a'' \in \mathcal{P}(m'', k'')$, we see that $f^{!} \Sigma^{W}_{\ast}([X_{b'}]_{G'} \times [X_{a''}]_{G''}) \in H_{0}(M)$.
Hence, using the normalization $(c \ell^{\ast}(f)^{-1})^{0} = 1$, \Cref{theorem triple intersection} yields
$$
\langle c \ell^{\ast} \rangle(b', a'') = \varepsilon_{\ast}f^{!}\Sigma^{W}_{\ast}([X_{b'}]_{G'} \times [X_{a''}]_{G''}) = \varepsilon_{\ast}h_{\ast}f^{!}\Sigma^{W}_{\ast}([X_{b'}]_{G'} \times [X_{a''}]_{G''}) = \varepsilon_{\ast}([X_{a}]_{G} \cdot [X_{b}]_{G} \cdot [X_{c}]_{G}) = \delta_{a'b'}.
$$

This completes the proof of \Cref{theorem normally nonsingular integration}.
\end{proof}

\begin{remark}\label{remark integral expression}
The proof of \Cref{theorem normally nonsingular integration} shows that for suitable flags $D_{\ast}$, $E_{\ast}$, and $F_{\ast}$ on $\mathbb{C}^{n}$, the expression $\langle c \ell^{\ast} \rangle(b', a'')$ (\ref{equation integral constant in recursive formula}) can be computed for all $b' \in \mathcal{P}(m', k')$ and $a'' \in \mathcal{P}(m'', k'')$ by
$$
\langle c \ell^{\ast} \rangle(b', a'') = \langle c \ell^{\ast}(f)^{-1}, f^{!}\Sigma_{\ast}^{W}([X_{b'}]_{G'} \times [X_{a''}]_{G''}) \rangle \stackrel{(\ref{equation intermediate gysin})}{=} \langle c \ell^{\ast}(f)^{-1}, R_{abc} \rangle =: \int_{R_{abc}} c \ell^{\ast}(f)^{-1},
$$
that is, by integration of the class $c \ell^{\ast}(f)^{-1} \in H^{\ast}(M)$ over the subvariety $R_{abc} = X_{a}(D_{\ast}) \cap X_{b}(E_{\ast}) \cap X_{c}(F_{\ast}) \subset M$.
(Note that $R_{abc}$ might not be irreducible according to \cite[Remark 2.2]{bc}.)
\end{remark}

\subsection{Genera of Characteristic Subvarieties}\label{Characteristic Subvarieties}
We turn to the construction and analysis of the genera $|c \ell_{\ast}|(i', i'') \in \mathbb{Q}$ associated to a Gysin coherent characteristic class $c \ell = (c \ell^{\ast}, c \ell_{\ast})$ with respect to $\mathcal{X}$, where $i' \colon X' \hookrightarrow G'$ and $i'' \colon X'' \hookrightarrow G''$ are inclusions of possibly singular irreducible compact subvarieties in ambient Grassmannians.
Here, the notion of $\mathcal{X}$-transversality with respect to the fixed family $\mathcal{X}$ of admissible embeddings enters for the first time.

We continue to use the notation of \Cref{Normally Nonsingular Integration}.
That is, $m', k' \geq 0$ and $m'', k'' \geq 0$ are integers, and we define the integers $m, k \geq 0$ by $m = m'+m''$ and $k=k'+k''$, and the integers $n, n', n'' \geq 0$ by $n = m+k$, $n' = m'+k'$, and $n'' = m''+k''$.
Using the notation introduced in \Cref{Intersection of Schubert varieties}, let $c' = [m' \times k'] \in \mathcal{P}(m', k')$, $c'' = [m'' \times k''] \in \mathcal{P}(m'', k'')$, and $c = c' \sqcup c'' \in \mathcal{P}(m, k)$ (see \Cref{figure proof partitions c}(a)).
Finally, let $G = G_{k}(\mathbb{C}^{n})$, $G' = G_{k'}(\mathbb{C}^{n'})$, and $G'' = G_{k''}(\mathbb{C}^{n''})$.

The main result of this section is the following

\begin{thm}\label{main theorem signature independent of choices}
Let $i' \colon X' \hookrightarrow G'$ be the inclusion of an irreducible closed algebraic subvariety $X' \subset G'$ of dimension $d'$, and let $i'' \colon X'' \hookrightarrow G''$ be the inclusion of an irreducible closed algebraic subvariety $X'' \subset G''$ of dimension $d''$.
Fix monomorphisms $\iota' \colon \mathbb{C}^{n'} \rightarrow \mathbb{C}^{n}$ and $\iota'' \colon \mathbb{C}^{n''} \rightarrow \mathbb{C}^{n}$ such that $\mathbb{C}^{n} = \operatorname{im}(\iota') \oplus \operatorname{im}(\iota'')$.
Let $\Sigma = \Sigma_{\iota', \iota''}^{k', k''} \colon G' \times G'' \rightarrow G$ be the associated Segre product defined in (\ref{equation segre product in terms of monomorphisms}).
Let $X \subset G$ be the irreducible closed algebraic subvariety of dimension $d = d' + d''$ given by the Segre product $X = \Sigma(X' \times X'') \subset G$ of $X'$ and $X''$.
Let $c \ell = (c \ell^{\ast}, c \ell_{\ast})$ be a Gysin coherent characteristic class with respect to $\mathcal{X}$ such that $i', i'' \in \mathcal{X}$.
Suppose that $F_{\ast}$ is a flag on $\mathbb{C}^{n}$ such that $\mathbb{C}^{n} = F_{n'} \oplus \operatorname{im}(\iota'')$, and such that the Schubert subvariety $X_{c}(F_{\ast}) \subset G$ is $\mathcal{X}$-transverse to $X \subset G$.
(Such a flag $F_{\ast}$ exists by an argument similar to the proof of \Cref{proposition existence of flag with transversality properties}, where we note that the embeddings $X \hookrightarrow G$ and $X_{c} \hookrightarrow G$ are both in $\mathcal{X}$ so that the Kleiman transversality axiom is applicable.)
Then, the associated family of coefficients $\{\lambda^{a}_{i}\}_{i, a}$ introduced in (\ref{equation l class linear combination of schubert cycles}) satisfies
\begin{equation}\label{equation signature well defined}
|c\ell_{\ast}|(X \cap X_{c}(F_{\ast}) \subset G) = \sum_{r = 0}^{l} \sum_{\substack{0 \leq r' \leq l, \\ 0 \leq r'' \leq l, \\ r' + r'' = r}} \sum_{\substack{b' \in \mathcal{P}(m', k'), \\ |b'| = d'-r'}} \sum_{\substack{b'' \in \mathcal{P}(m'', k''), \\ |b''| = d''-r''}} \lambda_{i'}^{b'} \lambda_{i''}^{b''} \cdot \langle c \ell^{\ast} \rangle(b', b''),
\end{equation}
where $l := d-k'm''$, and $\langle c \ell^{\ast} \rangle(b', b'') \in \mathbb{Q}$ is defined in (\ref{equation integral constant in recursive formula}).
Consequently, by \Cref{proposition invariance of integral coefficient for recursive formula}, the characteristic number
$$
|c\ell_{\ast}|(i', i'') := |c\ell_{\ast}|(X \cap X_{c}(F_{\ast}) \subset G)
$$
is independent of all choices involved.
(That is, $|c\ell_{\ast}|(i', i'')$ does not depend on the choice of the monomorphisms $\iota'$ and $\iota''$, and the choice of the flag $F_{\ast}$.)
\end{thm}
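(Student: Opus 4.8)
The plan is to excise the singular locus $Z$ of $X_{c}(F_{\ast})$, apply the Gysin axiom on $W = G \setminus Z$ to $f \colon M = X_{c}(F_{\ast}) \setminus Z \hookrightarrow W$ and to the inclusion $X \hookrightarrow W$, and feed in the Segre multiplicativity of $c\ell_{\ast}$ together with the definition of $\langle c\ell^{\ast} \rangle$; the coefficients $\lambda$ then enter through the Schubert expansions (\ref{equation l class linear combination of schubert cycles}) of $c\ell_{\ast}(i')$ and $c\ell_{\ast}(i'')$. First I would set up $W$, $M$, $f$ exactly as in \Cref{proposition segre avoids singular locus} and \Cref{proposition invariance of integral coefficient for recursive formula}, and verify the hypotheses of axiom (\ref{axiom gysin}). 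The decisive geometric point is that $X = \Sigma(X' \times X'') \subset \Sigma(G' \times G'') \subset W$, which holds by \Cref{proposition segre avoids singular locus} because $\mathbb{C}^{n} = F_{n'} \oplus \operatorname{im}(\iota'')$; in particular $X \cap Z = \varnothing$. Hence the locality property of $\mathcal{X}$-transversality, applied to the open subset $W \subset G$, shows that the irreducible smooth locus $M = X_{c}(F_{\ast}) \cap W$ and $X = X \cap W$ are $\mathcal{X}$-transverse in $W$, with $M \cap X = X \cap X_{c}(F_{\ast}) =: Y$. To see that $X \hookrightarrow W$ lies in $\mathcal{X}$, I would chase the closure properties of $\mathcal{X}$: the product $i' \times i''$ is in $\mathcal{X}$; the Segre map restricts to an isomorphism $G' \times G'' \xrightarrow{\cong} N := \Sigma(G' \times G'')$ carrying $X' \times X''$ onto $X$, so $X \hookrightarrow N$ is in $\mathcal{X}$ by isomorphism invariance of the family; and since $N$ is smooth and closed in $W$, enlarging the ambient variety from $N$ to $W$ keeps the inclusion in $\mathcal{X}$. (The real codimension $2k'm''$ of $M$ in $W$ is that of $X_{c}(F_{\ast})$ in $G$, as recorded in the statement.)

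By multiplicativity (axiom (\ref{axiom multiplicativity})) we have $c\ell_{\ast}(i' \times i'') = c\ell_{\ast}(i') \times c\ell_{\ast}(i'') \in H_{\ast}(G' \times G'')$; transporting this along the isomorphism $G' \times G'' \cong N$ via isomorphism invariance (axiom (\ref{axiom isomorphism})) and then enlarging the ambient variety to $W$ via naturality (axiom (\ref{axiom locality})) gives $c\ell_{\ast}(X \hookrightarrow W) = \Sigma_{\ast}^{W}(c\ell_{\ast}(i') \times c\ell_{\ast}(i''))$. Applying axiom (\ref{axiom gysin}) to $f \colon M \hookrightarrow W$ and $X \hookrightarrow W$ yields $f^{!} c\ell_{\ast}(X \hookrightarrow W) = c\ell^{\ast}(f) \cap c\ell_{\ast}(j)$ with $j \colon Y \hookrightarrow M$; since $c\ell^{0}(f) = 1$, the class $c\ell^{\ast}(f)$ is a unit in $H^{\ast}(M; \mathbb{Q})$, so capping with $c\ell^{\ast}(f)^{-1}$ and applying the augmentation produces
\[ \varepsilon_{\ast} c\ell_{\ast}(j) = \langle c\ell^{\ast}(f)^{-1},\ f^{!}\Sigma_{\ast}^{W}(c\ell_{\ast}(i') \times c\ell_{\ast}(i'')) \rangle, \]
where the right-hand side is read with the present flag $F_{\ast}$ in the Gysin data; this is legitimate since $F_{\ast}$ is a flag on $\mathbb{C}^{n}$ with $\mathbb{C}^{n} = F_{n'} \oplus \operatorname{im}(\iota'')$, hence an admissible choice in \Cref{proposition invariance of integral coefficient for recursive formula}. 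Moreover, $Y$ is contained in the smooth subvariety $N$ (closed in $W$ and in $G$) and in the smooth subvariety $M$ (closed in $W$), so naturality yields $\varepsilon_{\ast} c\ell_{\ast}(j) = \varepsilon_{\ast} c\ell_{\ast}(Y \hookrightarrow W) = \varepsilon_{\ast} c\ell_{\ast}(Y \hookrightarrow G) = |c\ell_{\ast}|(X \cap X_{c}(F_{\ast}) \subset G)$.

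It then remains to expand and reindex. Inserting the Schubert expansions $c\ell_{\ast}(i') = \sum_{s'} \sum_{|b'| = s'} \lambda_{i'}^{b'}[X_{b'}]_{G'}$ and $c\ell_{\ast}(i'') = \sum_{s''} \sum_{|b''| = s''} \lambda_{i''}^{b''}[X_{b''}]_{G''}$, using bilinearity of the Kronecker product, and identifying each term $\langle c\ell^{\ast}(f)^{-1}, f^{!}\Sigma_{\ast}^{W}([X_{b'}]_{G'} \times [X_{b''}]_{G''}) \rangle$ with $\langle c\ell^{\ast} \rangle(b', b'')$ by (\ref{equation integral constant in recursive formula}), one obtains
\[ |c\ell_{\ast}|(X \cap X_{c}(F_{\ast}) \subset G) = \sum_{s' = 0}^{d'} \sum_{s'' = 0}^{d''} \sum_{|b'| = s'} \sum_{|b''| = s''} \lambda_{i'}^{b'}\lambda_{i''}^{b''}\, \langle c\ell^{\ast} \rangle(b', b''). \]
Writing $s' = d' - r'$ and $s'' = d'' - r''$, the class $f^{!}\Sigma_{\ast}^{W}([X_{b'}]_{G'} \times [X_{b''}]_{G''})$ lies in homological degree $2(l - r' - r'')$ with $l = d - k'm''$, so any term with $r' + r'' > l$ pairs to zero and may be discarded, while terms with $r' > d'$ or $r'' > d''$ have empty index sets; this turns the double sum into exactly the right-hand side of (\ref{equation signature well defined}). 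Finally, that right-hand side depends only on $i'$, $i''$ and $c\ell$: the coefficients $\lambda_{i'}^{b'}$ and $\lambda_{i''}^{b''}$ are intrinsic to $i'$ and $i''$ by (\ref{equation l class linear combination of schubert cycles}), and $\langle c\ell^{\ast} \rangle(b', b'')$ is independent of the flag by \Cref{proposition invariance of integral coefficient for recursive formula} and of the monomorphisms $\iota'$, $\iota''$ by \Cref{proposition homotopic monomorphisms yield homotopic segre restrictions}. Hence $|c\ell_{\ast}|(X \cap X_{c}(F_{\ast}) \subset G)$ does not depend on the monomorphisms or the flag, and $|c\ell_{\ast}|(i', i'')$ is well defined.

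I expect the main obstacle to be the careful verification that axiom (\ref{axiom gysin}) is applicable: chasing the closure properties of the family $\mathcal{X}$ to place $X \hookrightarrow W$ in $\mathcal{X}$, and invoking locality of $\mathcal{X}$-transversality, which hinges on $X \subset W$ (i.e. on \Cref{proposition segre avoids singular locus}). The reindexing and the discarding of the negative-degree contributions are routine bookkeeping.
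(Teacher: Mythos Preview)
Your proposal is correct and follows essentially the same route as the paper's proof: excise the singular locus to form $W$ and $M$, use \Cref{proposition segre avoids singular locus} to place $X$ inside $W$, combine multiplicativity, isomorphism invariance, and naturality to obtain $c\ell_{\ast}(X \hookrightarrow W) = \Sigma^{W}_{\ast}(c\ell_{\ast}(i') \times c\ell_{\ast}(i''))$, apply the Gysin axiom, extract the degree-zero part under augmentation, and expand in the Schubert basis to recognize the $\langle c\ell^{\ast}\rangle(b',b'')$. If anything, you are slightly more explicit than the paper in two places---verifying via the closure properties of $\mathcal{X}$ that $X \hookrightarrow W$ lies in $\mathcal{X}$, and using naturality through $N = \Sigma(G' \times G'')$ to identify $\varepsilon_{\ast}c\ell_{\ast}(j)$ with $|c\ell_{\ast}|(Y \subset G)$---both of which the paper leaves implicit.
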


\begin{proof}
Let $Z$ denote the singular set of the Schubert subvariety $X_{c}(F_{\ast}) \subset G$.
Then, we define the smooth irreducible quasiprojective complex algebraic variety $W = G \setminus Z$.
Since $\mathbb{C}^{n} = F_{n'} \oplus \operatorname{im}(\iota'')$, we have $P := \Sigma(G' \times G'') \subset W$ by \Cref{proposition segre avoids singular locus}.
Let $i^{P} \colon X \rightarrow P$ and $f_{P} \colon P \rightarrow W$ be the inclusion maps, and let $i = f_{P} \circ i^{P} \colon X \rightarrow W$ be their composition.
Then, axiom (\ref{axiom locality}) of the Gysin coherent characteristic class $c \ell$ yields $c \ell_{\ast}(i) = f_{P \ast}c \ell_{\ast}(i^{P})$.
Moreover, by applying axiom (\ref{axiom isomorphism}) of the Gysin coherent characteristic class $c \ell$ to the commutative diagram
\begin{align*}
\xymatrix{
X' \times X'' \ar[rr]^{\Sigma|}_{\cong} \ar[d]_{i' \times i''} & & X \ar[d]^{i^{P}} \\
G' \times G'' \ar[rr]^{\Phi := \Sigma|}_{\cong} & & P,
}
\end{align*}
we obtain $c \ell_{\ast}(i^{P}) = \Phi_{\ast}c \ell_{\ast}(i' \times i'')$.
Furthermore, axiom (\ref{axiom multiplicativity}) states $c \ell_{\ast}(i' \times i'') = c \ell_{\ast}(i') \times c \ell_{\ast}(i'')$.
Altogether, we obtain
\begin{equation}\label{first step of proof of independence theorem}
c \ell_{\ast}(i) = f_{P \ast}c \ell_{\ast}(i^{P}) = f_{P \ast}\Phi_{\ast}c \ell_{\ast}(i' \times i'') = \Sigma^{W}_{\ast}(c \ell_{\ast}(i') \times c \ell_{\ast}(i'')),
\end{equation}
where $\Sigma^{W} := f_{P} \circ \Phi \colon G' \times G'' \rightarrow W$ denotes the composition.

Note that the class $c \ell_{\ast}(i') \in H_{\ast}(G'; \mathbb{Q})$ is concentrated in even degrees of the form $2d'-2r'$ for nonnegative integers $r'$ because $c \ell_{q}(i') = 0$ for $q > 2d' = 2\operatorname{dim}_{\mathbb{C}}X'$, and the homology of the Grassmannian $G'$ is concentrated in even degrees.
Similarly, the class $c \ell_{\ast}(i'') \in H_{\ast}(G''; \mathbb{Q})$ is concentrated in even degrees of the form $2d''-2r''$ for nonnegative integers $r''$.
Therefore, equation (\ref{first step of proof of independence theorem}) implies that the class $c \ell_{\ast}(i) \in H_{\ast}(W; \mathbb{Q})$ is concentrated in even degrees of the form $2d-2r$ for nonnegative integers $r$, and we have
\begin{align}\label{equation characteristic class of product of spaces}
c \ell_{2d-2r}(i) = \sum_{\substack{r', r'' \geq 0, \\ r' + r'' = r}} \Sigma_{\ast}^{W}(c \ell_{2d'-2r'}(i') \times c \ell_{2d''-2r''}(i'')), \qquad r \geq 0.
\end{align}

Next, we define the smooth irreducible closed subvariety $M = X_{c}(F_{\ast}) \setminus Z \subset W$ given by the set of nonsingular points of $X_{c}(F_{\ast})$.
Let $f \colon M \rightarrow W$ denote the inclusion map.
Since $X, X_{c}(F_{\ast}) \subset G$ are $\mathcal{X}$-transverse by assumption, it follows that $X \cap W = X$ and $X_{c}(F_{\ast}) \cap W = M$ are $\mathcal{X}$-transverse in $W$.
We apply axiom (\ref{axiom gysin}) of the Gysin coherent characteristic class $c \ell$ to the
pure-dimensional compact subvariety $Y := M \cap X = X_{c}(F_{\ast}) \cap X \subset M$ with inclusion map $j \colon Y \rightarrow M$ to obtain
\begin{equation}\label{equation gysin restriction in uniqueness proof}
c \ell^{\ast}(f)^{-1} \cap f^{!}c \ell_{\ast}(i) = c \ell_{\ast}(j) \qquad \in H_{\ast}(M; \mathbb{Q}).
\end{equation}

As shown above, the class $c \ell_{\ast}(i) \in H_{\ast}(W; \mathbb{Q})$ is concentrated in even degrees of the form $2d-2r$ for nonnegative integers $r$.
The Gysin map $f^{!}$ drops the degree in homology by $2k'm''$ because $M \subset W$ has complex codimension $k'm''$.
Setting $l := d-k'm''$, we conclude that $f^{!}c \ell_{\ast}(i) \in H_{\ast}(M; \mathbb{Q})$ is concentrated in even degrees of the form $2d-2r-2k'm'' = 2l-2r$ for nonnegative integers $r$.
Therefore, the part in $H_{0}(M; \mathbb{Q})$ of equation (\ref{equation gysin restriction in uniqueness proof}) is
\begin{equation}\label{equation zero degree part even degrees}
\sum_{r = 0}^{l} (c \ell^{\ast}(f)^{-1})^{2l-2r} \cap f^{!} c \ell_{2d-2r}(i) = c \ell_{0}(j) \quad \in H_{0}(M; \mathbb{Q}).
\end{equation}

By inserting equation (\ref{equation characteristic class of product of spaces}) into equation (\ref{equation zero degree part even degrees}), we obtain
\begin{equation*}
\sum_{r = 0}^{l} \sum_{\substack{r', r'' \geq 0, \\ r' + r'' = r}} (c \ell^{\ast}(f)^{-1})^{2l-2r} \cap f^{!} \Sigma_{\ast}^{W}(c \ell_{2d'-2r'}(i') \times c \ell_{2d''-2r''}(i'')) = c \ell_{0}(j).
\end{equation*}
Therefore, by applying the augmentation $\varepsilon_{\ast} \colon H_{\ast}(M; \mathbb{Q}) \rightarrow \mathbb{Q}$, we have
\begin{equation*}
\sum_{r = 0}^{l} \sum_{\substack{r', r'' \geq 0, \\ r' + r'' = r}} \langle c \ell^{\ast}(f)^{-1}, f^{!} \Sigma_{\ast}^{W}(c \ell_{2d'-2r'}(i') \times c \ell_{2d''-2r''}(i'')) \rangle = |c\ell_{\ast}|(j).
\end{equation*}
Using (\ref{equation l class linear combination of schubert cycles}) to write
\begin{align*}
c \ell_{2d'-2r'}(i') &= \sum_{\substack{b' \in \mathcal{P}(m', k'), \\ |b'| = d'-r'}} \lambda_{i'}^{b'} \cdot [X_{b'}]_{G'}, \\
c \ell_{2d''-2r''}(i'') &= \sum_{\substack{b'' \in \mathcal{P}(m'', k''), \\ |b''| = d''-r''}} \lambda_{i''}^{b''} \cdot [X_{b''}]_{G''},
\end{align*}
we obtain
$$
|c\ell_{\ast}|(j) = \sum_{r = 0}^{l} \sum_{\substack{r', r'' \geq 0, \\ r' + r'' = r}} \sum_{\substack{b' \in \mathcal{P}(m', k'), \\ |b'| = d'-r'}} \sum_{\substack{b'' \in \mathcal{P}(m'', k''), \\ |b''| = d''-r''}} \lambda_{i'}^{b'} \lambda_{i''}^{b''} \cdot \langle c \ell^{\ast}(f)^{-1}, f^{!} \Sigma_{\ast}^{W}([X_{b'}]_{G'} \times [X_{b''}]_{G''}) \rangle.
$$
Hence, equation (\ref{equation signature well defined}) follows in view of (\ref{equation integral constant in recursive formula}).
Consequently, the expression $|c\ell_{\ast}|(i', i'') := |c\ell_{\ast}|(X \cap X_{c}(F_{\ast}) \subset G)$ is independent of all choices involved.

This completes the proof of \Cref{main theorem signature independent of choices}.
\end{proof}

\begin{remark}\label{remark characteristic varieties}
The characteristic number $|c\ell_{\ast}|(i', i'')$ introduced in \Cref{main theorem signature independent of choices} is defined as the genus of the inclusion $X \cap X_{c}(F_{\ast}) \subset G$, which may be called a characteristic subvariety in view of its role in \Cref{main theorem normally nonsingular expansion}.
Note that if the spaces $X'$ and $X''$ are Schubert varieties, then it follows from \Cref{proposition segre product of schubert varieties} that the characteristic subvariety is the intersection of three Schubert varieties in general position.
It is worthwhile mentioning that for a finite number of given Gysin coherent characteristic classes with respect to $\mathcal{X}$, we may assume that the same notion of $\mathcal{X}$-transversality applies in axiom (\ref{axiom gysin}), so that we can simultaneously use the same collection of characteristic subvarieties in their computation.
\end{remark}

\subsection{Proof of \Cref{main theorem normally nonsingular expansion}}\label{proof of normally nonsingular expansion}
We apply \Cref{main theorem signature independent of choices} to the inclusions $i' \colon X' \hookrightarrow G'$ and $i'' \colon X'' := X_{a''}(D_{\ast}'') \hookrightarrow G''$, which are both contained in $\mathcal{X}$, to obtain
\begin{equation}\label{equation signature well defined used in proof of main theorem}
|c\ell_{\ast}|(i', i'') = \sum_{r = 0}^{l} \sum_{\substack{0 \leq r' \leq l, \\ 0 \leq r'' \leq l, \\ r' + r'' = r}} \sum_{\substack{b' \in \mathcal{P}(m', k'), \\ |b'| = d'-r'}} \sum_{\substack{b'' \in \mathcal{P}(m'', k''), \\ |b''| = d''-r''}} \lambda_{i'}^{b'} \lambda_{i''}^{b''} \cdot \langle c \ell^{\ast} \rangle(b', b''),
\end{equation}
where we note that $l = d'+d''-k'm''$ with $d'' := \operatorname{dim}_{\mathbb{C}}(X'') = |a''|$ because we have $|a'| + |a''| = k'm''$ by definition of $a''$ (see \Cref{figure complementary partition}).
Hence, to show (\ref{equation main result recuvrsive coefficient formula}), it remains to show the equality
$$
\lambda^{a'}_{i'} = \sum_{\substack{b' \in \mathcal{P}(m', k'), \\ |b'| = |a'|}} \sum_{\substack{b'' \in \mathcal{P}(m'', k''), \\ |b''| = |a''|}} \lambda_{i'}^{b'} \lambda_{i''}^{b''} \cdot \langle c \ell^{\ast} \rangle(b', b''),
$$
whose right hand side equals the summand indexed by $(r, r', r'') = (l, l, 0)$ on the right hand side of (\ref{equation signature well defined used in proof of main theorem}).

By definition of the Gysin coherent characteristic class $c\ell$, we have $c \ell_{2d''}(X'') = [X'']_{G''}$ in $H_{2d''}(G''; \mathbb{Q})$.
In other words, we have $\lambda_{i''}^{b''} = \delta_{a''b''}$ for all $b'' \in \mathcal{P}(m'', k'')$ with $|b''| = |a''|$.
Furthermore, by \Cref{theorem normally nonsingular integration}, we have $\langle c \ell^{\ast} \rangle(b', a'') = \delta_{a'b'}$, and the claim follows.

This completes the proof of \Cref{main theorem normally nonsingular expansion}.

\section{Proof of \Cref{main result on Gysin coherent characteristic classes with respect to x}}\label{proof of main theorem}
Let $c \ell$ and $\widetilde{c \ell}$ be Gysin coherent characteristic classes with respect to $\mathcal{X}$ such that $c\ell^{\ast} = \widetilde{c\ell}^{\ast}$ and $|c\ell_{\ast}| = |\widetilde{c\ell}_{\ast}|$ for the associated genera.
We prove the claim by induction on the complex dimension $g = km$ (where $m:=n-k$) of the target Grassmannian $G = G_{k}(\mathbb{C}^{n})$ ($0 \leq k \leq n$) of inclusions $i \colon X \hookrightarrow G$ of irreducible closed algebraic subvarieties $X$.
As for the induction basis $g = 0$, we note that $G = \operatorname{pt}$ is a one point space, and we have $X = \operatorname{pt}$ since irreducibility of $X$ implies that $X \neq \varnothing$.
Hence, we obtain $c \ell_{\ast}(i) = [X]_{G} = [\operatorname{pt}] = \widetilde{c \ell}_{\ast}(i)$ in $H_{\ast}(G; \mathbb{Q}) = \mathbb{Q} \cdot [\operatorname{pt}]$, so that the inclusion $i \colon X \hookrightarrow G$ satisfies $c \ell_{\ast}(i) = \widetilde{c \ell}_{\ast}(i)$.
As for the induction hypothesis, we fix an integer $g > 0$, and assume that
\begin{itemize}
\item[$(\ast)_{g}$] for every inclusion $i \colon X \hookrightarrow G$ in $\mathcal{X}$ of an irreducible closed algebraic subvariety $X$ in a Grassmannian $G = G_{k}(\mathbb{C}^{n})$ of complex dimension $< g$, we have $c \ell_{\ast}(i) = \widetilde{c \ell}_{\ast}(i)$.
\end{itemize}
As for the induction step, we consider the inclusion $i' \colon X' \hookrightarrow G'$ in $\mathcal{X}$ of an irreducible closed $d'$-dimensional algebraic subvariety $X'$ in a $g$-dimensional Grassmannian $G' = G_{k'}(\mathbb{C}^{n'})$ (where we note that $0 < k' < n'$ since $g = k'm' > 0$ with $m' := n'-k'$), and have to show that $c \ell_{\ast}(i') = \widetilde{c \ell}_{\ast}(i')$ in $H_{\ast}(G'; \mathbb{Q})$.
That is, if $\{\lambda^{b}_{i}\}_{i, b}$ and $\{\widetilde{\lambda}^{b}_{i}\}_{i, b}$ denote the families of coefficients associated to $c \ell$ and $\widetilde{c \ell}$, respectively, as introduced in (\ref{equation l class linear combination of schubert cycles}), then we have to show that $\lambda^{b'}_{i'} = \widetilde{\lambda}^{b'}_{i'}$ for all $b' \in \mathcal{P}(m', k')$ with $|b'| \in \{0, \dots, d'\}$, which we shall prove by induction on $l := d' - |b'|$ (see \Cref{figure induction scheme}).
As for the induction basis $l = 0$, we note that $c \ell_{2d'}(i') = [X']_{G'} = \widetilde{c \ell}_{2d'}(i')$ in $H_{2d'}(G'; \mathbb{Q})$.
As for the induction hypothesis, we suppose that $l \in \{1, \dots, d'\}$ is given such that
\begin{itemize}
\item[$(\ast\ast)_{l}$] for every $b' \in \mathcal{P}(m', k')$ with $d'-|b'| \in \{0, \dots, l-1\}$, we have $\lambda^{b'}_{i'} = \widetilde{\lambda}^{b'}_{i'}$.
\end{itemize}

\begin{figure}[htbp]
\centering
\fbox{\begin{tikzpicture}
\draw (0, 0) node {\includegraphics[width=0.6\textwidth]{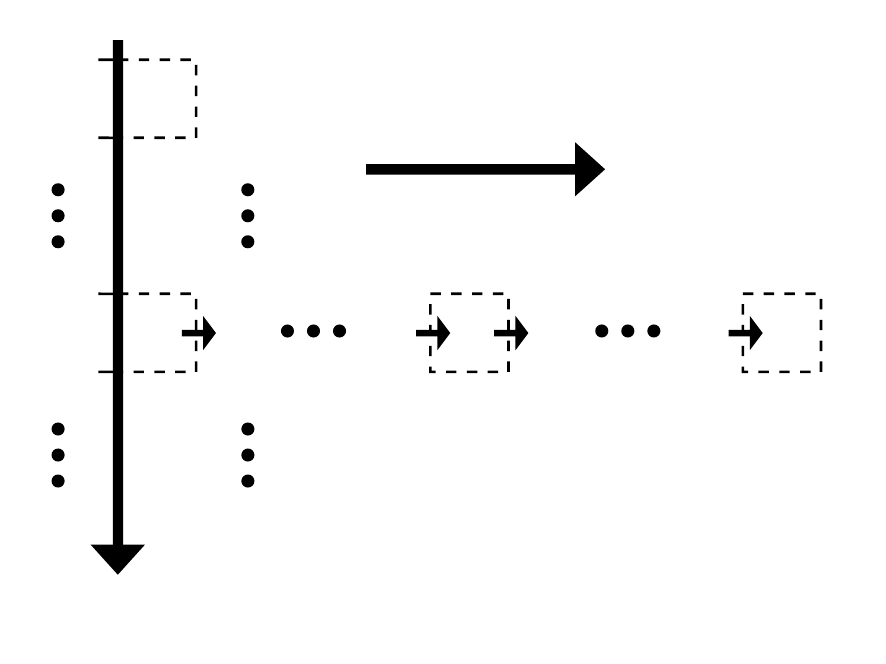}};
\draw (-3.4, 2.05) node {$0$};
\draw (-3.4, 0) node {$g$};
\draw (-2.85, -2.5) node {$g = \operatorname{dim} G$};
\draw (0.25, 0) node {$l$};
\draw (-2.55, 0) node {$0$};
\draw (-2.55, 2.2) node {$0$};
\draw (-2.5, 1.95) node {$= d'$};
\draw (3.05, 0.0) node {$d'$};
\draw (0.25, 1.7) node {$l = d' - |b'|$};
\end{tikzpicture}}
\caption{Induction scheme for the proof of \Cref{main result on Gysin coherent characteristic classes with respect to x}.
The outer induction runs over the dimension $g$ of the ambient Grassmannian $G$, starting with $g = 0$, while the inner induction runs for every fixed embedding $i' \colon X' \rightarrow G' = G_{k'}(\mathbb{C}^{m'+k'})$ in $\mathcal{X}$ with $\operatorname{dim}G' = g$ and $d' := \operatorname{dim}X'$ over the formal codimension $l := d' - |b'|$ of partitions $b' \in \mathcal{P}(m', k')$ in $X'$, starting with $l = 0$.}
\label{figure induction scheme}
\end{figure}

As for the induction step, we fix $a' \in \mathcal{P}(m', k')$ with $d'-|a'| = l$, and have to show that $\lambda^{a'}_{i'} = \widetilde{\lambda}^{a'}_{i'}$.
By applying \Cref{main theorem normally nonsingular expansion} to the inclusion $i' \colon X' \hookrightarrow G'$ in $\mathcal{X}$, we obtain an inclusion $i'' \colon X_{a''}(D_{\ast}'') \hookrightarrow G''$ of a Schubert subvariety $X_{a''}(D_{\ast}'') \subset G''$ into the Grassmannian $G'' = G_{k''}(\mathbb{C}^{m''+k''})$ whose dimension $k''m''$ is strictly smaller than the dimension $g = k'm'$ of $G'$ because we have $0 \leq k'' < k'$ and $0 \leq m'' \leq m'$ by definition of $k''$ and $m''$.
Consequently, we have $\lambda^{b''}_{i''} = \widetilde{\lambda}^{b''}_{i''}$ for all $b'' \in \mathcal{P}(m'', k'')$ by induction hypothesis $(\ast)_{g}$.
Furthermore, we have $\lambda^{b'}_{i'} = \widetilde{\lambda}^{b'}_{i'}$ for all $b' \in \mathcal{P}(m', k')$ with $d'-|b'| \in \{0, \dots, l-1\}$ by induction hypothesis $(\ast \ast)_{l}$.
All in all, we conclude that $\lambda^{a'}_{i'} = \widetilde{\lambda}^{a'}_{i'}$ by comparing the equations (\ref{equation main result recuvrsive coefficient formula}) induced by $c \ell$ and $\widetilde{c \ell}$, respectively.
Here, note that for the genera, we have
$$
|c \ell_{\ast}|(i', i'') = |c \ell_{\ast}|(X \cap X_{c}(F_{\ast}) \hookrightarrow G) = |\widetilde{c \ell}_{\ast}|(X \cap X_{c}(F_{\ast}) \hookrightarrow G) = |\widetilde{c \ell}_{\ast}|(i', i'')
$$
because we have $|c \ell_{\ast}| = |\widetilde{c \ell}_{\ast}|$, and we may use the same collection of characteristic subvarieties by \Cref{remark characteristic varieties}.
For the integrals, we have $\langle c \ell^{\ast} \rangle(b', b'') = \langle \widetilde{c \ell}^{\ast} \rangle(b', b'')$ since $(c \ell^{\ast})^{-1} = (\widetilde{c \ell}^{\ast})^{-1}$, and both of these cohomology classes are integrated over the same triple intersection of Schubert varieties by \Cref{remark integral expression}.

This completes the proof of \Cref{main result on Gysin coherent characteristic classes with respect to x}.

\section{The Goresky-MacPherson $L$-Class}\label{The Goresky-MacPherson L-Class}
Following Siegel \cite{siegel}, an oriented PL pseudomanifold $X$ is called a Witt space if for some (and hence, as shown in \cite[Section 2.4]{gmih2}, for any) PL stratification of $X$, the middle degree, lower middle perversity rational intersection homology of all even-dimensional links vanishes.
For example, any pure-dimensional complex algebraic variety (endowed with the complex topology) is a Witt space.
(In fact, such a space has a natural orientation, and can be PL stratified without strata of odd codimension.)
Closed Witt spaces $X$ have homological $L$-classes $L_{i}(X) \in H_{i}(X; \mathbb{Q})$ that generalize the Poincar\'{e} duals of the cohomological $L$-classes of Hirzebruch \cite{hirzebruch} defined for smooth manifolds.
These classes were first introduced by Goresky and MacPherson \cite{gmih1} for PL pseudomanifolds without strata of odd codimension.
The construction of the Goresky-MacPherson-Siegel $L$-classes is based on a Thom-Pontrjagin type approach that exploits transversality techniques and bordism invariance of the signature $\sigma(X)$ of the Goresky-MacPherson-Siegel intersection form on middle-perversity intersection homology of the Witt space $X$.
Cheeger \cite{cheeger} gave a local formula for $L$-classes in terms of the eta-invariant of simplicial links.
In \cite{banagl-mem} and \cite{banagl-lcl}, the first author extended $L$-classes to oriented stratified pseudomanifolds that allow for Lagrangian sheaves along strata of odd codimension.
For the central role of $L$-classes in the topological classification of singular spaces similar to that of Hirzebruch's $L$-classes in the classification theory of high-dimensional manifolds, we refer to works of Weinberger, Cappell and Shaneson (\cite{csw}, \cite{cw2}, \cite{weinberger}).

The collection of $L$-classes $L_{\ast}(X) = L_{0}(X) + L_{1}(X) + \dots \in H_{\ast}(X; \mathbb{Q})$ with $X$ ranging over all closed Witt spaces is characterized by the following axioms (see e.g. \cite{cs}, and Proposition 8.2.11 in \cite{banagltiss}, and Theorem 9.4.18 in \cite{friedman}):
\begin{enumerate}
\item \emph{(Signature normalization)}
For all $X$, we have $\epsilon_{\ast}L_{\ast}(X) = \sigma(X)$.
\item\label{gysin axiom trivial normal bundle} \emph{(Gysin restriction for normally nonsingular inclusions with trivial normal bundle)}
If $g \colon Y \hookrightarrow X$ is a normally nonsingular inclusion of closed Witt spaces with trivial normal bundle, then 
$$
g^{!}L_{\ast}(X) = L_{\ast}(Y)
$$
under the Gysin homomorphism $g^{!} \colon H_{\ast}(X; \mathbb{Q}) \rightarrow H_{\ast}(Y; \mathbb{Q})$.
\end{enumerate}

For concrete computations in the complex projective algebraic setting, axiom (\ref{gysin axiom trivial normal bundle}) is often not applicable due to the failure of triviality of normal bundles.
For example, normal bundles of normally nonsingular embeddings that arise from transverse intersections of singular projective varieties with smooth varieties are frequently nontrivial.
By using the machinery of Banagl-Laures-McClure \cite{blm}, the first author established the following Gysin restriction formula for the Goresky-MacPherson-Siegel $L$-class for arbitrary normally nonsingular inclusions of even-dimensional Witt spaces.

\begin{thm}[see Theorem 3.18 in \cite{banagllgysin}]\label{thm banagl topological gysin restriction}
Let $g \colon Y \hookrightarrow X$ be a normally nonsingular inclusion of closed oriented even-dimensional PL Witt pseudomanifolds.
Let $\nu$ be the topological normal bundle of $g$.
Then
$$
g^{!}L_{\ast}(X) = L^{\ast}(\nu) \cap L_{\ast}(Y).
$$
\end{thm}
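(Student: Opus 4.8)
The plan is to lift the asserted identity to the symmetric $L$-theory spectrum $\syml$ (four-periodic, so that after rationalization $\syml$ becomes a product of rational Eilenberg--MacLane spectra), on which the Goresky--MacPherson class appears as the rationalization of a genuine $\syml$-homology fundamental class. Concretely, by the Banagl--Laures--McClure machinery a closed oriented even-dimensional PL Witt pseudomanifold $X^{n}$ carries a canonical fundamental class $[X]_{\syml}\in H_{n}(X;\syml)$ in $\syml$-homology, and under the $L$-character map $c\colon H_{\ast}(-;\syml)\to\bigoplus_{j\geq 0}H_{\ast-4j}(-;\rat)$ induced by the rationalization map of ring spectra one has $c([X]_{\syml})=L_{\ast}(X)$ (with the standard normalization), and likewise $c([Y]_{\syml})=L_{\ast}(Y)$. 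First I would recall these facts from \cite{blm} and the surrounding literature, together with the fact that $c$ is multiplicative and compatible with all cap products in sight (the even-dimensionality hypothesis is inherited from this framework).

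Next I would set up the $\syml$-theoretic Gysin map. Since $\nu$ is an oriented topological $\real^{r}$-bundle, it carries a canonical $\syml$-Thom class $u_{\syml}\in\widetilde{H}^{r}(\Th(\nu);\syml)$, hence an $\syml$-Thom isomorphism and an associated Gysin homomorphism $g^{!}_{\syml}\colon H_{\ast}(X;\syml)\to H_{\ast-r}(Y;\syml)$ defined verbatim as in the ordinary case, by the collapse $X\to\Th(\nu)$ onto a tubular neighbourhood of $Y$, excision, the Thom isomorphism, and the bundle projection. The arithmetic input is the standard rationalization of this orientation, $c(u_{\syml})=u_{H}\cup\nu^{\ast}\bigl(L^{\ast}(\nu)^{-1}\bigr)$ with $u_{H}$ the ordinary rational Thom class; this is exactly the discrepancy which, for the stable normal bundle of a smooth manifold $M$, forces $c([M]_{\syml})=L^{\ast}(TM)\cap[M]=L_{\ast}(M)$, and it fixes the sign/inverse here. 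Since the collapse, excision, the Thom isomorphism and the bundle projection are natural transformations of homology theories and $c$ comes from a ring map of spectra, applying $c$ to the definition of $g^{!}_{\syml}$ gives the compatibility
\[
c\bigl(g^{!}_{\syml}(z)\bigr)=L^{\ast}(\nu)^{-1}\cap g^{!}\bigl(c(z)\bigr),\qquad z\in H_{\ast}(X;\syml),
\]
where $g^{!}$ on the right denotes the rational Gysin map of the theorem.

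The geometric core, and the step I expect to be the main obstacle, is the naturality of the $\syml$-homology fundamental class along the normally nonsingular inclusion:
\[
g^{!}_{\syml}\bigl([X]_{\syml}\bigr)=[Y]_{\syml}\qquad\in H_{n-r}(Y;\syml).
\]
The heuristic is that a normally nonsingular inclusion exhibits $Y$ as a regular preimage: near $Y$ the space $X$ is identified with a neighbourhood of the zero section of $\nu$, so in the Pontrjagin--Thom description of $[X]_{\syml}$ from \cite{blm} the collapse defining $[X]_{\syml}$, post-composed with the collapse $X\to\Th(\nu)$, recovers the collapse defining $[Y]_{\syml}$. Making this precise means re-running the $\syml$-fundamental-class construction of \cite{blm} and verifying that it is functorial under this restriction -- a base-change statement for $\syml$-fundamental classes along oriented normally nonsingular inclusions, the $\syml$-theoretic analogue of \Cref{proposition gysin compatible with inclusions} and \Cref{lem.gysinfund}. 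For a trivial normal bundle this specializes to the Gysin axiom characterizing $L_{\ast}$, which is a helpful internal consistency check.

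Granting these three ingredients, the conclusion is immediate: applying $c$ to $g^{!}_{\syml}[X]_{\syml}=[Y]_{\syml}$ and substituting gives
\[
L_{\ast}(Y)=c\bigl([Y]_{\syml}\bigr)=c\bigl(g^{!}_{\syml}[X]_{\syml}\bigr)=L^{\ast}(\nu)^{-1}\cap g^{!}\bigl(c([X]_{\syml})\bigr)=L^{\ast}(\nu)^{-1}\cap g^{!}\bigl(L_{\ast}(X)\bigr),
\]
and capping with the invertible class $L^{\ast}(\nu)$ yields $g^{!}L_{\ast}(X)=L^{\ast}(\nu)\cap L_{\ast}(Y)$. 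Thus the real work is concentrated in the naturality statement $g^{!}_{\syml}[X]_{\syml}=[Y]_{\syml}$, which is where the \cite{blm} machinery is genuinely needed; the rationalized Thom-class formula is classical but must be recorded with the correct inverse, and $c([X]_{\syml})=L_{\ast}(X)$ is a citation.
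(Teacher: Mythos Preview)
The paper does not supply its own proof of this theorem; it is quoted verbatim from \cite{banagllgysin} (Theorem~3.18 there), with only the remark that it was established ``by using the machinery of Banagl--Laures--McClure \cite{blm}''. Your proposal is precisely a sketch of that machinery in action: lift to $\syml$-homology via the BLM fundamental class, set up the $\syml$-Gysin map through the $\syml$-Thom class, prove the spectrum-level identity $g^{!}_{\syml}[X]_{\syml}=[Y]_{\syml}$, and then rationalize via the character map, picking up the $L^{\ast}(\nu)$ correction from the comparison of Thom classes. This matches the approach indicated by the paper, and you have correctly isolated the substantive step as the naturality of the $\syml$-fundamental class under normally nonsingular Gysin restriction.
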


A different axiomatization involving codimension $0$ restrictions was used by Matsui in \cite[p. 61]{mats} to investigate ambient intersection formulae for Goresky-MacPherson $L$-classes.
In \cite{banaglcovertransfer}, the first author analyzed the behavior of $L$-classes under Gysin transfers associated to finite degree covers.

The Goresky-MacPherson-Siegel $L$-class of complex projective algebraic varieties fits into the framework of Gysin coherent characteristic classes (\Cref{definition gysin coherent characteristic classes}), as we shall detail next.

\begin{thm}\label{proposition l class is l type characteristic class}
The pair $\mathcal{L} = (\mathcal{L}^{\ast}, \mathcal{L}_{\ast})$ defined by $\mathcal{L}^{\ast}(f) = L^{\ast}(\nu_{f})$ for every inclusion $f \colon M \rightarrow W$ of a smooth closed subvariety $M \subset W$ in a smooth variety $W$ with normal bundle $\nu_{f}$, and by $\mathcal{L}_{\ast}(i) = i_{\ast} L_{\ast}(X)$ for every inclusion $i \colon X \rightarrow W$ of a compact possibly singular subvariety $X \subset W$ in a smooth variety $W$ is a Gysin coherent characteristic class.
\end{thm}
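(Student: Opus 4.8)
The plan is to verify the four axioms of \Cref{definition gysin coherent characteristic classes} in turn, taking $\mathcal{X}$ to be the family of all inclusions of compact irreducible subvarieties in smooth varieties and $\mathcal{X}$-transversality to mean simultaneous Whitney and generic transversality, as in the Example following the axioms. First I would dispatch the normalization requirements. Hirzebruch's cohomological class satisfies $L^{0}(\nu_{f}) = 1$, so $\mathcal{L}^{0}(f) = 1$. A compact pure-dimensional complex algebraic variety $X$ of complex dimension $d$ is a closed oriented even-dimensional PL Witt pseudomanifold (as recalled in \Cref{The Goresky-MacPherson L-Class}); hence $L_{\ast}(X) \in H_{\ast}(X;\mathbb{Q})$ is concentrated in degrees $\le 2d$, and its top component is the fundamental class $L_{2d}(X) = [X]_{X}$ by the normalization of the Goresky-MacPherson $L$-class. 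Pushing forward along $i$ gives $\mathcal{L}_{2d}(i) = [X]_{W}$ and no components above degree $2d$, as required.

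Axioms (\ref{axiom multiplicativity}), (\ref{axiom isomorphism}), and (\ref{axiom locality}) will each follow from a standard property of $L_{\ast}$ or $L^{\ast}$. Multiplicativity of the Goresky-MacPherson-Siegel $L$-class under products of closed Witt spaces, together with the compatibility of the homological cross product with pushforward, yields $\mathcal{L}_{\ast}(i \times i') = (i \times i')_{\ast} L_{\ast}(X \times X') = (i_{\ast} L_{\ast}(X)) \times (i'_{\ast} L_{\ast}(X')) = \mathcal{L}_{\ast}(i) \times \mathcal{L}_{\ast}(i')$. For isomorphism invariance, on the cohomological side one uses naturality of $L^{\ast}$ under bundle pullback applied to the canonical identification $\phi^{\ast}\nu_{f'} = \nu_{f}$, and on the homological side one uses the (PL, hence topological) invariance of the Goresky-MacPherson $L$-class under the homeomorphism $\Phi|_{X} \colon X \xrightarrow{\cong} X'$, giving $\Phi_{\ast}\mathcal{L}_{\ast}(i) = i'_{\ast}(\Phi|_{X})_{\ast}L_{\ast}(X) = i'_{\ast}L_{\ast}(X') = \mathcal{L}_{\ast}(i')$. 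Finally, naturality under pushforward is immediate from functoriality: writing $i = f \circ i^{M}$ gives $f_{\ast}\mathcal{L}_{\ast}(i^{M}) = f_{\ast}i^{M}_{\ast}L_{\ast}(X) = i_{\ast}L_{\ast}(X) = \mathcal{L}_{\ast}(i)$.

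The substantive point is the Gysin restriction axiom (\ref{axiom gysin}). Given $i \colon X \hookrightarrow W$ in $\mathcal{X}$ and a smooth closed irreducible subvariety $f \colon M \hookrightarrow W$ that is Whitney transverse and generically transverse to $X$, put $Y = M \cap X$. By \Cref{prop codimension of transverse intersection} this is a compact pure-dimensional subvariety, hence a closed oriented even-dimensional PL Witt pseudomanifold; by \Cref{theorem normally nonsingular inclusions induced by transverse intersections} the inclusion $g \colon Y \hookrightarrow X$ is oriented normally nonsingular with normal bundle $\nu_{g} = \nu_{M \subset W}|_{Y} = j^{\ast}\nu_{f}$, where $j \colon Y \hookrightarrow M$ denotes the inclusion. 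The square with corners $Y, M, X, W$ is cartesian, so after identifying the topological Gysin map of the smooth closed submanifold $M \subset W$ with its normally nonsingular Gysin map via a tubular neighborhood, \Cref{proposition gysin compatible with inclusions} gives $f^{!}i_{\ast} = j_{\ast}g^{!}$. Hence
\[
f^{!}\mathcal{L}_{\ast}(i) = f^{!}i_{\ast}L_{\ast}(X) = j_{\ast}g^{!}L_{\ast}(X).
\]
Now \Cref{thm banagl topological gysin restriction} applies to the normally nonsingular inclusion $g$ of even-dimensional closed PL Witt pseudomanifolds and yields $g^{!}L_{\ast}(X) = L^{\ast}(\nu_{g}) \cap L_{\ast}(Y) = j^{\ast}L^{\ast}(\nu_{f}) \cap L_{\ast}(Y)$ by naturality of $L^{\ast}$. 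Applying $j_{\ast}$ and the projection formula $j_{\ast}(j^{\ast}\xi \cap x) = \xi \cap j_{\ast}x$ gives
\[
f^{!}\mathcal{L}_{\ast}(i) = L^{\ast}(\nu_{f}) \cap j_{\ast}L_{\ast}(Y) = \mathcal{L}^{\ast}(f) \cap \mathcal{L}_{\ast}(j),
\]
which is precisely axiom (\ref{axiom gysin}).

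I expect the main obstacle to be not conceptual but a matter of careful bookkeeping around Gysin maps and orientations: one must check that the ``topological Gysin map'' attached in \Cref{definition gysin coherent characteristic classes} to the smooth closed submanifold $M \subset W$ agrees with the normally nonsingular Gysin maps of \Cref{theorem normally nonsingular inclusions induced by transverse intersections} and \Cref{proposition gysin compatible with inclusions}, as well as with the one in \Cref{thm banagl topological gysin restriction}, with compatible orientations coming from the complex structures; and one must make sure that the algebraic intersection varieties $Y = M \cap X$ genuinely meet the hypotheses (closed, oriented, even-dimensional, PL, Witt) of \Cref{thm banagl topological gysin restriction}. The multiplicativity and topological invariance of the Goresky-MacPherson-Siegel $L$-class invoked for axioms (\ref{axiom multiplicativity}) and (\ref{axiom isomorphism}) should be quoted from the literature (Siegel \cite{siegel}, and the books of Banagl \cite{banagltiss} and Friedman \cite{friedman}).
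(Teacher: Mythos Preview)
Your proposal is correct and follows essentially the same route as the paper's proof: the same choice of $\mathcal{X}$ and $\mathcal{X}$-transversality, the same verification of normalization and of axioms (\ref{axiom multiplicativity})--(\ref{axiom locality}) via multiplicativity, naturality of $L^{\ast}$ under pullback, invariance of $L_{\ast}$ under the induced isomorphism, and functoriality of pushforward, and the same derivation of axiom (\ref{axiom gysin}) from \Cref{theorem normally nonsingular inclusions induced by transverse intersections}, base change (\Cref{proposition gysin compatible with inclusions}), \Cref{thm banagl topological gysin restriction}, and the projection formula. The only cosmetic differences are that the paper cites Woolf \cite{woolf} (alternatively \cite{blm}) for multiplicativity of $L_{\ast}$ and, for isomorphism invariance of $\mathcal{L}_{\ast}$, argues directly from the Whitney-stratified construction of the $L$-class in \cite[Section 5.3]{gmih1} using that $\Phi|_{X}$ is the restriction of an ambient diffeomorphism, rather than invoking abstract topological invariance.
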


\begin{proof}
(Here, we take $\mathcal{X}$ to be the family of all inclusions $i \colon X \rightarrow W$ of compact subvarieties $X$ in smooth varieties $W$ such that $X$ is irreducible.)
By the properties of the cohomological Hirzebruch $L$-class, the class $\mathcal{L}^{\ast}(f) = L^{\ast}(\nu_{f}) \in H^{\ast}(M; \mathbb{Q})$ is normalized for all $f$.
Moreover, the highest nontrivial homogeneous component of the Goresky-MacPherson $L$-class $L_{\ast}(X) = L_{0}(X) + L_{1}(X) + \dots \in H_{\ast}(X; \mathbb{Q})$ is known to be $L_{2d}(X) = [X]_{X}$, where $d$ denotes the complex dimension of $X$.
Consequently, the highest nontrivial homogeneous component of $\mathcal{L}_{\ast}(i) = i_{\ast} L_{\ast}(X) \in H_{\ast}(W; \mathbb{Q})$ is the ambient fundamental class $i_{\ast}L_{2d}(X) = i_{\ast}[X]_{X} = [X]_{W}$.
We proceed to check the axioms of Gysin coherent characteristic classes for the pair $\mathcal{L}$.
As for axiom (\ref{axiom multiplicativity}), we have $L_{\ast}(X \times X') = L_{\ast}(X) \times L_{\ast}(X')$ in $H_{\ast}(X \times X'; \mathbb{Q})$ for all Witt spaces $X$ and $X'$ by a result of Woolf \cite[Proposition 5.16]{woolf}.
(Alternatively, multiplicativity of the Goreksy-MacPherson $L$-class under products follows from results in \cite{blm}, especially Section 11 there.)
Hence, for every $i \colon X \rightarrow W$ and $i' \colon X' \rightarrow W'$, the claim follows by applying $(i \times i')_{\ast}$ and using naturality of the cross product:
\begin{align*}
\mathcal{L}_{\ast}(i \times i') &= (i \times i')_{\ast}L_{\ast}(X \times X') = (i \times i')_{\ast}(L_{\ast}(X) \times L_{\ast}(X')) \\
&= i_{\ast}L_{\ast}(X) \times i_{\ast}'L_{\ast}(X') = \mathcal{L}_{\ast}(i) \times \mathcal{L}_{\ast}(i').
\end{align*}
Next, let us show that the pair $\mathcal{L}$ is compatible with ambient isomorphisms as stated in axiom (\ref{axiom isomorphism}).
As for $\mathcal{L}^{\ast}$, we consider $f \colon M \rightarrow W$ and $f' \colon M' \rightarrow W'$, and an isomorphism $W \stackrel{\cong}{\longrightarrow} W'$ that restricts to an isomorphism $\phi \colon M \stackrel{\cong}{\longrightarrow} M'$.
Then, we have $\phi^{\ast}\nu_{f'} = \nu_{f}$, and thus
$$
\phi^{\ast}\mathcal{L}^{\ast}(f') = \phi^{\ast}L^{\ast}(\nu_{f'}) = L^{\ast}(\phi^{\ast}\nu_{f'}) = L^{\ast}(\nu_{f}) = \mathcal{L}^{\ast}(f).
$$
As for $\mathcal{L}_{\ast}$, we consider $i \colon X \rightarrow W$ and $i' \colon X' \rightarrow W'$, and an isomorphism $\Phi \colon W \stackrel{\cong}{\longrightarrow} W'$ that restricts to an isomorphism $X \stackrel{\cong}{\longrightarrow} X'$.
Since $W'$ is smooth and quasiprojective, it follows that the compact subvariety $X' \subset W'$ can be Whitney stratified with only even-codimensional strata.
We equip $X \subset W$ with the Whitney stratification induced from that of $X' \subset W'$ by the isomorphism $\Phi$.
Let $\Phi_{0} \colon X \stackrel{\cong}{\longrightarrow} X'$ denote the restriction of $\Phi$.
As the isomorphism $\Phi_{0}$ is the restriction of an ambient diffeomorphism underlying $\Phi$, it follows directly from the construction of the Goresky-MacPherson $L$-class of Whitney stratified pseudomanifolds with only even-codimensional strata (see \cite[Section 5.3]{gmih1}) that $\Phi_{0 \ast}L_{\ast}(X) = L_{\ast}(X')$.
Hence, we obtain
$$
\Phi_{\ast}\mathcal{L}_{\ast}(i) = \Phi_{\ast}i_{\ast}L_{\ast}(X) = i_{\ast}'\Phi_{0\ast}L_{\ast}(X) = i_{\ast}'L_{\ast}(X') = \mathcal{L}_{\ast}(i').
$$
To verify axiom (\ref{axiom locality}), we consider $i \colon X \rightarrow W$ and $f \colon M \rightarrow W$ such that $X \subset M$.
Then, the inclusion $i^{M} := i| \colon X \rightarrow M$ satisfies $f \circ i^{M} = i$, and we obtain
$$
f_{\ast}\mathcal{L}_{\ast}(i^{M}) = f_{\ast}i^{M}_{\ast}L_{\ast}(X) = i_{\ast}L_{\ast}(X) = \mathcal{L}_{\ast}(i).
$$
Finally, to show axiom (\ref{axiom gysin}), let us call closed irreducible subvarieties $Z, Z' \subset W$ of a smooth variety $W$ $\mathcal{X}$-transverse if $Z$ and $Z'$ are simultaneously Whitney transverse and generically transverse in $W$.
This notion of $\mathcal{X}$-transversality has indeed all required properties.
(In fact, properness of $\mathcal{X}$-transverse intersections holds by \Cref{prop codimension of transverse intersection}.
Moreover, Kleiman's transversality theorem for the action of $GL_{n}(\mathbb{C})$ on the Grassmannians $G = G_{k}(\mathbb{C}^{n})$ holds for Whitney transversality by \Cref{thm kleiman for whitney transversality}, and for generic transversality by \Cref{thm.kleiman}.
Here, we also use that Zariski dense open subsets are also dense in the complex topology by \cite[Theorem 1, p. 58]{mumfordred}.
Finally, locality clearly holds for our notion of $\mathcal{X}$-transversality.)
Now, consider an inclusion $i \colon X \rightarrow W$ in $\mathcal{X}$ and an inclusion $f \colon M \rightarrow W$ such that $M$ is irreducible, and $M$ and $X$ are $\mathcal{X}$-transverse in $W$.
Let $j \colon Y \rightarrow M$ and $g \colon Y \rightarrow X$ denote the inclusions of the pure-dimensional compact subvariety $Y := M \cap X$.
Then, the inclusion $g \colon Y \hookrightarrow X$ is normally nonsingular, with topological normal bundle $\nu = j^{\ast} \nu_{f}$ given by the restriction of the normal bundle $\nu_{f}$ of $M$ in $W$ (see \Cref{theorem normally nonsingular inclusions induced by transverse intersections}).
According to the first author's Gysin restriction formula for the Goresky-MacPherson $L$-class (see \Cref{thm banagl topological gysin restriction}), we have
\begin{equation*}
g^{!}L_{\ast}(X) = L^{\ast}(\nu) \cap L_{\ast}(Y).
\end{equation*}
Using that $f^{!}i_{\ast} = j_{\ast}g^{!}$ by \Cref{proposition gysin compatible with inclusions}, as well as $L^{\ast}(\nu) = L^{\ast}(j^{\ast} \nu_{f}) = j^{\ast} L^{\ast}(\nu_{f})$, we conclude that
\begin{align*}
f^{!} \mathcal{L}_{\ast}(i) &= f^{!} i_{\ast}L_{\ast}(X) = j_{\ast}g^{!} L_{\ast}(X) = j_{\ast}(L^{\ast}(\nu) \cap L_{\ast}(Y)) \\
&= j_{\ast}(j^{\ast} L^{\ast}(\nu_{f}) \cap L_{\ast}(Y)) = L^{\ast}(\nu_{f}) \cap j_{\ast}L_{\ast}(Y) = \mathcal{L}^{\ast}(f) \cap \mathcal{L}_{\ast}(j).
\end{align*}

This completes the proof of \Cref{proposition l class is l type characteristic class}.
\end{proof}

\section{An Example: The $L$-Class of $X_{3, 2, 1}$}
\label{l class of x 3 2 1}

We set $X = X_{3, 2, 1}$.
Note that $X$ is a singular Schubert variety of real dimension $12$ that does not satisfy global Poincar\'{e} duality over the rationals since, for example,
$$
H_{8}(X; \mathbb{Q}) = \mathbb{Q}[X_{3, 1}]_{X} \oplus \mathbb{Q} [X_{2, 2}]_{X} \oplus \mathbb{Q} [X_{2, 1, 1}]_{X},
$$
is a $3$-dimensional rational vector space, whereas
$$
H_{4}(X; \mathbb{Q}) = \mathbb{Q}[X_{2}]_{X} \oplus \mathbb{Q} [X_{1, 1}]_{X},
$$
has dimension $2$.
Let us compute the total Goresky-MacPherson $L$-class
$$
L_{\ast}(X_{3, 2, 1}) = L_{12}(X) + L_{8}(X) + L_{4}(X) + L_{0}(X) \in H_{\ast}(X_{3, 2, 1}; \mathbb{Q})
$$
with $L_{j}(X) \in H_{j}(X; \mathbb{Q})$.
The highest class $L_{12}(X) = [X]_{X}$ is the fundamental class, and $L_{0}(X) = \sigma(X) \cdot [\operatorname{pt}]_{X}$ is determined by the signature.
In the following, we are concerned with the computation of the $L$-classes $L_{8}(X) \in H_{8}(X; \mathbb{Q})$ and $L_{4}(X) \in H_{4}(X; \mathbb{Q})$, which can be uniquely written as rational linear combinations
\begin{equation}\label{equation codimension 4 example}
L_{8}(X) = \lambda_{3, 1} [X_{3, 1}]_{X} + \lambda_{2, 2} [X_{2, 2}]_{X} + \lambda_{2, 1, 1} [X_{2, 1, 1}]_{X}, \qquad  \lambda_{3, 1},  \lambda_{2, 2},  \lambda_{2, 1, 1} \in \mathbb{Q},
\end{equation}
and
\begin{equation}\label{equation codimension 8 example}
L_{4}(X) = \lambda_{2} [X_{2}]_{X} + \lambda_{1, 1} [X_{1, 1}]_{X}, \qquad \lambda_{2}, \lambda_{1, 1} \in \mathbb{Q},
\end{equation}
respectively.
Note that our computation of $L_{4}(X)$ goes beyond the scope of the computations in \cite[Section 4]{banagllgysin}, which are limited to $L$-classes in real codimension $4$.
For our purpose, we will consider $X = X_{3, 2, 1}(F_{\ast})$ as a Schubert subvariety of the Grassmannian $G = G_{3}(\mathbb{C}^{6})$ with respect to some complete flag $F_{\ast}$ on $\mathbb{C}^{6}$.

Similarly to the normally nonsingular expansion of $L_{6}(X_{3, 2})$ in \cite[Section 4]{banagllgysin}, our method is to produce equations for the unknown coefficients in (\ref{equation codimension 4 example}) and (\ref{equation codimension 8 example}) as follows.
By intersecting $X \subset G$ transversely with a nonsingular subvariety $M \subset G$ with topological normal bundle $\nu_{M}$, we obtain an oriented normally nonsingular inclusion
\begin{equation}\label{equation example inclusion map}
g \colon Y = M \cap X \hookrightarrow X
\end{equation}
with normal bundle $\nu_{Y} = \nu_{M}|_{Y}$.
We consider the associated Gysin homomorphism
\begin{equation}\label{equation example gysin map}
g^{!} \colon H_{\ast}(X; \mathbb{Q}) \rightarrow H_{\ast-2c}(Y; \mathbb{Q}),
\end{equation}
where $c$ denotes the complex codimension of $M$ in $G$.
Then, we compute the values of $g^{!}$ on the Schubert generators of $H_{\ast}(X; \mathbb{Q})$ by using intersection theory of Schubert cycles.
Provided that $L_{\ast}(Y)$ is known, the Gysin restriction formula
\begin{equation}\label{equation example gysin formula}
g^{!}L_{\ast}(X) = L^{\ast}(\nu_{Y}) \cap L_{\ast}(Y)
\end{equation}
of \Cref{thm banagl topological gysin restriction} then yields equations in the unknown coefficients in (\ref{equation codimension 4 example}) and (\ref{equation codimension 8 example}).
Finally, for appropriate choices of nonsingular subvarieties $M \subset G$, we are able to derive normally nonsingular expansions for these coefficients.

Compared to the calculations in \cite[Section 4]{banagllgysin}, our method is more general in the following two ways.
First, we will choose $M$ to be the regular part of a possibly singular Schubert subvariety of $G$, thus allowing noncompact $M$ for which the intersection $Y = M \cap X$ is still compact, so that $g^{!}$ and $L_{\ast}(Y)$ are still defined.
Second, the Segre product of Grassmannians in an ambient Grassmannian is a new ingredient that allows us to identify $Y$ in some cases with a product of Schubert varieties with known $L$-classes.
\\

Let us compute $\lambda_{3, 1}$.
Our computation is similar to that of the coefficient $\mu$ of $L_{6}(X_{3, 2})$ in \cite[Section 4]{banagllgysin}.
We choose a direct sum decomposition $\mathbb{C}^{6} = V' \oplus V''$ with $\operatorname{dim}_{\mathbb{C}}(V') = 5$ and $\operatorname{dim}_{\mathbb{C}}(V'') = 1$.
Let $(E_{\ast}', F_{\ast}')$ be a pair of transverse flags on $V'$, and let $(E_{\ast}'', F_{\ast}'')$ be a pair of transverse flags on $V''$.
Then, $(E_{\ast}, F_{\ast}) = (E_{\ast}' \oplus E_{\ast}'', F_{\ast}'' \oplus F_{\ast}')$ is a pair of transverse flags on $\mathbb{C}^{6}$.
Hence, by \Cref{proposition transverse flags imply whitney transverse}, $M = X_{3, 3}(E_{\ast})$ and $X = X_{3, 2, 1}(F_{\ast})$ are Whitney transverse in $G$ with respect to suitable Whitney stratifications.
By a result of Lakshmibai-Weyman (see \Cref{thm.lakwey}), the Schubert subvariety $M = X_{3, 3}(E_{\ast}) \subset G$ is nonsingular.
In order to compute the transverse intersection $Y = M \cap X$, we consider the Segre product
$$
S \colon G_{2}(V') \times G_{1}(V'') \hookrightarrow G_{3}(\mathbb{C}^{6}).
$$
Writing $(3, 3, 0) \in \mathcal{P}(3, 3)$ as $(3, 3, 0) = (0) \sqcup (3, 3)$ with $(0) \in \mathcal{P}(0, 1)$ and $(3, 3) \in \mathcal{P}(3, 2)$, and $(3, 2, 1) \in \mathcal{P}(3, 3)$ as $(3, 2, 1) = (2, 1) \sqcup (0)$ with $(2, 1) \in \mathcal{P}(3, 2)$ and $(0) \in \mathcal{P}(0, 1)$, we conclude from \Cref{proposition segre product of schubert varieties} that
\begin{align*}
Y &= M \cap X \\
&= X_{3, 3}(E_{\ast}) \cap X_{3, 2, 1}(F_{\ast}) \\
&= X_{(0) \sqcup (3, 3)}(E_{\ast}' \oplus E_{\ast}'') \cap X_{(2, 1) \sqcup (0)}(F_{\ast}'' \oplus F_{\ast}') \\
&= S((X_{0}(E_{\ast}') \cap X_{0}(F_{\ast}')) \times (X_{3, 3}(E_{\ast}'') \cap X_{2, 1}(F_{\ast}''))) \\
&= S((X_{0}(E_{\ast}') \cap X_{0}(F_{\ast}')) \times (X_{2, 1}(F_{\ast}'') \cap X_{3, 3}(E_{\ast}''))) \\
&= X_{(0) \sqcup (2, 1)}(E_{\ast}' \oplus F_{\ast}'') \cap X_{(3, 3) \sqcup (0)}(E_{\ast}'' \oplus F_{\ast}') \\
&= X_{2, 1}(E_{\ast}' \oplus F_{\ast}'') \cap X_{3, 3, 3}(E_{\ast}'' \oplus F_{\ast}') \\
&= X_{2, 1}(E_{\ast}' \oplus F_{\ast}'').
\end{align*}
We set $X_{2, 1}' := X_{2, 1}(E_{\ast}' \oplus F_{\ast}'')$ and $X_{1}' := X_{1}(E_{\ast}' \oplus F_{\ast}'')$.
The Gysin homomorphism $g^{!} \colon H_{8}(X; \mathbb{Q}) \rightarrow H_{2}(Y; \mathbb{Q})$ associated to the normally nonsingular inclusion $g \colon Y \hookrightarrow X$ is of the form
$$
g^{!} \colon \mathbb{Q}[X_{3, 1}]_{X} \oplus \mathbb{Q} [X_{2, 2}]_{X} \oplus \mathbb{Q} [X_{2, 1, 1}]_{X} \rightarrow \mathbb{Q}[X_{1}']_{Y}.
$$
Since $(E_{\ast}, F_{\ast})$ is a pair of transverse flags on $\mathbb{C}^{6}$, it follows from \Cref{proposition transverse flags imply whitney transverse} that $M \subset G$ is Whitney transverse to $X_{3, 1}$, $X_{2, 2}$, and $X_{2, 1, 1}$ (all defined with respect to the flag $F_{\ast}$) with respect to suitable Whitney stratifications.
Hence, using \Cref{lem.gysinfund}, we have $g^{!}[X_{3, 1}]_{X} = [M \cap X_{3, 1}]_{Y}$, $g^{!}[X_{2, 2}]_{X} = [M \cap X_{2, 2}]_{Y}$, and $g^{!}[X_{2, 1, 1}]_{X} = [M \cap X_{2, 1, 1}]_{Y}$.
By a computation similar to the above, \Cref{proposition segre product of schubert varieties} implies that $M \cap X_{3, 1} = X_{1}'$.
Moreover, we have $M \cap X_{2, 2} = \varnothing$ and $M \cap X_{2, 1, 1} = \varnothing$ by \Cref{proposition empty intersection of schubert varieties}.
All in all, we obtain
$$
g^{!}L_{8}(X) = \lambda_{3, 1} \cdot [X_{1}']_{Y}.
$$
On the other hand, (\ref{equation example gysin formula}) yields
\begin{align*}
g^{!}L_{8}(X) &= ((1 + L^{1}(\nu_{Y}) + L^{2}(\nu_{Y}) + \dots) \cap (L_{6}(Y) + L_{2}(Y)))_{2} \\
&= L_{2}(Y) + L^{1}(\nu_{Y}) \cap [Y]_{Y}.
\end{align*}
The $L$-class $L_{\ast}(Y) = L_{\ast}(X_{2, 1}') = L_{\ast}(X_{2, 1})$ was computed in \cite[Section 4]{banagllgysin} to be
\begin{equation}\label{l class of x 2 1}
L_{\ast}(X_{2, 1}) = L_{6}(X_{2, 1}) + L_{2}(X_{2, 1}) = [X_{2, 1}]_{X_{2, 1}} + \frac{2}{3} [X_{1}]_{X_{2, 1}} \in H_{\ast}(X_{2, 1}; \mathbb{Q}).
\end{equation}
Altogether,
$$
\lambda_{3, 1} \cdot [X_{1}']_{Y} = \frac{2}{3} [X_{1}']_{Y}  + L^{1}(\nu_{Y}) \cap [Y]_{Y} \in H_{2}(Y; \mathbb{Q}) = \mathbb{Q}[X_{1}']_{Y}, \qquad Y = X_{2, 1}'.
$$
The coefficient $\lambda_{2, 1, 1}$ can be computed in a similar way, by taking $M = X_{3, 3}(E_{\ast})$ for a suitable flag $E_{\ast}$ on $\mathbb{C}^{6}$.
It turns out that $\lambda_{3, 1} = \lambda_{2, 1, 1}$.
\\

Let us compute $\lambda_{2, 2}$.
We choose a direct sum decomposition $\mathbb{C}^{6} = V' \oplus V''$ with $\operatorname{dim}_{\mathbb{C}}(V') = 3$ and $\operatorname{dim}_{\mathbb{C}}(V'') = 3$.
Let $(E_{\ast}', F_{\ast}')$ be a pair of transverse flags on $V'$, and let $(E_{\ast}'', F_{\ast}'')$ be a pair of transverse flags on $V''$.
Then, $(E_{\ast}, F_{\ast}) = (E_{\ast}' \oplus E_{\ast}'', F_{\ast}'' \oplus F_{\ast}')$ is a pair of transverse flags on $\mathbb{C}^{6}$.
Hence, by \Cref{proposition transverse flags imply whitney transverse}, $X_{3, 1, 1}(E_{\ast})$ and $X = X_{3, 2, 1}(F_{\ast})$ are Whitney transverse in $G$ with respect to suitable Whitney stratifications.
By a result of Lakshmibai-Weyman (see \Cref{thm.lakwey}), the singular locus of the Schubert subvariety $X_{3, 1, 1}(E_{\ast}) \subset G$ is $X_{0}(E_{\ast}) \subset G$, a single point.
Let $M$ denote the regular part of $X_{3, 1, 1}(E_{\ast})$, which is an open subvariety of $X_{3, 1, 1}(E_{\ast})$.
Note that $M \cap X = X_{3, 1, 1}(E_{\ast}) \cap X$ because $X_{0}(E_{\ast}) \cap X = \varnothing$ by \Cref{proposition empty intersection of schubert varieties}.
In order to compute the transverse intersection $Y = M \cap X$, we consider the Segre product
$$
S \colon G_{1}(V') \times G_{2}(V'') \hookrightarrow G_{3}(\mathbb{C}^{6}).
$$
Writing $(3, 1, 1) \in \mathcal{P}(3, 3)$ as $(3, 1, 1) = (1, 1) \sqcup (2)$ with $(1, 1) \in \mathcal{P}(1, 2)$ and $(2) \in \mathcal{P}(2, 1)$, and $(3, 2, 1) \in \mathcal{P}(3, 3)$ as $(3, 2, 1) = (1) \sqcup (1, 0)$ with $(1) \in \mathcal{P}(2, 1)$ and $(1, 0) \in \mathcal{P}(1, 2)$, we conclude from \Cref{proposition segre product of schubert varieties} that
\begin{align*}
Y &= M \cap X \\
&= X_{3, 1, 1}(E_{\ast}) \cap X_{3, 2, 1}(F_{\ast}) \\
&= X_{(1, 1) \sqcup (2)}(E_{\ast}' \oplus E_{\ast}'') \cap X_{(1) \sqcup (1, 0)}(F_{\ast}'' \oplus F_{\ast}') \\
&= S((X_{1,1}(E_{\ast}') \cap X_{1}(F_{\ast}')) \times (X_{2}(E_{\ast}'') \times X_{1}(F_{\ast}''))) \\
&= S(X_{1}(F_{\ast}') \times X_{1}(F_{\ast}'')) \\
&\cong \mathbb{P}^{1} \times \mathbb{P}^{1}.
\end{align*}
The Gysin homomorphism $g^{!} \colon H_{8}(X; \mathbb{Q}) \rightarrow H_{0}(Y; \mathbb{Q})$ associated to the normally nonsingular inclusion $g \colon Y \hookrightarrow X$ is of the form
$$
g^{!} \colon \mathbb{Q}[X_{3, 1}]_{X} \oplus \mathbb{Q} [X_{2, 2}]_{X} \oplus \mathbb{Q} [X_{2, 1, 1}]_{X} \rightarrow \mathbb{Q}[\operatorname{pt}]_{Y}.
$$
Since $(E_{\ast}, F_{\ast})$ is a pair of transverse flags on $\mathbb{C}^{6}$, it follows from \Cref{proposition transverse flags imply whitney transverse} and \Cref{lemma whitney restratification manifold} that $M$ is transverse to all Whitney strata of $X_{3, 1}$, $X_{2, 2}$, and $X_{2, 1, 1}$ (all defined with respect to the flag $F_{\ast}$) in $G$ with respect to suitable Whitney stratifications.
Hence, using \Cref{lem.gysinfund}, we have $g^{!}[X_{3, 1}]_{X} = [M \cap X_{3, 1}]_{Y}$, $g^{!}[X_{2, 2}]_{X} = [M \cap X_{2, 2}]_{Y}$, and $g^{!}[X_{2, 1, 1}]_{X} = [M \cap X_{2, 1, 1}]_{Y}$.
By \Cref{proposition empty intersection of schubert varieties}, $M \cap X_{2, 2} = \operatorname{pt}$ is a single point, whereas $M \cap X_{3, 1} = \varnothing$ and $M \cap X_{2, 1, 1} = \varnothing$.
All in all, we obtain
$$
g^{!}L_{8}(X) = \lambda_{2, 2} \cdot [\operatorname{pt}]_{Y}.
$$
On the other hand, (\ref{equation example gysin formula}) yields
\begin{align*}
g^{!}L_{8}(X) &= ((1 + L^{1}(\nu_{Y}) + L^{2}(\nu_{Y}) + \dots) \cap (L_{4}(Y) + L_{0}(Y)))_{0} \\
&= L_{0}(Y) + L^{1}(\nu_{Y}) \cap [Y]_{Y}.
\end{align*}
Note that $L_{0}(Y) = \sigma(Y) \cdot [\operatorname{pt}]_{Y} = 0$ since the signature of $Y \cong \mathbb{P}^{1} \times \mathbb{P}^{1}$ vanishes.
Altogether, we obtain
$$
\lambda_{2, 2} = \langle L^{1}(\nu_{Y}), [Y]_{Y}\rangle = \int_{Y} L^{1}(\nu_{M}), \qquad Y = S(X_{1}(F_{\ast}') \times X_{1}(F_{\ast}'')) \subset M = X_{3, 1, 1}(E_{\ast}).
$$
\\

Let us compute $\lambda_{2}$.
We choose a direct sum decomposition $\mathbb{C}^{6} = V' \oplus V''$ with $\operatorname{dim}_{\mathbb{C}}(V') = 2$ and $\operatorname{dim}_{\mathbb{C}}(V'') = 4$.
Let $(E_{\ast}', F_{\ast}')$ be a pair of transverse flags on $V'$, and let $(E_{\ast}'', F_{\ast}'')$ be a pair of transverse flags on $V''$.
Then, $(E_{\ast}, F_{\ast}) = (E_{\ast}' \oplus E_{\ast}'', F_{\ast}'' \oplus F_{\ast}')$ is a pair of transverse flags on $\mathbb{C}^{6}$.
Hence, by \Cref{proposition transverse flags imply whitney transverse}, $X_{3, 3, 1}(E_{\ast})$ and $X = X_{3, 2, 1}(F_{\ast})$ are Whitney transverse in $G$ with respect to suitable Whitney stratifications.
By a result of Lakshmibai-Weyman (see \Cref{thm.lakwey}), the singular locus of the Schubert subvariety $X_{3, 3, 1}(E_{\ast}) \subset G$ is $X_{3}(E_{\ast}) \subset G$.
Let $M$ denote the regular part of $X_{3, 3, 1}(E_{\ast})$, which is an open subvariety of $X_{3, 3, 1}(E_{\ast})$.
Note that $M \cap X = X_{3, 3, 1}(E_{\ast}) \cap X$ because $X_{3}(E_{\ast}) \cap X = \varnothing$ by \Cref{proposition empty intersection of schubert varieties}.
In order to compute the transverse intersection $Y = M \cap X$, we consider the Segre product
$$
S \colon G_{1}(V') \times G_{2}(V'') \hookrightarrow G_{3}(\mathbb{C}^{6}).
$$
Writing $(3, 3, 1) \in \mathcal{P}(3, 3)$ as $(3, 3, 1) = (1) \sqcup (2, 2)$ with $(1) \in \mathcal{P}(1, 1)$ and $(2, 2) \in \mathcal{P}(2, 2)$, and $(3, 2, 1) \in \mathcal{P}(3, 3)$ as $(3, 2, 1) = (2, 1) \sqcup (1)$ with $(2, 1) \in \mathcal{P}(2, 2)$ and $(1) \in \mathcal{P}(1, 1)$, we conclude from \Cref{proposition segre product of schubert varieties} that
\begin{align*}
Y &= M \cap X \\
&= X_{3, 3, 1}(E_{\ast}) \cap X_{3, 2, 1}(F_{\ast}) \\
&= X_{(1) \sqcup (2, 2)}(E_{\ast}' \oplus E_{\ast}'') \cap X_{(2, 1) \sqcup (1)}(F_{\ast}'' \oplus F_{\ast}') \\
&= S((X_{1}(E_{\ast}') \cap X_{1}(F_{\ast}')) \times (X_{2, 2}(E_{\ast}'') \times X_{2, 1}(F_{\ast}''))) \\
&= S(X_{1}(F_{\ast}') \times X_{2, 1}(F_{\ast}'')).
\end{align*}
The Gysin homomorphism $g^{!} \colon H_{4}(X; \mathbb{Q}) \rightarrow H_{0}(Y; \mathbb{Q})$ associated to the normally nonsingular inclusion $g \colon Y \hookrightarrow X$ is of the form
$$
g^{!} \colon \mathbb{Q}[X_{2}]_{X} \oplus \mathbb{Q} [X_{1, 1}]_{X} \rightarrow \mathbb{Q}[\operatorname{pt}]_{Y}.
$$
Since $(E_{\ast}, F_{\ast})$ is a pair of transverse flags on $\mathbb{C}^{6}$, it follows from \Cref{proposition transverse flags imply whitney transverse} and \Cref{lemma whitney restratification manifold} that $M$ is transverse to all Whitney strata of $X_{2}$ and $X_{1, 1}$ (all defined with respect to the flag $F_{\ast}$) in $G$ with respect to suitable Whitney stratifications.
Hence, using \Cref{lem.gysinfund}, we have $g^{!}[X_{2}]_{X} = [M \cap X_{2}]_{Y}$ and $g^{!}[X_{1, 1}]_{X} = [M \cap X_{1, 1}]_{Y}$.
By \Cref{proposition empty intersection of schubert varieties}, $M \cap X_{2} = \operatorname{pt}$ is a single point, whereas $M \cap X_{1, 1} = \varnothing$.
All in all, we obtain
$$
g^{!}L_{4}(X) = \lambda_{2} \cdot [\operatorname{pt}]_{Y}.
$$
On the other hand, (\ref{equation example gysin formula}) yields
\begin{align*}
g^{!}L_{4}(X) &= ((1 + L^{1}(\nu_{Y}) + L^{2}(\nu_{Y}) + \dots) \cap (L_{8}(Y) + L_{4}(Y) + L_{0}(Y)))_{0} \\
&= L_{0}(Y) + L^{1}(\nu_{Y}) \cap L_{4}(Y) + L^{2}(\nu_{Y}) \cap [Y]_{Y}.
\end{align*}
Note that $L_{0}(Y) = \sigma(Y) \cdot [\operatorname{pt}]_{Y} = 0$ since the signature of $Y \cong \mathbb{P}^{1} \times X_{2, 1}$ vanishes.
Next, we compute
$$
L_{4}(Y) \in H_{4}(Y; \mathbb{Q}) = \mathbb{Q}[Y_{1 \times 1}]_{Y} \oplus \mathbb{Q}[Y_{0 \times 2}]_{Y} \oplus \mathbb{Q}[Y_{0 \times (1, 1)}]_{Y},
$$
where $Y_{1 \times 1} := S(X_{1}(F_{\ast}') \times X_{1}(F_{\ast}''))$, $Y_{0 \times 2} := S(X_{0}(F_{\ast}') \times X_{2}(F_{\ast}''))$, and $Y_{0 \times (1, 1)} := S(X_{0}(F_{\ast}') \times X_{1, 1}(F_{\ast}''))$.
The Segre product $S$ restricts to an isomorphism $G_{1}(V') \times G_{2}(V'') \cong S(G_{1}(V') \times G_{2}(V''))$, which in turn restricts to an isomorphism $\varphi \colon X_{1}(F_{\ast}') \times X_{2, 1}(F_{\ast}'') \cong Y$.
Hence, writing $X_{1}' = X_{1}(F_{\ast}')$, $X_{2, 1}'' = X_{2, 1}(F_{\ast}'')$, and $X_{1}'' = X_{1}(F_{\ast}'')$, we have
\begin{align*}
L_{4}(Y) &= \varphi_{\ast}L_{4}(X_{1}' \times X_{2, 1}'') \\
&= \varphi_{\ast}(L_{\ast}(X_{1}') \times L_{\ast}(X_{2, 1}''))_{4} \\
&= \varphi_{\ast}(L_{2}(X_{1}') \times L_{2}(X_{2, 1}'')) \\
&\stackrel{(\ref{l class of x 2 1})}{=} \frac{2}{3} \cdot \varphi_{\ast}([X_{1}']_{X_{1}'} \times [X_{1}'']_{X_{2, 1}''}) \\
&= \frac{2}{3} \cdot \varphi_{\ast}([X_{1}' \times X_{1}'']_{X_{1}' \times X_{2, 1}''}) \\
&= \frac{2}{3} \cdot [S(X_{1}' \times X_{1}'')]_{Y} \\
&=  \frac{2}{3} \cdot [Y_{1 \times 1}]_{Y}.
\end{align*}
Altogether, we obtain
\begin{align*}
\lambda_{2} = \frac{2}{3} \cdot \langle L^{1}(\nu_{Y}), [Y_{1 \times 1}]_{Y}\rangle + \langle L^{2}(\nu_{Y}), [Y]_{Y}\rangle = \frac{2}{3} \cdot \int_{Y_{1 \times 1}} L^{1}(\nu_{M}) + \int_{Y} L^{2}(\nu_{M}), \\
Y_{1 \times 1} = S(X_{1}(F_{\ast}') \times X_{1}(F_{\ast}'')) \subset Y = S(X_{1}(F_{\ast}') \times X_{2, 1}(F_{\ast}'')) \subset M = X_{3, 3, 1}(E_{\ast}).
\end{align*}
The coefficient $\lambda_{1, 1}$ can be computed in a similar way, by taking $M = X_{3, 2, 2}(E_{\ast})$ for a suitable flag $E_{\ast}$ on $\mathbb{C}^{6}$.

\end{document}